\theoremstyle{thmstyleone}%
\newtheorem{theorem}{Theorem}
\theoremstyle{thmstyletwo}%
\newtheorem{example}{Example}%
\newtheorem{remark}{Remark}%
\newtheorem{lemma}{Lemma}%
\theoremstyle{thmstylethree}%
\begin{document}

\title[Article Title]{Optimal error estimates of the stochastic parabolic optimal control problem with integral state constraint}


\author[1]{\fnm{Qiming} \sur{Wang}}\email{202231130040@mail.bnu.edu.cn}
\author[2]{\fnm{Wanfang} \sur{Shen}}\email{wfshen@sdufe.edu.cn}
\author*[3,4]{\fnm{Wenbin} \sur{Liu}}\email{wbliu@uic.edu.cn}


\affil[1]{\orgdiv{School of Mathematical Sciences}, \orgname{Beijing Normal University}, \orgaddress{\street{19 Xinjiekou Outer Street}, \city{Beijing}, \postcode{100875}, \state{Beijing}, \country{China}}}

\affil[2]{\orgdiv{Shandong Key Laboratory of Blockchain Finance}, \orgname{Shandong University of Finance and Economics}, \orgaddress{\street{7366 Erhuan East Road}, \city{Jinan}, \postcode{250014}, \state{Shandong}, \country{China}}}

\affil[3]{\orgdiv{Research Center for Mathematics}, \orgname{Beijing Normal University}, \orgaddress{\street{18 Jinfeng Road}, \city{Zhuhai}, \postcode{519087}, \state{Guangdong}, \country{China}}}

\affil[4]{\orgdiv{Faculty of Business and Management}, \orgname{Beijing Normal University-Hong Kong Baptist University United International College}, \orgaddress{\street{2000 Jindong Road}, \city{Zhuhai}, \postcode{519087}, \state{Guangdong}, \country{China}}}


\abstract{In this paper, the optimal strong error estimates for stochastic parabolic optimal control problem with additive noise and integral state constraint are derived based on time-implicit and finite element discretization.
The continuous and discrete first-order optimality conditions are deduced by constructing the Lagrange functional, which contains forward-backward stochastic parabolic equations and a variational equation. 
The fully discrete version of forward-backward stochastic parabolic equations is introduced as an auxiliary problem and the optimal strong convergence orders in time and space are estimated, which further allows the optimal a priori error estimates for control, state, adjoint state and multiplier to be derived.
Then, a simple and yet efficient gradient projection algorithm is proposed to solve stochastic control problem with state constraint and the convergence rate is proved. 
Numerical experiments are carried out to illustrate the theoretical findings.}

\keywords{Stochastic parabolic optimal control, integral state constraint, forward-backward stochastic parabolic equations, optimal a priori error estimate, gradient projection algorithm}


\pacs[MSC Classification]{49J20, 65M60, 93E20}

\maketitle

\section{Introduction}\label{sec1}
Let $T \in (0, \infty)$ be a fixed time horizon and $(\Omega, \mathcal{F}, \mathbb{F}, \mbox{P})$ be a complete probability space with the natural filtration $\mathbb{F}:=\{\mathcal{F}_t\}_{0\leq t \leq T}$ generated by one-dimensional standard Brownian motion $W_t$. We denote by $\mathcal{D}\subset\mathbb{R}^d (d=1,2,3)$ and $\Delta$ a bounded domain with smooth boundary and the Laplace operator with homogeneous Dirichlet boundary condition in space $L^2(\mathcal{D})$, respectively.
The set of all $L^2(\mathcal{D})$-valued, $\mathbb{F}$-adapted processes is denoted as $L^2_{\mathbb{F}}\left(\Omega;L^2\left(0,T;L^2(\mathcal{D})\right)\right)$, which is abbreviated to $L^2_{\mathbb{F}}(\Omega;0,T;\mathcal{D})$.
The following stochastic optimal control problem (SOCP) with integral state constraint is consider:
\begin{equation}\label{object}
\begin{aligned}
\min\limits_{\substack{ X(t)\in K\\ U(t) \in L^2_{\mathbb{F}}(\Omega;0,T;\mathcal{D}) }}
J(X(t),U(t))=\frac{1}{2}\int_{0}^{T}\mathbb{E}\left[\|X(t)-X_d(t)\|^2+\alpha\|U(t)\|^2\right]dt
\end{aligned}
\end{equation}
subject to
\begin{equation}\label{state}
\left\{\begin{aligned}
dX(t)&=[\Delta X(t)+U(t)]dt+\sigma(t)dW_t,\ t\in(0,T],\\
X(0)&=X_0.
\end{aligned}\right.
\end{equation}
Here $U(t)$ is the control process and $\alpha>0$ is the regularization parameter. Initial value $X_0\in L^2(\mathcal{D})$ and $X_d(t),\sigma(t)\in L_{\mathbb{F}}^2(\Omega;0,T;\mathcal{D})$ are the given desired state and diffusion coefficient.
$X(t)\in K$ is the state process and the constraint set $K$ is given as follows with $\delta\in\mathbb{R}$
\begin{equation}\label{K_constraint}
\begin{aligned}
K:=\left\{X(t)\in L_{\mathbb{F}}^2(\Omega;0,T;\mathcal{D})|\int_{0}^{T}\int_{\mathcal{D}}\mathbb{E}\left[X(t)\right]dxdt\leq\delta\right\}.
\end{aligned}
\end{equation}

Deterministic optimal control problems (OCPs) governed by parabolic equation have been studied so extensively that it is impossible to present a comprehensive review on its development here. 
We refer to \cite{deter1,deter2,deter3,deter4,deter5,deter6,deter7,deter8} and their references. 
In recent years, it has become clear that many phenomena, which are commonly described by deterministic OCPs, may be more fully modeled by SOCPs instead. However, the complexity of the SOCP model is carried over to the solutions themselves, which are no longer
simple functions, but instead stochastic processes and the effective numerical methods play a crucial role.
The most common method for solving OCPs governed by PDEs is the maximization principle method, which converts the directional derivative of the objective functional into a more efficiently computed form called variational inequality by introducing a adjoint state equation. Then the optimality conditional system containing state equation, adjoint state equation and variational inequality is used to solve OCPs.
Analogously, solving SOCP (\ref{object})-(\ref{K_constraint}) via the stochastic maximization principle consists of three main parts, the numerical analysis of state equation (\ref{state}) and the adjoint state equation (\ref{adjoint}), which is a backward stochastic partial differential equation (BSPDE), and the handling of state constraint.
Both strong and weak error estimates of forward stochastic partial differential equation (FSPDE) have been extensively developed. We can refer to \cite{forward1,forward2,forward3,forward4,forward5,forward6,forward7,forward8,forward9} and their references for more details. However, as stated in \cite{finite_tra}, unlike deterministic OCPs and individual FSPDE, numerical scheme should keep the adaptedness of state and control with respect to the filtration in stochastic control system, which restricts the application of many implicit schemes.
Meanwhile, the numerical analysis of the backward stochastic partial differential equation (BSPDE) is relatively difficult, such as the analysis of optimal strong error estimate and the calculation of conditional expectations in numerical simulation, which are also challenges faced in stochastic optimal control problems.
For recent research on BSPDEs, refer to \cite{backward1,backward2,backward3,backward4,backward5}.

Current numerical analyses of stochastic parabolic optimal control problems focus on unconstrained and control constrained problems. In \cite{control2016}, the authors conduct spatial semi-discretization of forward-backward stochastic heat equations and derive the error estimates. They performed the numerical simulations for the forward-backward stochastic partial differential equations (FBSPDEs) and SOCP governed by stochastic heat equation by using the least square Monte Carlo method to approximate the conditional expectations. The space-time full discretization and strong error estimates of FBSPDEs with additive noise as well as the analysis of corresponding SOCP are further complemented in \cite{control2021}. The SOCP with the stochastic parabolic equation driven by linear noise is studied in \cite{control2022,control2024}.
In \cite{control2021Li}, the optimal convergence order of stochastic parabolic optimal control problem with multiplicative noise is obtained based on the semi-discretization of $Z(t)$, that is, a space discretization of only $Z(t)$. The discretization of a Neumann boundary control problem with a stochastic parabolic equation is analyzed in \cite{control2022Li}, where an additive noise occurs in the Neumann boundary condition. For more details on unconstrained or control constrained SOCPs, we can refer to \cite{control2024_1} and its references.

There is a gap in the numerical analysis of state-constrained SOCPs governed by SPDEs and this paper attempts to investigate such problems, where the state constraint is given in the form of integral shown in (\ref{K_constraint}). We first derive the optimality condition system by constructing the Lagrange functional. The optimal strong convergence order of the FBSPDEs is deduced based on the time implicit discretization and piecewise linear finite element discretization in space.
Compared to the unconstrained SOCP with additive noise in \cite{control2021}, the optimal strong convergence order in time is recovered by strengthening the time regularity of functions $\sigma(t)$ and $X_d(t)$.
Then the optimal a prior error estimates of control, state, adjoint state and multiplier are derived, and it's found that the error estimate of multiplier depends on the weak error estimates. Further, an efficient gradient projection algorithm is proposed, which ensures that the state constraint holds by choosing the specific multiplier at each step, and the convergence analysis of the algorithm is given. Finally, numerical examples are performed to verify the theoretical analysis.

The rest of this paper is organized as follows. In Section 2, the continuous first-order optimality condition and spatial semi-discrete SOCP are presented. Optimal spatial semi-discrete error estimates of FBSPDEs are introduced. 
In Section 3, the fully discrete optimality condition system is first given. Subsequently, the optimal strong convergence order of fully discrete FBSPDEs is derived, resulting in the optimal error estimates of the control, state, adjoint state, and multiplier.
In Section 4, an efficient gradient projection algorithm is proposed to solve SOCP and the convergence rate is deduced.
In Section 5, numerical examples are given to verify the theoretical findings. Finally, we close with a few brief remarks in Section 6.

\section{Optimality condition system and space semi-discretization}
In this section, the first-order optimality condition and the regularities of solutions are deduced. Then the SCOP is discretized in space and the spatial semi-discrete error estimates of FBSPDEs are given. We begin by introducing the following notation.

The standard notation $\|\cdot\|_{W^{m,p}(\mathcal{D})}$ is used to denote the norm of Sobolev space $W^{m,p}(\mathcal{D})$. 
In particular, we use $L^{p}(\mathcal{D})$ and $H^{m}(\mathcal{D})$ to denote the cases $m=0$ and $p=2$. We denote by $\|\cdot\|$ and $(\cdot,\cdot)$ the norm and inner product of space $L^{2}(\mathcal{D})$.
The inner product of space $L^2\left(0,T;L^2(\mathcal{D})\right)$ is denoted as $[\cdot,\cdot]$, i.e., $[U(t),V(t)]=\int_{0}^{T}\int_{\mathcal{D}}U(t)V(t)dxdt$. 
The space of all $\mathbb{F}$-adapted processes $X:\Omega\times[0,T]\rightarrow W^{m,p}(\mathcal{D})$ satisfying $\mathbb{E}\left[\int_{0}^{T}\|X(t)\|_{W^{m,p}(D)}^2dt\right]<\infty$ is denoted as $L^2_{\mathbb{F}}\left(\Omega;L^2\left(0,T;W^{m,p}(\mathcal{D})\right)\right)$.
$L^\infty_{\mathbb{F}}\left(0,T;L^2\left(\Omega;W^{m,p}(\mathcal{D})\right)\right)$ and $L^2_{\mathbb{F}}\left(\Omega;C\left(0,T;W^{m,p}(\mathcal{D})\right)\right)$ are the set of all $\mathbb{F}$-adapted processes $X:\Omega\times[0,T]\rightarrow W^{m,p}(\mathcal{D})$ satisfying $\sup_{t\in[0,T]}\mathbb{E}\left[\|X(t)\|^2_{W^{m,p}(\mathcal{D})}\right]<\infty$ and $\mathbb{E}\left[\sup_{t\in[0,T]}\|X(t)\|^2_{W^{m,p}(\mathcal{D})}\right]<\infty$, respectively.

In order to show the dependence of the state variable on the control variable, we use $X(U)$ to denote the state process solved by (\ref{state}) and the reduced SOCP of (\ref{object})-(\ref{K_constraint}) is as follows:
\begin{equation}\label{reduced_object}
\begin{aligned}
\min\limits_{U(t)\in \mathbb{U}_{\delta}}
\mathcal{J}(U(t))=\frac{1}{2}\int_{0}^{T}\mathbb{E}\left[\|X(U)-X_d(t)\|^2+\alpha\|U(t)\|^2\right]dt.
\end{aligned}
\end{equation}
Here $\mathbb{U}_{\delta}$ is the control domain given by
\begin{equation}\label{reduced_domain}
\begin{aligned}
\mathbb{U}_{\delta}:=\left\{U(t)\in L^2_{\mathbb{F}}(\Omega;0,T;\mathcal{D})|G(U(t))\leq 0\right\},
\end{aligned}
\end{equation}
where $G(U(t))=\int_{0}^{T}\int_{\mathcal{D}}\mathbb{E}\left[X(U)\right]dxdt-\delta$. 
According to \cite{control2024_1}, there exists a unique solution $X(t)\in L^2_{\mathbb{F}}\left(\Omega;C\left(0,T;L^2(\mathcal{D})\right)\cap L^2\left(0,T;H_{0}^{1}(\mathcal{D})\right)\right)$ for the state equation (\ref{state}), which implies that SOCP (\ref{object})-(\ref{K_constraint}) is well defined if the control domain $\mathbb{U}_{\delta}$ is non-empty. 
Then by introducing Lagrangian functional, the following optimality condition system is derived.
\begin{theorem}\label{lianxu_first_condition}
Suppose that the Slater's condition holds, i.e., there exists a $\tilde{U}(t)\in L^2_{\mathbb{F}}(\Omega;0,T;\mathcal{D})$ such that $G(\tilde{U}(t))<0$.
Let $(X(t),U(t))$ be the solution of stochastic optimal control problem (\ref{reduced_object})-(\ref{reduced_domain}), then there exists a real number $\mu\geq0$ and a pair of process $(Y(t),Z(t))$ such that $(X(t),Y(t),Z(t),\mu,U(t))$ satisfies the following optimality condition system:
\begin{equation}\label{first_continuous}
\left\{\begin{aligned}
&dX(t)=[\Delta X(t)+U(t)]dt+\sigma(t)dW_t,\ t\in(0,T],\\
&X(0)=X_0,\\
&dY(t)=\left[-\Delta Y(t)-\left(X(t)-X_d(t)+\mu\right)\right]dt+Z(t)dW_t,\ t\in[0,T),\\
&Y(T)=0,\\
&\mathbb{E}\left[\left[\mu,W(t)-X(t)\right]\right]\leq0,\ \forall W(t)\in K,\\
&Y(t)+\alpha U(t)=0.
\end{aligned}\right.
\end{equation}
\end{theorem}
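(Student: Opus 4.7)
The plan is to derive the optimality system by combining a Lagrange multiplier argument for the scalar constraint $G(U)\leq 0$ with an It\^o duality step that introduces the backward adjoint. Since the state equation is linear with additive noise, $X(U)$ is affine in $U$, so $G$ is an affine continuous functional on $L^2_{\mathbb{F}}(\Omega;0,T;\mathcal{D})$ and $\mathcal{J}$ is strictly convex and coercive; the reduced problem (\ref{reduced_object})--(\ref{reduced_domain}) is thus a convex program with a single affine inequality constraint. I would form the Lagrangian $\mathcal{L}(V,\mu)=\mathcal{J}(V)+\mu G(V)$ with $\mu\geq 0$ and invoke a standard KKT theorem under Slater's condition (\textit{e.g.} Bonnans--Shapiro or Zowe--Kurcyusz): there exists $\mu\geq 0$ such that $U$ is an unconstrained minimizer of $\mathcal{L}(\cdot,\mu)$ and $\mu G(U)=0$. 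Complementary slackness, together with $G(W)\leq 0$ for every $W\in K$, then yields
\begin{equation*}
\mathbb{E}\bigl[[\mu,\,W-X]\bigr]=\mu\int_0^T\!\!\int_{\mathcal{D}}\mathbb{E}[W-X]\,dx\,dt=\mu G(W)-\mu G(U)\leq 0,
\end{equation*}
which is the variational inequality in (\ref{first_continuous}).

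Next, I would convert the stationarity condition $D_U\mathcal{L}(U,\mu)\cdot V=0$ into the pointwise relation $Y+\alpha U=0$. Let $X'(U)V$ denote the linearized state, which solves $d(X'V)=[\Delta(X'V)+V]\,dt$ with zero initial data and no noise term (the noise is additive). The G\^ateaux derivative of $\mathcal{L}$ reads
\begin{equation*}
\int_0^T\mathbb{E}\bigl[(X-X_d,\,X'(U)V)+\alpha(U,V)\bigr]dt+\mu\int_0^T\!\!\int_{\mathcal{D}}\mathbb{E}[X'(U)V]\,dx\,dt=0.
\end{equation*}
Introducing the adjoint BSPDE stated in (\ref{first_continuous}) and applying It\^o's formula to $(Y(t),X'(U)V(t))$ on $[0,T]$, with $Y(T)=0$, $X'V(0)=0$, and the integration by parts $(\Delta Y,X'V)=(Y,\Delta(X'V))$ coming from homogeneous Dirichlet conditions, produces the duality identity $\int_0^T\mathbb{E}\bigl[(X-X_d+\mu,\,X'(U)V)\bigr]dt=\int_0^T\mathbb{E}[(Y,V)]\,dt$. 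Substituting back yields $\int_0^T\mathbb{E}[(Y+\alpha U,V)]\,dt=0$ for every admissible perturbation $V$, whence $Y+\alpha U=0$ in $L^2_{\mathbb{F}}(\Omega;0,T;\mathcal{D})$.

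The main obstacle lies in the KKT step in this infinite-dimensional stochastic setting: one must verify that $\mathcal{J}$ and $G$ are continuously G\^ateaux differentiable on $L^2_{\mathbb{F}}(\Omega;0,T;\mathcal{D})$, that the Slater point provides the Robinson-type regularity guaranteeing a non-trivial multiplier, and that the dual variable associated with the scalar constraint is indeed a real number rather than a measure-valued object on $\Omega\times(0,T)\times\mathcal{D}$. Once KKT is in place, the It\^o duality step is routine, given the BSPDE well-posedness that yields $Y\in L^2_{\mathbb{F}}(\Omega;C(0,T;L^2(\mathcal{D}))\cap L^2(0,T;H^1_0(\mathcal{D})))$ and $Z\in L^2_{\mathbb{F}}(\Omega;0,T;\mathcal{D})$, which is sufficient regularity to legitimize the differentiation, the integration by parts, and the vanishing of the stochastic integral in expectation.
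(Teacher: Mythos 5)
Your proposal is correct and follows essentially the same route as the paper: a KKT/Lagrange multiplier theorem under Slater's condition (the paper cites Theorem 1.56 of Hinze et al.) yields $\mu\geq 0$ with $\mu G(U)=0$, complementary slackness gives the variational inequality, and It\^o duality between the noiseless linearized state equation and the adjoint BSPDE converts stationarity of the Lagrangian into $Y+\alpha U=0$. The verification issues you flag (G\^ateaux differentiability, the scalar nature of the multiplier for a single affine constraint) are exactly what the cited KKT theorem handles, so there is no gap.
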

\begin{proof}
From the Theorem 1.56 in \cite{deter2}, there exist a non-negative constant $\mu$ such that
\begin{equation*}
\begin{aligned}
\mu G(U(t))=0,\quad \mathcal{L}_{U}^{\prime}(U(t),\mu)(V(t)-U(t))=0,\ \forall V(t)\in L^2_{\mathbb{F}}(\Omega;0,T;\mathcal{D}),
\end{aligned}
\end{equation*}
where $\mathcal{L}(U(t),\mu):=\mathcal{J}(U(t))+\mu G(U(t))$ denotes the Lagrange functional with $\mu$ being the Lagrange multiplier and $\mathcal{L}_{U}^{\prime}(U(t),\mu)(V(t))$ is the directional derivative with respect to $U(t)$ in direction $V(t)$.

Note that for any $W(t)\in K$, it holds that
\begin{equation*}
\begin{aligned}
0=\mu G(U(t))=\mathbb{E}\left[[\mu,X(U)]\right]-\mu\delta=\mathbb{E}\left[[\mu,X(U)-W(t)]\right]+\mathbb{E}\left[[\mu,W(t)]\right]-\mu\delta,
\end{aligned}
\end{equation*}
which implies that
\begin{equation}\label{first_continuous_third}
\begin{aligned}
\mathbb{E}\left[[\mu,W(t)-X(U)]\right]=\mathbb{E}\left[[\mu,W(t)]\right]-\mu\delta\leq0.
\end{aligned}
\end{equation}
By the definitions of $\mathcal{J}(U(t))$ and $G(U(t))$, we arrive at
\begin{equation}\label{derivative_L}
\begin{aligned}
&\mathcal{L}_{U}^{\prime}(U(t),\mu)(V(t)-U(t))\\
&=\lim_{\kappa\rightarrow 0}\frac{\mathcal{J}\left(U(t)+\kappa(V(t)-U(t))\right)-\mathcal{J}(U(t))}{\kappa}\\
&\quad+\mu\lim_{\kappa\rightarrow 0}\frac{G\left(U(t)+\kappa(V(t)-U(t))\right)-G(U(t))}{\kappa}\\
&=\mathbb{E}\left[\left[X(U)-X_d(t),X^{\prime}(U)(V(t)-U(t)) \right] +\alpha\left[U(t),V(t)-U(t) \right]\right]\\
&\quad+\mathbb{E}\left[\left[\mu,X^{\prime}(U)(V(t)-U(t))\right]\right].
\end{aligned}
\end{equation}
Let $Q(t):=X^{\prime}(U)(V(t)-U(t))$ and it satisfies the following equation from the state equation (\ref{state}):
\begin{equation*}
\left\{\begin{aligned}
&dQ(t)=\left[\Delta Q(t)+V(t)-U(t)\right]dt,\ t\in(0,T],\\
&Q(0)=0.
\end{aligned}\right.
\end{equation*}
In order to rid $Q(t)$ in (\ref{derivative_L}), the following BSPDE is introduced:
\begin{equation}\label{adjoint}
\left\{\begin{aligned}
&dY(t)=\left[-\Delta Y(t)-(X(t)-X_d(t)+\mu)\right]dt+Z(t)dW_t,\ t\in[0,T),\\
&Y(T)=0.
\end{aligned}\right.
\end{equation}
By It\^{o} formula for the processes $Q(t)$ and $Y(t)$, we deduce
\begin{equation*}
\begin{aligned}
0=\mathbb{E}\left[\int_{\mathcal{D}}\int_{0}^{T}Q(t)dY(t)dx\right]+\mathbb{E}\left[\int_{\mathcal{D}}\int_{0}^{T}Y(t)dQ(t)dx\right],
\end{aligned}
\end{equation*}
which implies that
\begin{equation}\label{ito_formul}
\begin{aligned}
\mathbb{E}\left[\left[X(U)-X_d(t)+\mu,Q(t)\right]\right]
=\mathbb{E}\left[\left[Y(t),V(t)-U(t)\right]\right].
\end{aligned}
\end{equation}
For any $V(t)\in L^2_{\mathbb{F}}(\Omega;0,T;\mathcal{D})$, combining (\ref{derivative_L}) and (\ref{ito_formul}) yields 
\begin{equation*}
\begin{aligned}
\mathcal{L}_{U}^{\prime}(U(t),\mu)(V(t)-U(t))=\mathbb{E}\left[\left[Y(t)+\alpha U(t),V(t)-U(t)   \right]\right]=0,
\end{aligned}
\end{equation*}
which implies that $Y(t)+\alpha U(t)=0$. The first-order optimality condition is obtained by (\ref{state}), (\ref{first_continuous_third}), (\ref{adjoint}) and the fact that $Y(t)+\alpha U(t)=0$.
\end{proof}
\begin{remark}
From the fact that $\mu G(U(t))=0$, we can further derive that
\begin{equation}\label{mu_property}
\mu=\left\{\begin{aligned}
&\mathrm{constant}\geq 0,\ &&\mathrm{if}\ \mathbb{E}\left[[1,X(t)]\right]=\delta,\\
&0,\ &&\mathrm{if}\ \mathbb{E}\left[[1,X(t)]\right]<\delta.
\end{aligned}\right.
\end{equation}
\end{remark}
Based on the optimality condition system (\ref{first_continuous}), we have the following regularity results of the solutions to SOCPs.
\begin{lemma}\label{regularity}
Assume that $X_0\in H_0^1(\mathcal{D})\cap H^2(\mathcal{D})$, $X_d(t)\in L_{\mathbb{F}}^2\left(\Omega;L^2(0,T;H_0^1(\mathcal{D}))\right)$ and $\sigma(t)\in L_{\mathbb{F}}^2\left(\Omega;L^2(0,T;H_0^1(\mathcal{D})\cap H^2(\mathcal{D}))\right)$, the solutions solved by optimality condition system (\ref{first_continuous}) satisfy
\begin{equation*}
\begin{aligned}
U(t),X(t),Y(t)\in L^2_{\mathbb{F}}\left(\Omega;C\left(0,T;H_0^1(\mathcal{D})\cap H^2(\mathcal{D})\right)\cap L^2\left(0,T;H_{0}^{1}(\mathcal{D})\cap H^{3}(\mathcal{D})\right)\right)
\end{aligned}
\end{equation*}
and $Z(t)\in L^2_{\mathbb{F}}\left(\Omega;L^2\left(0,T;H_{0}^{1}(\mathcal{D})\cap H^{2}(\mathcal{D}) \right)\right)$.
\end{lemma}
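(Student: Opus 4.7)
The plan is to treat the system (\ref{first_continuous}) as a coupled linear FBSPDE via the pointwise relation $U(t)=-Y(t)/\alpha$, and then to bootstrap the spatial regularity of $(X,Y,Z)$ by applying It\^o's formula in successively stronger Sobolev norms. The baseline well-posedness $X,Y\in L^2_{\mathbb{F}}(\Omega;C(0,T;L^2(\mathcal{D}))\cap L^2(0,T;H_0^1(\mathcal{D})))$ with $Z\in L^2_{\mathbb{F}}(\Omega;L^2(0,T;L^2(\mathcal{D})))$ is available from the linear FBSPDE theory cited in \cite{control2024_1}, so the real task is to iterate this up to $H_0^1\cap H^2$ in the pathwise $C$-norm and $H_0^1\cap H^3$ in $L^2$-in-time.

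The first bootstrap step tests the forward SPDE for $X$ with $-\Delta X$, equivalently applying It\^o's formula to $\|\nabla X(t)\|^2$, using the known data $X_0\in H_0^1$, $\sigma\in L^2_{\mathbb{F}}(0,T;H_0^1)$, and the $L^2$-regularity of $U$ inherited from the previous level. A standard Burkholder--Davis--Gundy plus Gronwall argument then produces $X\in L^2_{\mathbb{F}}(\Omega;C(0,T;H_0^1)\cap L^2(0,T;H_0^1\cap H^2))$. The analogous reverse-time estimate is carried out on the BSPDE for $(Y,Z)$ with source $X(t)-X_d(t)+\mu$; since $X$ and $X_d$ are both in $L^2$ of space and $\mu$ is a constant (hence in $L^2(\mathcal{D})$), this yields $Y$ in the same class and $Z\in L^2_{\mathbb{F}}(\Omega;L^2(0,T;H_0^1))$.

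A second pass tests each equation with $\Delta^2 X$ and $\Delta^2 Y$ respectively---equivalently, studies the SPDE/BSPDE satisfied by $\Delta X$ and $\Delta Y$---using the strengthened data $X_0\in H_0^1\cap H^2$, $\sigma\in L^2(0,T;H_0^1\cap H^2)$, and $X_d\in L^2(0,T;H_0^1)$. The source $U=-Y/\alpha$ in the forward equation now has the $H_0^1$-regularity obtained in the first pass, which is exactly what is needed to close the estimate. Collecting the estimates yields $X,Y\in L^2_{\mathbb{F}}(\Omega;C(0,T;H_0^1\cap H^2)\cap L^2(0,T;H_0^1\cap H^3))$ and $Z\in L^2_{\mathbb{F}}(\Omega;L^2(0,T;H_0^1\cap H^2))$; the claimed regularity of $U$ then follows directly from $U=-Y/\alpha$.

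The main obstacle is the two-way coupling between $X$ and $Y$, which forces each bootstrap level to alternate between the forward and backward equations and demands careful bookkeeping of which datum feeds into which estimate at which stage. A secondary point is that the multiplier $\mu$ does not lie in $H_0^1(\mathcal{D})$, but this is harmless because the parabolic energy estimates only require the source to be in $H^k(\mathcal{D})$, a space to which constants belong automatically on the bounded domain $\mathcal{D}$. Rigorous justification of It\^o's formula in the higher Sobolev norms is, as usual, handled by a Galerkin/spectral approximation based on the Dirichlet eigenbasis of $-\Delta$, mirroring the argument in \cite{control2021}.
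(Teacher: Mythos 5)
Your proposal is correct and follows essentially the same route as the paper: a bootstrap through the coupling $U(t)=-Y(t)/\alpha$, first obtaining $X$ at the level $C(0,T;H_0^1)\cap L^2(0,T;H^2)$, then upgrading $(Y,Z)$ via the backward equation with source $X-X_d+\mu$, and feeding the improved control back into the forward equation. The only difference is presentational: the paper cites known regularity results for the individual forward and backward equations where you sketch the It\^o/energy estimates explicitly, and it gains the two derivatives for $Y$ in a single step rather than your two passes, but the logical structure is identical.
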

\begin{proof}
Based on the regularity assumptions of $X_0$, $\sigma(t)$ and $U(t)\in L_{\mathbb{F}}^{2}(\Omega;0,T;\mathcal{D})$, from \cite{control2021Li,control2024_1} it follows that $X(t)\in L^2_{\mathbb{F}}\left(\Omega;C\left(0,T;H_0^1(\mathcal{D})\right)\cap L^2\left(0,T;H_{0}^{1}(\mathcal{D})\cap H^{2}(\mathcal{D})\right)\right)$. Combining the regularities of $X(t)$ and $X_d(t)$ gives
$Y(t)\in L^2_{\mathbb{F}}\left(\Omega;C\left(0,T;H_0^1(\mathcal{D})\cap H^2(\mathcal{D})\right)\cap L^2\left(0,T;H_{0}^{1}(\mathcal{D})\cap H^{3}(\mathcal{D})\right)\right)$ and $Z(t)\in L^2_{\mathbb{F}}\left(\Omega;L^2\left(0,T;H_{0}^{1}(\mathcal{D})\cap H^{2}(\mathcal{D})\right)\right)$.
Using the fact $U(t)=-\alpha Y(t)$ gives the regularity of control, which further leads to the regularity of state variable.
\end{proof}

Next the space semi-discretization for SOCP (\ref{reduced_object})-(\ref{reduced_domain}) is performed.
Let $\mathcal{T}_h=\{E\}$ be a family of decompositions of the bounded domain $\mathcal{D}$ into regular triangles and the maximum mesh size is defined as $h:=\max\{\mathrm{diam}(E): E\in\mathcal{T}_h\}$. For a positive integer $N$, a uniform time partition $\Pi=\{t_0,...,t_N\}$ over $[0,T]$ is introduced:
$$0=t_0<t_1<...<t_N=T,\ t_{n+1}-t_n=T/N=\tau\leq 1.$$
We denote by $\mathbb{P}_1(E)$ the space of polynomials of degree $\leq 1$ on element $E$ and 
the finite element space consisting of continuous piecewise linear functions over the triangulation $\mathcal{T}_h$ is given by
\begin{equation*}
\begin{aligned}
\mathbb{V}_h:=\{\phi\in C(\overline{\mathcal{D}}): \phi|_{E}\in\mathbb{P}_1(E),\ \forall E\in\mathcal{T}_h\}.
\end{aligned}
\end{equation*}
Let $\mathbb{V}_{h}^{1}:=\mathbb{V}_h\cap H_{0}^{1}(\mathcal{D})$ and the $L^2$-projection $\Pi_{h}^{1}:L^2(\mathcal{D})\rightarrow\mathbb{V}_h^1$ is defined by $(\Pi_{h}^{1}\xi-\xi,\phi_h)=0$ for all $\phi_h\in\mathbb{V}_{h}^{1}$, which implies that $\|\Pi_{h}^{1}\xi\|\leq\|\xi\|$.
The discrete Laplace $\Delta_h:\mathbb{V}_{h}^{1}\rightarrow\mathbb{V}_{h}^{1}$ is defined as $(-\Delta_h \xi_h,\phi_h)=(\nabla\xi_h,\nabla\phi_h)$ for all $\xi_h,\phi_h\in\mathbb{V}_{h}^{1}$. We denote by $\mathcal{R}_h:H_{0}^{1}(\mathcal{D})\rightarrow\mathbb{V}_{h}^{1}$ the Ritz-projection, which satisfies that $(\nabla\mathcal{R}_h\xi,\nabla\phi_h)=(\nabla\xi,\nabla\phi_h)$ for all $\phi_h\in\mathbb{V}_{h}^{1}$.
Throughout the paper, $C$ is a positive constant, not necessarily the same at each occurrence, but it's independent of the discretization parameters $h$ and $N$. The projection estimates and inverse inequality are given as follows.
\begin{lemma}(\cite{touying_error,control2024_1,inverse})\label{pro_estimate}
For the projections $\Pi_{h}^{1}$, $\mathcal{R}_h$ and functions $\phi_1\in H^2(\mathcal{D})$, $\phi_2\in H_{0}^{1}(\mathcal{D})\cap H^2(\mathcal{D})$, there exists a positive constant $C$ such that
\begin{equation*}
\begin{aligned}
&\|\phi_1-\Pi_h^1\phi_1\|+\|\phi_2-\mathcal{R}_h\phi_2\|\leq Ch(\|\phi_1\|_{H^1(\mathcal{D})}+\|\phi_2\|_{H^1(\mathcal{D})}).\\
&\|\phi_1-\Pi_h^1\phi_1\|+\|\phi_2-\mathcal{R}_h\phi_2\|\leq Ch^2(\|\phi_1\|_{H^2(\mathcal{D})}+\|\phi_2\|_{H^2(\mathcal{D})}),\\
&\|\phi_1-\Pi_{h}^{1}\phi_1\|_{H^1(\mathcal{D})}+\|\phi_2-\mathcal{R}_h\phi_2\|_{H^1(\mathcal{D})}\leq Ch(\|\phi_1\|_{H^2(\mathcal{D})}+\|\phi_2\|_{H^2(\mathcal{D})}).
\end{aligned}
\end{equation*}
\end{lemma}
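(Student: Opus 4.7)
The plan is to establish each of the three inequalities by applying standard finite element approximation theory to the $L^2$-projection $\Pi_h^1$ and the Ritz projection $\mathcal{R}_h$ separately, then sum. Since the lemma is cited from \cite{touying_error,control2024_1,inverse}, the argument is classical; I would structure it around the three displayed bounds rather than around the two operators, as the techniques used in each case are parallel.

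For the $H^1$-estimate (the third bound) on $\mathcal{R}_h$, I would start from the Galerkin orthogonality $(\nabla(\phi_2-\mathcal{R}_h\phi_2),\nabla\chi_h)=0$ for all $\chi_h\in\mathbb{V}_h^1$, which yields Cea-type best approximation in the $H^1$ seminorm, and close with the standard nodal interpolation error estimate $\|\phi_2-I_h\phi_2\|_{H^1(\mathcal{D})}\leq Ch\|\phi_2\|_{H^2(\mathcal{D})}$ for continuous piecewise linears. For $\Pi_h^1$, the $H^1$-stability on quasi-uniform triangulations (obtained via the inverse inequality on $\mathbb{V}_h^1$) reduces the $H^1$ error to the corresponding best approximation, which is then bounded by the same interpolation estimate.

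For the $L^2$-estimate with $H^2$-regularity (the second bound) on $\mathcal{R}_h$, I would invoke the Aubin--Nitsche duality trick: solve $-\Delta w=\phi_2-\mathcal{R}_h\phi_2$ with homogeneous Dirichlet data, use elliptic $H^2$-regularity together with Galerkin orthogonality to pair the error with $\nabla(w-I_h w)$, and then close with the $H^1$ interpolation estimate for $w$ and the already-established $H^1$ error bound for $\mathcal{R}_h\phi_2$. For $\Pi_h^1$, $L^2$ best approximation combined with $\|\phi_1-I_h\phi_1\|\leq Ch^2\|\phi_1\|_{H^2(\mathcal{D})}$ gives the bound directly. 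The first inequality, with $H^1$-regularity on the right, drops by one order of interpolation accuracy and follows from the same best-approximation arguments using $\|\phi-I_h\phi\|\leq Ch\|\phi\|_{H^1(\mathcal{D})}$ (a Cl\'ement- or Scott--Zhang-type quasi-interpolant is needed here since pointwise values of an $H^1$-function are not defined when $d\geq 2$).

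The main technical obstacle I anticipate concerns handling the $L^2$-projection onto $\mathbb{V}_h^1\subset H_0^1(\mathcal{D})$ when $\phi_1\in H^2(\mathcal{D})$ does not vanish on $\partial\mathcal{D}$, since then the nodal interpolant need not lie in $\mathbb{V}_h^1$ and a boundary-layer contribution could in principle spoil the rate. In practice this is resolved by using a Scott--Zhang-type quasi-interpolant that preserves homogeneous boundary traces, or by appealing directly to the precise statements in the cited references. Given that \cite{touying_error,control2024_1,inverse} already supply each estimate in exactly the form needed, I would cite those results rather than reprove these classical bounds from scratch.
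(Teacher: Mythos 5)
The paper offers no proof of this lemma at all---it is stated purely as a citation to \cite{touying_error,control2024_1,inverse}---so your decision to ultimately defer to those references matches the paper exactly, and the classical sketch you supply (Galerkin orthogonality plus interpolation for the $H^1$ bound, Aubin--Nitsche duality for the second-order $L^2$ bound, best approximation for $\Pi_h^1$) is the standard and correct route. The one caveat concerns the boundary issue you yourself flag: since the paper defines $\Pi_h^1$ as mapping onto $\mathbb{V}_h^1\subset H_0^1(\mathcal{D})$, the $O(h^2)$ rate genuinely fails for $\phi_1\in H^2(\mathcal{D})$ with nonzero trace (the $L^2$ distance from such a $\phi_1$ to the subspace of functions vanishing on $\partial\mathcal{D}$ is only $O(h^{1/2})$, so a Scott--Zhang interpolant cannot repair this---the obstruction lies in the approximation space, not in the choice of interpolant), and the lemma must therefore implicitly be read with $\phi_1$ vanishing on $\partial\mathcal{D}$ or with $\Pi_h^1$ understood as the projection onto the full space $\mathbb{V}_h$.
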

\begin{lemma}(\cite{backward5,touying_error,inverse})\label{inver_ertimate}
For the regular triangulation $\mathcal{T}_h$, there exists a positive constant $C$
such that for any $\phi_h\in\mathbb{V}_{h}^{1}$
 \begin{equation*}
\begin{aligned}
\|\phi_h\|_{H^{1}(\mathcal{D})}^2\leq Ch^{-2}\|\phi_h\|^2.
\end{aligned}
\end{equation*}
\end{lemma}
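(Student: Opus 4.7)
My plan is to prove this global inverse inequality by the classical scaling argument of finite element theory, which reduces the estimate on each triangle $E\in\mathcal{T}_h$ to a fixed statement on a reference element. First I would fix a reference triangle $\hat{E}$ and, for each $E\in\mathcal{T}_h$, introduce the affine diffeomorphism $F_E(\hat{x})=B_E\hat{x}+b_E$ mapping $\hat{E}$ onto $E$. For $\phi_h\in\mathbb{V}_h^1$, the pullback $\hat{\phi}(\hat{x}):=\phi_h(F_E(\hat{x}))$ belongs to the finite-dimensional space $\mathbb{P}_1(\hat{E})$, on which all norms are equivalent; in particular there is a constant $\hat{C}$ depending only on $\hat{E}$ such that $|\hat{\phi}|_{H^1(\hat{E})}\leq \hat{C}\,\|\hat{\phi}\|_{L^2(\hat{E})}$.

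Next, I would transport this reference estimate back to the physical element via the standard change-of-variables formulas. Under the shape regularity of $\mathcal{T}_h$ one has $\|B_E\|\leq C h_E$, $\|B_E^{-1}\|\leq C h_E^{-1}$, and $|\det B_E|$ is comparable to $h_E^d$, where $h_E:=\mathrm{diam}(E)$. Combining these bounds with the reference inequality yields the elementwise estimate $|\phi_h|_{H^1(E)}^2\leq C h_E^{-2}\|\phi_h\|_{L^2(E)}^2$; adding $\|\phi_h\|_{L^2(E)}^2\leq C h_E^{-2}\|\phi_h\|_{L^2(E)}^2$ (valid once $h_E\leq 1$, which we may assume after an initial refinement) promotes the left-hand side to the full $H^1(E)$-norm.

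Finally I would assemble. Using the quasi-uniformity already encoded by the single mesh parameter $h=\max_E \mathrm{diam}(E)$, under which $h_E^{-1}\leq C h^{-1}$ uniformly in $E$, summing over $E\in\mathcal{T}_h$ gives
\begin{equation*}
\|\phi_h\|_{H^1(\mathcal{D})}^2=\sum_{E\in\mathcal{T}_h}\|\phi_h\|_{H^1(E)}^2\leq C h^{-2}\sum_{E\in\mathcal{T}_h}\|\phi_h\|_{L^2(E)}^2=Ch^{-2}\|\phi_h\|^2,
\end{equation*}
which is the claimed bound.

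The only genuinely delicate point is the middle step: tracking the affine scaling so that the constants depend solely on the shape-regularity constant of the family $\{\mathcal{T}_h\}$ and not on the individual element. Once that bookkeeping is pinned down, the rest amounts to equivalence of norms on a finite-dimensional space together with elementwise summation. Because shape regularity is taken for granted whenever one speaks of a regular family of triangulations, this bound is a standard ``cite-and-go'' ingredient, which is presumably why the authors import it directly from \cite{backward5,touying_error,inverse}.
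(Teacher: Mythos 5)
The paper does not prove this lemma at all --- it is imported verbatim from \cite{backward5,touying_error,inverse} --- so there is no in-paper argument to compare against. Your scaling proof (pull back to a reference triangle, use equivalence of norms on the finite-dimensional space $\mathbb{P}_1(\hat{E})$, transport back with the bounds $\|B_E\|\leq Ch_E$, $\|B_E^{-1}\|\leq Ch_E^{-1}$, $|\det B_E|\sim h_E^d$, then sum over elements) is exactly the standard proof and is correct in structure.

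One point of wording deserves a correction: quasi-uniformity is \emph{not} ``already encoded by the single mesh parameter $h=\max_E\mathrm{diam}(E)$.'' With $h$ defined as the maximum diameter one only gets $h_E\leq h$, i.e.\ $h_E^{-1}\geq h^{-1}$, which points the wrong way. The global bound $h_E^{-1}\leq Ch^{-1}$ requires the additional hypothesis $\min_E h_E\geq ch$, i.e.\ quasi-uniformity of the family $\{\mathcal{T}_h\}$, which is a genuine extra assumption beyond shape regularity and beyond anything in the definition of $h$. The paper's phrase ``regular triangulation'' is evidently meant to include this (and the cited references assume it), but your proof should state it as a hypothesis rather than claim it comes for free. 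With that hypothesis made explicit, the argument is complete; without it, the elementwise estimate $|\phi_h|_{H^1(E)}^2\leq Ch_E^{-2}\|\phi_h\|_{L^2(E)}^2$ still holds but cannot be summed into the stated global form.
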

Then the spatial semi-discrete SOCP is obtained as following:
\begin{equation*}\label{semi_object}
\begin{aligned}
\min\limits_{\substack{ X_h(t)\in K_h\\ U_h(t) \in L^2_{\mathbb{F}}\left(\Omega;L^2(0,T;\mathbb{V}_{h}^{1})\right) }}
J(X_h(t),U_h(t))=\frac{1}{2}\int_{0}^{T}\mathbb{E}\left[\|X_h(t)-X_d(t)\|^2+\alpha\|U_h(t)\|^2\right]dt
\end{aligned}
\end{equation*}
subject to
\begin{equation*}\label{semi_state}
\left\{\begin{aligned}
dX_h(t)&=[\Delta_h X_h(t)+U_h(t)]dt+\Pi_{h}^{1}\sigma(t)dW_t,\ t\in(0,T],\\
X_h(0)&=\Pi_{h}^{1}X_0.
\end{aligned}\right.
\end{equation*}
Here the constraint set $K_h$ is given by
\begin{equation*}
\begin{aligned}
K_h:=\left\{V_h(t)\in L^2_{\mathbb{F}}\left(\Omega;L^2(0,T;\mathbb{V}_{h}^{1})\right)|\int_{0}^{T}\int_{\mathcal{D}}\mathbb{E}\left[V_h(t)\right]dxdt\leq\delta\right\}.
\end{aligned}
\end{equation*}
Performing the analysis like Theorem \ref{lianxu_first_condition}, the following spatial semi-discrete first-order optimal condition is obtained:
\begin{equation*}\label{semi_first_continuous}
\left\{\begin{aligned}
&dX_h(t)=[\Delta_h X_h(t)+U_h(t)]dt+\Pi_{h}^{1}\sigma(t)dW_t,\ t\in(0,T],\\
&X_h(0)=\Pi_{h}^{1}X_0,\\
&dY_h(t)=\left[-\Delta_h Y_h(t)-\Pi_{h}^{1}(X_h(t)-X_d(t)+\mu_h)\right]dt+Z_h(t)dW_t,\ t\in[0,T),\\
&Y_h(T)=0,\\
&\mathbb{E}\left[\left[\mu_h,W_h(t)-X_h(t)\right]\right]\leq0,\ \forall W_h(t)\in K_{h},\\
&Y_h(t)+\alpha U_h=0.
\end{aligned}\right.
\end{equation*}
In order to give the optimal error estimates of fully discretized FBSPDEs in the next section, the following auxiliary problem is introduced:
\begin{equation}\label{semifu_first_continuous}
\left\{\begin{aligned}
&dX_h^U(t)=\left[\Delta_h X_h^U(t)+\Pi_{h}^{1}U(t)\right]dt+\Pi_{h}^{1}\sigma(t)dW_t,\ t\in(0,T],\\
&X_h^U(0)=\Pi_{h}^{1}X_0,\\
&dY_h^U(t)=\left[-\Delta_h Y_h^U(t)-\Pi_{h}^{1}\left(X(t)-X_d(t)+\mu\right)\right]dt
+Z_h^U(t)dW_t,\ t\in[0,T),\\
&Y_h^U(T)=0.
\end{aligned}\right.
\end{equation}
It is obvious that (\ref{semifu_first_continuous}) is the finite element approximation of FBSPDEs in continuous optimality condition system (\ref{first_continuous}). According to Theorems 3.2 and 3.6 in \cite{control2024_1},
the following optimal semi-discrete error estimates are deduced.
\begin{lemma}\label{optimal_state}
Under the regularities in Lemma \ref{regularity} and we assume that 
$X_d(t)\in L_{\mathbb{F}}^2\left(\Omega;L^2\left(0,T;H_0^1(\mathcal{D})\cap H^2(\mathcal{D})\right)\right)$, let $(X(t),Y(t),Z(t))$ and $\left(X_h^U(t),Y_h^U(t),Z_h^U(t)\right)$ be the solutions of FBSPDEs in (\ref{first_continuous}) and (\ref{semifu_first_continuous}), respectively, then the following estimates hold:
\begin{equation*}\label{optimal_fspde_semi}
\begin{aligned}
&\mathbb{E}\left[\sup_{t\in[0,T]}\|X(t)-X_h^U(t)\|^2\right]+\mathbb{E}\left[\int_{0}^{T}h^2\|\nabla X(t)-\nabla X_{h}^U(t)\|^2dt\right]\leq Ch^4,\\
&\sup_{t\in[0,T]}\mathbb{E}\left[\|Y(t)-Y_h^U(t)\|^2\right]\leq Ch^4,\\
&\mathbb{E}\left[\sup_{t\in[0,T]}\|Y(t)-Y_h^U(t)\|^2\right]+\mathbb{E}\left[\int_{0}^{T}\|\nabla Y(t)-\nabla Y_{h}^U(t)\|^2+\|Z(t)-Z_{h}^U(t)\|^2dt\right]\leq Ch^2.
\end{aligned}
\end{equation*}
\end{lemma}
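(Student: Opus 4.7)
The plan is to invoke the abstract forward--backward semi-discrete error machinery developed in Theorems~3.2 and~3.6 of \cite{control2024_1} and specialize it to the auxiliary system \eqref{semifu_first_continuous}. That system is precisely the $L^2$-projected finite element approximation of the continuous FBSPDEs in \eqref{first_continuous}, and its data satisfy the regularity hypotheses required by those theorems thanks to Lemma~\ref{regularity}: $X,Y\in L^2_\mathbb{F}(\Omega;C(0,T;H_0^1\cap H^2))$, $Z\in L^2_\mathbb{F}(\Omega;L^2(0,T;H_0^1\cap H^2))$, and $\sigma,X_d$ have the strengthened $H^2$-regularity assumed in the statement.

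For the forward estimate, I would split $X(t)-X_h^U(t)=\rho(t)+\theta(t)$ with $\rho(t):=X(t)-\mathcal{R}_h X(t)$ and $\theta(t):=\mathcal{R}_h X(t)-X_h^U(t)\in\mathbb{V}_h^1$. The projection piece $\rho$ is controlled by Lemma~\ref{pro_estimate} combined with the $H^2$-regularity of $X$. For $\theta$, the identities $\Delta_h\mathcal{R}_h X=\Pi_h^1\Delta X$ and the commutation with $\Pi_h^1$ turn the evolution into an SDE in $\mathbb{V}_h^1$ whose drift is driven by the projection error of the time derivative and whose diffusion coefficient reduces to $(\mathcal{R}_h-\Pi_h^1)\sigma$. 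Applying It\^o's formula to $\|\theta(t)\|^2$, exploiting the coercivity $(-\Delta_h\theta,\theta)=\|\nabla\theta\|^2$, using BDG to move the supremum inside the expectation and closing with Gronwall yields the $O(h^4)$ bound on $\mathbb{E}\sup_t\|X-X_h^U\|^2$ and the $O(h^2)$ bound on $\mathbb{E}\int_0^T\|\nabla(X-X_h^U)\|^2\,dt$, which after multiplication by $h^2$ gives the advertised $h^4$ rate.

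The backward estimates follow the analogous splitting $Y(t)-Y_h^U(t)=\eta(t)+\zeta(t)$ with $\eta(t):=Y(t)-\mathcal{R}_h Y(t)$ and $\zeta(t)\in\mathbb{V}_h^1$ satisfying a BSPDE with terminal condition $\zeta(T)=0$, drift driven by the projection error of $X(t)-X_d(t)+\mu$, and martingale integrand $Z_h^U-\Pi_h^1 Z$. For the pointwise-in-time bound $\sup_t\mathbb{E}\|Y-Y_h^U\|^2\le Ch^4$, the It\^o identity for $\|\zeta(t)\|^2$ integrated from $t$ to $T$ annihilates the martingale term under expectation and a backward Gronwall argument delivers the sharp $h^4$ rate using $Y\in L^\infty(0,T;H^2)$.

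The main obstacle is the strong estimate $\mathbb{E}\sup_t\|Y-Y_h^U\|^2\le Ch^2$, which is an order worse than the weak one. Moving the supremum inside the expectation forces a BDG application on $\int_0^\cdot(Z_h^U-\Pi_h^1 Z)\,dW_s$, which couples the $Z$-error back into the $Y$-error. Because $Z$ is only known to lie in $L^2_\mathbb{F}(\Omega;L^2(0,T;H^2))$ with no additional time smoothness, the best available rate for $\mathbb{E}\int_0^T\|Z-Z_h^U\|^2\,dt$ is $O(h^2)$, and this gap between the weak ($\sup\mathbb{E}$) and strong ($\mathbb{E}\sup$) orders cascades into the same reduced rate for $Y$ in the strong norm, as well as for $\nabla(Y-Y_h^U)$ in $L^2(0,T;L^2)$. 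Combining the $\|\zeta\|^2$ energy identity with the $\|\nabla\zeta\|^2$ dissipation simultaneously closes the $H^1$- and $Z$-estimates, exactly mirroring Theorem~3.6 of \cite{control2024_1}.
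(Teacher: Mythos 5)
Your proposal matches the paper exactly: the paper offers no independent proof of this lemma and simply deduces it from Theorems~3.2 and~3.6 of \cite{control2024_1}, which is precisely the reference you invoke. Your additional sketch of the underlying Ritz-projection splitting, It\^{o}/BDG/Gronwall argument, and the explanation of why the strong $\mathbb{E}\sup_t$ estimate for $Y$ and the $Z$-estimate drop to order $h^2$ without extra regularity, is a faithful reconstruction of that cited machinery and is consistent with the paper's subsequent lemma, which recovers the $h^4$ rate for $Z$ only under the strengthened assumption $Y\in L^2_{\mathbb{F}}\left(\Omega;L^2\left(0,T;H_0^1(\mathcal{D})\cap H^4(\mathcal{D})\right)\right)$.
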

Here, we want to show that the optimal convergence rate of $\int_{0}^{T}\mathbb{E}\left[\|Z(t)-Z_h^U(t)\|^2\right]dt$ 
can be achieved by the improved  regularity
\begin{equation}\label{data_assume}
\begin{aligned}
Y(t)\in L^2_{\mathbb{F}}\left(\Omega; L^2\left(0,T;H_{0}^{1}(\mathcal{D})\cap H^4(\mathcal{D})\right)\right).
\end{aligned}
\end{equation}
Then the following error estimate between $Z(t)$ and $Z_h^U(t)$ is given.
\begin{lemma} 
Under the assumptions in Lemma \ref{optimal_state} and condition (\ref{data_assume}), it holds that
\begin{equation*}
\begin{aligned}
\mathbb{E}\left[\int_{0}^{T}\|Z(t)-Z_{h}^U(t)\|^2dt\right]\leq Ch^4.
\end{aligned}
\end{equation*}
\end{lemma}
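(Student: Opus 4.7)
The plan is to sharpen the existing $O(h^2)$ bound from Lemma~\ref{optimal_state} by controlling the forcing of a Ritz-projection-based backward error equation at the optimal $O(h^2)$ rate in $L^2$, which squared gives the claimed $h^4$. First I split
\begin{equation*}
Z(t)-Z_h^U(t) = \bigl(Z(t)-\mathcal{R}_h Z(t)\bigr) + \bigl(\mathcal{R}_h Z(t)-Z_h^U(t)\bigr) =: \rho_Z(t) + \eta_h(t),
\end{equation*}
and observe that by Lemma~\ref{pro_estimate} together with the regularity $Z\in L^2_{\mathbb{F}}(\Omega;L^2(0,T;H_0^1\cap H^2))$ supplied by Lemma~\ref{regularity}, we immediately have $\mathbb{E}\int_0^T\|\rho_Z\|^2\,dt\le Ch^4$. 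It therefore suffices to bound $\mathbb{E}\int_0^T\|\eta_h\|^2\,dt$ at the same rate.

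Set $\xi_h=\mathcal{R}_h Y - Y_h^U\in\mathbb{V}_h^1$. Using the identity $\Delta_h\mathcal{R}_h v=\Pi_h^1\Delta v$ (valid for $v\in H_0^1\cap H^2$) applied to the BSPDE for $Y$ in (\ref{first_continuous}) and subtracting the auxiliary BSPDE for $Y_h^U$ in (\ref{semifu_first_continuous}), I obtain the backward SPDE
\begin{equation*}
d\xi_h = [-\Delta_h \xi_h + F_h]\,dt + \eta_h\,dW_t,\qquad \xi_h(T)=0,
\end{equation*}
with forcing $F_h = (\Pi_h^1-\mathcal{R}_h)g$ and $g := \Delta Y + X - X_d + \mu$. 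Applying It\^o's formula to $\|\xi_h(t)\|^2$, integrating over $[t,T]$, using $\xi_h(T)=0$ and taking expectation produces the identity
\begin{equation*}
\mathbb{E}\|\xi_h(t)\|^2 + 2\mathbb{E}\int_t^T\|\nabla\xi_h\|^2\,ds + \mathbb{E}\int_t^T\|\eta_h\|^2\,ds = -2\mathbb{E}\int_t^T(\xi_h,F_h)\,ds.
\end{equation*}
Bounding the right-hand side by Young's inequality and closing through a backward Gr\"onwall argument yields $\mathbb{E}\int_0^T\|\eta_h\|^2\,ds\le C\,\mathbb{E}\int_0^T\|F_h\|^2\,ds$.

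The forcing is estimated via Lemma~\ref{pro_estimate}: $\|(\Pi_h^1 - \mathcal{R}_h)g\| \le \|g - \Pi_h^1 g\| + \|g - \mathcal{R}_h g\| \le Ch^2\|g\|_{H^2}$. The improved regularity (\ref{data_assume}) gives $\Delta Y\in L^2(0,T;H^2)$, and combined with the $H^2$ regularity of $X$, $X_d$ from Lemma~\ref{regularity} this ensures $\mathbb{E}\int_0^T\|g\|_{H^2}^2\,dt<\infty$, whence $\mathbb{E}\int_0^T\|F_h\|^2\,dt\le Ch^4$. Combining with the bound on $\rho_Z$ delivers the claim. The hard part will be the careful derivation of the equation for $\xi_h$ and the justification of the $F_h$-estimate, since $g=\Delta Y+f$ need not lie in $H_0^1$ (in particular because of the constant multiplier $\mu$) so the action of $\mathcal{R}_h$ on $g$ must be interpreted through the equivalent variational characterisation; the role of the higher regularity (\ref{data_assume}) is precisely to upgrade the forcing control from an $O(h)$ estimate in the $H^1$-seminorm of $Y-\mathcal{R}_h Y$ (which was the bottleneck for the earlier suboptimal rate) to an $O(h^2)$ estimate in $L^2$, thereby doubling the convergence order for $Z-Z_h^U$.
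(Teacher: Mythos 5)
Your proposal is correct and takes essentially the same route as the paper's proof: the Ritz-projection splitting of $Z-Z_h^U$, an It\^o energy identity for $\mathcal{R}_hY-Y_h^U$ whose quadratic-variation term yields $\mathbb{E}\left[\int_0^T\|\mathcal{R}_hZ-Z_h^U\|^2dt\right]$, a Gr\"onwall closure, and the $O(h^4)$ control of the consistency term via Lemma \ref{pro_estimate} together with the $H^4$ regularity (\ref{data_assume}). The only (equivalent) cosmetic difference is that you write a single error equation with explicit forcing $(\Pi_h^1-\mathcal{R}_h)(\Delta Y+X-X_d+\mu)$ using $\Delta_h\mathcal{R}_h=\Pi_h^1\Delta$, whereas the paper keeps the same contribution in the form $\int\Theta_Y\,d\Lambda_Y$ with $\Lambda_Y=\mathcal{R}_hY-Y$ and estimates it identically.
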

\begin{proof}
Let $e_Y:=Y_{h}^{U}(t)-Y(t)=\Theta_Y(t)+\Lambda_Y(t)$ and $e_Z:=Z_{h}^{U}(t)-Z(t)=\Theta_Z(t)+\Lambda_Z(t)$, where 
\begin{equation*}
\begin{aligned}
&\Theta_Y(t)=Y_h^{U}(t)-\mathcal{R}_h Y(t),\quad\Lambda_Y(t)=\mathcal{R}_h Y(t)-Y(t),\\
&\Theta_Z(t)=Z_h^{U}(t)-\mathcal{R}_h Z(t),\quad\Lambda_Z(t)=\mathcal{R}_h Z(t)-Z(t).
\end{aligned}
\end{equation*}
Here $\Theta_Y(t)$ and $\Lambda_Y(t)$ are the solutions of BSPDEs
\begin{equation}\label{adjoint_theta}\left\{
\begin{aligned}
d\Theta_Y(t)&=
-\left[\Delta_h Y_h^{U}-\mathcal{R}_h\Delta Y(t)+(\Pi_{h}^{1}-\mathcal{R}_h)(X(t)-X_d(t)+\mu)\right]dt\\
&\quad+\Theta_Z(t)dW_t,\ t\in[0,T),\\
\Theta_Y(T)&=0
\end{aligned}\right.
\end{equation} 
and
\begin{equation}\label{adjoint_lambda}\left\{
\begin{aligned}
d\Lambda_Y(t)&=\left[(\mathcal{R}_h-I)\left(-\Delta Y(t)-(X(t)-X_d(t)+\mu)\right)\right]dt\\
&\quad+\Lambda_Z(t)dW_t,\ t\in[0,T),\\
\Lambda_Y(T)&=0.
\end{aligned}\right.
\end{equation} 
From the properties of projections $\Pi_{h}^{1}$ and $\mathcal{R}_h$, for any $\phi_h(x)\in\mathbb{V}_{h}^{1}$ it holds that
\begin{equation}
\begin{aligned}\label{semi_adjoint1}
&\int_{\mathcal{D}}\phi_h(x)d\Theta_Y(t)dx \\
&=-\int_{\mathcal{D}}\phi_h(x)d\Lambda_Y(t)dx-\int_{\mathcal{D}}\left(\Delta_hY_h^{U}(t)-\Delta Y(t)\right)\phi_h(x)dxdt  \\
&\quad+\int_{\mathcal{D}}\left(X(t)-X_d(t)+\mu-\Pi_h^1(X(t)-X_d(t)+\mu)\right)\phi_h(x)dtdx 
+\int_{\mathcal{D}}e_Z\phi_h(x)dxdW_t  \\
&=-\int_{\mathcal{D}}\phi_h(x)d\Lambda_Y(t)dx+\int_{\mathcal{D}}\nabla\left(Y_h^{U}(t)-Y(t)\right)\cdot\nabla\phi_h(x)dxdt  
+\int_{\mathcal{D}}e_Z\phi_h(x)dxdW_t \\
&=-\int_{\mathcal{D}}\phi_h(x)d\Lambda_Y(t)dx+\int_{\mathcal{D}}\nabla\Theta_Y(t)\cdot\nabla\phi_h(x)dxdt
+\int_{\mathcal{D}}e_Z\phi_h(x)dxdW_t.
\end{aligned}
\end{equation}
For the fixed $(\omega,t)\in\Omega\times[0,T]$, taking $\phi_h=\Theta_Y$ in (\ref{semi_adjoint1}) yields
\begin{equation}\label{semi_adjoint2}
\begin{aligned}
\int_{\mathcal{D}}\Theta_Y(t)d\Theta_Y(t)dx
=-\int_{\mathcal{D}}\Theta_Y(t)d\Lambda_Y(t)dx+\int_{\mathcal{D}}|\nabla\Theta_Y(t)|^2dxdt
+\int_{\mathcal{D}}e_Z\Theta_Y(t)dxdW_t.
\end{aligned}
\end{equation}
Using the It\^{o}'s formula yields
\begin{equation}\label{error_equ2}
\begin{aligned}
d(\Theta_Y(t))^2=2\Theta_Y(t)d\Theta_Y(t)+d\Theta_Y(t)d\Theta_Y(t),
\end{aligned}
\end{equation}
From (\ref{adjoint_theta}), (\ref{semi_adjoint2}) and (\ref{error_equ2}), it holds that
\begin{equation}\label{semi_adjoint3}
\begin{aligned}
&\mathbb{E}\left[\|\Theta_Y(t)\|^2\right]+2\mathbb{E}\left[\int_{t}^{T}\|\nabla\Theta_Y(s)\|^2ds\right]
+\mathbb{E}\left[\int_{t}^{T}\|\Theta_Z(s)\|^2ds\right]\\
&=\mathbb{E}\left[\|\Theta_Y(T)\|^2\right]+2\mathbb{E}\left[\int_{t}^{T}\int_{D}\Theta_Y(s)d\Lambda_{Y}(s)dx  \right]\\
&=2\mathbb{E}\left[\int_{t}^{T}\int_{D}\Theta_Y(s)d\Lambda_{Y}(s)dx  \right].
\end{aligned}
\end{equation}
From (\ref{adjoint_lambda}), Lemma \ref{pro_estimate}, Cauchy-Schwartz inequality and the regularity of $Y(t)$, it holds that
\begin{align*}\label{semi_adjoint5}
&\mathbb{E}\left[\int_{t}^{T}\int_{\mathcal{D}}\Theta_Y(s)d\Lambda_Y(s)dx\right]\nonumber\\
&\leq\mathbb{E}\left[\int_{t}^{T}\frac{\|\Theta_Y(s)\|^2}{2}+\|(\mathcal{R}_h-I)\Delta Y(s)\|^2+\|(\mathcal{R}_h-I)\left(X(s)-X_d(s)+\mu\right)\|^2ds\right]\nonumber\\
&\leq\mathbb{E}\left[\int_{t}^{T}\frac{\|\Theta_Y(s)\|^2}{2}ds\right]+Ch^4\mathbb{E}\left[\int_{t}^{T}\|Y(s)\|_{H^4(\mathcal{D})}^{2}+\|X(s)-X_d(s)+\mu\|_{H^2(\mathcal{D})}^{2}ds\right]\nonumber\\
&\leq Ch^4+\mathbb{E}\left[\int_{t}^{T}\frac{\|\Theta_Y(s)\|^2}{2}ds\right].
\end{align*}
Taking the above estimate into (\ref{semi_adjoint3}) gives
\begin{equation*}
\begin{aligned}
&\mathbb{E}\left[\|\Theta_Y(t)\|^2\right]+\mathbb{E}\left[\int_{t}^{T}\|\nabla\Theta_Y(s)\|^2ds\right]
+\mathbb{E}\left[\int_{t}^{T}\|\Theta_Z(s)\|^2ds\right]\\
&\leq Ch^4+\mathbb{E}\left[\int_{t}^{T}\|\Theta_Y(s)\|^2ds\right].
\end{aligned}
\end{equation*}
Then by the Gronwall inequality one gets
\begin{equation}\label{semi_adjoint6}
\begin{aligned}
\mathbb{E}\left[\|\Theta_Y(t)\|^2\right]+\mathbb{E}\left[\int_{0}^{T}\|\nabla\Theta_Y(t)\|^2dt\right]
+\mathbb{E}\left[\int_{0}^{T}\|\Theta_Z(t)\|^2dt\right]\leq Ch^4.
\end{aligned}
\end{equation}
Employing the regularity $Z(t)$ and Lemma \ref{pro_estimate} we arrive at
\begin{equation}\label{semi_adjoint7}
\begin{aligned}
\mathbb{E}\left[\int_{0}^{T}\|\Lambda_Z(t)\|^2dt\right]\leq Ch^4
\mathbb{E}\left[\int_{0}^{T}\|Z(t)\|_{H^{2}(\mathcal{D})}^2dt\right]\leq Ch^4.
\end{aligned}
\end{equation}
Combining (\ref{semi_adjoint6}) and (\ref{semi_adjoint7}) yields
\begin{equation*}\label{estimate_eyez}
\begin{aligned}
\mathbb{E}\left[\int_{0}^{T}\|e_Z\|^2dt\right]\leq 2\mathbb{E}\left[\int_{0}^{T}\|\Theta_Z(t)\|^2dt\right]
+2\mathbb{E}\left[\int_{0}^{T}\|\Lambda_Z(t)\|^2dt\right]\leq Ch^4.
\end{aligned}
\end{equation*}
\end{proof}

\section{Space-time discretization and optimal error estimates}
In this section, the space-time discretization of SOCP is first given and the fully discrete first-order optimality condition is rigorously derived. We then give the fully discrete version of FBSPDEs in (\ref{first_continuous}) and derive the optimal strong convergence orders of time and space in subsection \ref{errror_fbspdes}. Optimal a priori error estimates of control, state, adjoint state and multiplier are deduced in subsection \ref{error_control} based on the results obtained in subsection \ref{errror_fbspdes}.

\subsection{Space-time discretization and error estimates of FBSPDEs}\label{errror_fbspdes}
We begin this part by introducing the following space-time fully discrete spaces for $n=0,1,...,N-1$:
\begin{equation*}
\begin{aligned}
&\mathbb{X}_{h\tau}:=\left\{X_{h\tau}\in L^2_{\mathbb{F}}\left(\Omega;L^2(0,T;\mathbb{V}_{h}^{1})\right)|X_{h\tau}(t)=\mathbb{E}[X_{h\tau}(t_{n+1})|\mathcal{F}_{t_n}],\ \forall t\in(t_n,t_{n+1})\right\},\\
&\mathbb{U}_{h\tau}:=\left\{U_{h\tau}\in L^2_{\mathbb{F}}\left(\Omega;L^2(0,T;\mathbb{V}_{h}^{1})\right)|U_{h\tau}(t)=U_{h\tau}(t_{n}),\ \forall t\in[t_n,t_{n+1})\right\},\\
&\mathbb{R}_{h\tau}:=\left\{R_{h\tau}\in L^2(0,T;\mathbb{V}_{h}^{1})|R_{h\tau}(t)=R_{h\tau}(t_{n+1}),\ \forall t\in(t_n,t_{n+1}]\right\},\\
&\mathbb{L}_{h\tau}:=\left\{L_{h\tau}\in L^2(0,T;\mathbb{V}_{h}^{1})|L_{h\tau}(t)=L_{h\tau}(t_{n}),\ \forall t\in[t_n,t_{n+1})\right\}.
\end{aligned}
\end{equation*}
For $t_n\in\Pi$, let $\tilde{X}_{d}(t_n)=X_d(t_n)$ and 
define 
$\tilde{X}_{d}(t):=\mathbb{E}[X_d(t_{n+1})|\mathcal{F}_{t_n}],t\in(t_n,t_{n+1})$ for $n=0,1,...,N-1$. Let $K_{h\tau}:=K\cap\mathbb{X}_{h\tau}$, then the fully discrete SOCP reads: Find an optimal pair $\left(X_{h\tau}(t),U_{h\tau}(t)\right)\in K_{h\tau}\times \mathbb{U}_{h\tau}$ which minimizes the cost functional
\begin{equation}\label{fully_object}
\begin{aligned}
J_{h\tau}(X_{h\tau},U_{h\tau})
&=\frac{1}{2}\int_{0}^{T}\mathbb{E}\left[\|X_{h\tau}-\tilde{X}_{d}\|^2+\alpha\|U_{h\tau}\|^2\right]dt
\end{aligned}
\end{equation}
subject to
\begin{equation}\label{fully_state}
\left\{\begin{aligned}
&X_{h\tau}(t_{n+1})-X_{h\tau}(t_{n})=\tau\left[\Delta_hX_{h\tau}(t_{n+1})+U_{h\tau}(t_n)\right]+\Pi_{h}^{1}\sigma(t_n)\Delta W_{n+1},\\
&X_{h\tau}(0)=\Pi_{h}^{1}X_0,
\end{aligned}\right.
\end{equation}
where $\Delta W_{n+1}=W_{t_{n+1}}-W_{t_n}$. We define $G_{h\tau}(U_{h\tau}(t)):=\int_{0}^{T}\int_{\mathcal{D}}\mathbb{E}\left[X_{h\tau}(U_{h\tau})\right]dxdt-\delta$ and the reduced cost functional of fully discrete SOCP (\ref{fully_object})-(\ref{fully_state}) with the control domain 
\begin{equation}\label{lisan_control_domain}
\begin{aligned}
\mathbb{U}_{h\tau,\delta}:=\{U_{h\tau}(t)\in\mathbb{U}_{h\tau}|X_{h\tau}(U_{h\tau})\in\mathbb{X}_{h\tau},G_{h\tau}\left(U_{h\tau}(t)\right)\leq0\}
\end{aligned}
\end{equation}
is denoted as 
\begin{equation*}
\begin{aligned}
\mathcal{J}_{h\tau}\left(U_{h\tau}(t)\right)=\frac{1}{2}\int_{0}^{T}\mathbb{E}\left[\|X_{h\tau}(U_{h\tau})-\tilde{X}_{d}\|^2+\alpha\|U_{h\tau}\|^2\right]dt.
\end{aligned}
\end{equation*}
Then the following fully discrete first-order optimality condition of (\ref{fully_object})-(\ref{lisan_control_domain}) is derived.
\begin{theorem}\label{fully_first_theorem}
Under the Slater's condition, 
let $\left(X_{h\tau}(t),U_{h\tau}(t)\right)\in K_{h\tau}\times \mathbb{U}_{h\tau}$ be the solution of SOCP (\ref{fully_object})-(\ref{fully_state}), then there exists a constant $\mu_{h\tau}\geq0$ and a pair of process $(Y_{h\tau}(t),Z_{h\tau}(t))\in \left(\mathbb{U}_{h\tau}\right)^2$ such that $(X_{h\tau}(t),Y_{h\tau}(t),Z_{h\tau}(t),\mu_{h\tau},U_{h\tau}(t))$ satisfies the following optimality condition system for $n=0,1,...,N-1$:
\begin{equation}\label{fully_first_continuous}
\left\{\begin{aligned}
&X_{h\tau}(t_{n+1})-X_{h\tau}(t_{n})=\tau\left[\Delta_hX_{h\tau}(t_{n+1})+U_{h\tau}(t_n)\right]+\Pi_{h}^{1}\sigma(t_n)\Delta W_{n+1},\\
&X_{h\tau}(0)=\Pi_{h}^{1}X_0,\\
&Z_{h\tau}(t_n)=\mathbb{E}\left[\left(\tau^{-1}Y_{h\tau}(t_{n+1})+\Pi_{h}^{1}\left(X_{h\tau}(t_{n+1})- X_{d}(t_{n+1})\right) \right)\Delta W_{n+1}  |\mathcal{F}_{t_n}\right],\\
&Y_{h\tau}(t_{n})=\mathbb{E}\left[Y_{h\tau}(t_{n+1})+\tau\Delta_hY_{h\tau}(t_{n})+\tau  \Pi_{h}^{1}\left(X_{h\tau}(t_{n+1})- X_{d}(t_{n+1})+\mu_{h\tau}\right)   |\mathcal{F}_{t_n}\right],\\
&Y_{h\tau}(T)=0,\\
&\mathbb{E}\left[\left[\mu_{h\tau}, W_{h\tau}(t)-X_{h\tau}(t)\right]\right]\leq0,\ \forall W_{h\tau}(t)\in K_{h\tau},\\
&Y_{h\tau}(t)+\alpha U_{h\tau}(t)=0,
\end{aligned}\right.
\end{equation}
where $\mu_{h\tau}$ satisfies 
\begin{equation}\label{lisan_mu_property}
\mu_{h\tau}=\left\{\begin{aligned}
&\mathrm{constant}\geq0,\ &&\mathrm{if}\ \mathbb{E}\left[[1,X_{h\tau}(t)]\right]=\delta,\\
&0,\ &&\mathrm{if}\ \mathbb{E}\left[[1,X_{h\tau}(t)]\right]<\delta.
\end{aligned}\right.
\end{equation}
\end{theorem}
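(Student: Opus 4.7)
The plan is to follow the blueprint of Theorem \ref{lianxu_first_condition}, replacing continuous It\^o duality by a discrete summation-by-parts identity. I would first apply a standard KKT theorem (e.g.\ Theorem 1.56 of \cite{deter2}) to the reduced fully discrete problem $\min_{U_{h\tau}\in\mathbb{U}_{h\tau,\delta}}\mathcal{J}_{h\tau}(U_{h\tau})$. Slater's condition guarantees the existence of a multiplier $\mu_{h\tau}\ge 0$ satisfying $\mu_{h\tau}G_{h\tau}(U_{h\tau})=0$ and $\mathcal{L}'_{h\tau,U}(U_{h\tau},\mu_{h\tau})(V_{h\tau}-U_{h\tau})=0$ for every $V_{h\tau}\in\mathbb{U}_{h\tau}$, where $\mathcal{L}_{h\tau}:=\mathcal{J}_{h\tau}+\mu_{h\tau}G_{h\tau}$. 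Mirroring the manipulation leading to (\ref{first_continuous_third}), the complementary slackness combined with $X_{h\tau}\in K_{h\tau}$ already yields both the inequality $\mathbb{E}[[\mu_{h\tau},W_{h\tau}-X_{h\tau}]]\le 0$ for all $W_{h\tau}\in K_{h\tau}$ and the dichotomy (\ref{lisan_mu_property}).

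Next, to compute the derivative I would set $Q_{h\tau}:=X'_{h\tau}(U_{h\tau})(V_{h\tau}-U_{h\tau})\in\mathbb{X}_{h\tau}$, which, by linearity of (\ref{fully_state}), satisfies
\begin{equation*}
Q_{h\tau}(t_{n+1})-Q_{h\tau}(t_n)=\tau\Delta_h Q_{h\tau}(t_{n+1})+\tau(V_{h\tau}(t_n)-U_{h\tau}(t_n)),\qquad Q_{h\tau}(0)=0,
\end{equation*}
with no diffusion term since $\sigma$ is independent of $U_{h\tau}$. Expanding $\mathcal{L}'_{h\tau,U}$ as in (\ref{derivative_L}) then gives
\begin{equation*}
\mathcal{L}'_{h\tau,U}(U_{h\tau},\mu_{h\tau})(V_{h\tau}-U_{h\tau})=\int_0^T\mathbb{E}\bigl[(X_{h\tau}-\tilde X_d+\mu_{h\tau},Q_{h\tau})+\alpha(U_{h\tau},V_{h\tau}-U_{h\tau})\bigr]\,dt.
\end{equation*}

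The central and most delicate step is the discrete duality argument replacing the It\^o computation (\ref{ito_formul}). I would \emph{define} $(Y_{h\tau},Z_{h\tau})$ by the backward scheme in (\ref{fully_first_continuous}): starting from $Y_{h\tau}(T)=0$, at each step apply the martingale representation to the $\mathcal{F}_{t_{n+1}}$-measurable random variable $Y_{h\tau}(t_{n+1})+\tau\Pi_h^1(X_{h\tau}(t_{n+1})-X_d(t_{n+1}))$ to extract $Z_{h\tau}(t_n)$, giving precisely the stated conditional-expectation formula, and then solve implicitly for $Y_{h\tau}(t_n)$ via $(I-\tau\Delta_h)Y_{h\tau}(t_n)=\mathbb{E}[\,\cdot\,|\mathcal{F}_{t_n}]$. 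Testing the variational equation with $Y_{h\tau}(t_n)\in\mathbb{V}_h^1$, using the self-adjointness $(\Delta_h Q_{h\tau}(t_{n+1}),Y_{h\tau}(t_n))=(Q_{h\tau}(t_{n+1}),\Delta_h Y_{h\tau}(t_n))$, and applying Abel summation on $\sum_n(Y_{h\tau}(t_n),Q_{h\tau}(t_{n+1})-Q_{h\tau}(t_n))$, the stochastic cross terms involving $Z_{h\tau}(t_n)\Delta W_{n+1}$ vanish by $\mathcal{F}_{t_n}$-measurability of $Q_{h\tau}(t_n)-\tau(V_{h\tau}(t_n)-U_{h\tau}(t_n))$ and the tower property, and one arrives at the discrete duality
\begin{equation*}
\int_0^T\mathbb{E}\bigl[(X_{h\tau}-\tilde X_d+\mu_{h\tau},Q_{h\tau})\bigr]\,dt=\int_0^T\mathbb{E}\bigl[(Y_{h\tau},V_{h\tau}-U_{h\tau})\bigr]\,dt.
\end{equation*}

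I expect the main obstacle to be the careful bookkeeping of time indices in this summation-by-parts — in particular matching the implicit Laplacian $\Delta_h X_{h\tau}(t_{n+1})$ in the forward scheme with the implicit Laplacian $\Delta_h Y_{h\tau}(t_n)$ in the backward scheme, reconciling the appearance of $\tilde X_d$ in the cost with $X_d(t_{n+1})$ in the adjoint via conditional expectation, and checking that the specific definition of $Z_{h\tau}(t_n)$ as a conditional expectation against $\Delta W_{n+1}$ is exactly what is required to annihilate the martingale increments. Once this identity is in hand, substitution into the expression for $\mathcal{L}'_{h\tau,U}$ together with the arbitrariness of $V_{h\tau}\in\mathbb{U}_{h\tau}$ forces $Y_{h\tau}+\alpha U_{h\tau}=0$, completing the optimality system.
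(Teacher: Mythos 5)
Your proposal follows essentially the same route as the paper's proof: apply the KKT theorem (Theorem 1.56 of \cite{deter2}) to the reduced discrete problem, obtain the variational inequality and the dichotomy for $\mu_{h\tau}$ from complementary slackness, introduce $Q_{h\tau}$ and the backward scheme for $(Y_{h\tau},Z_{h\tau})$, and establish the discrete duality identity by summation by parts using the $\mathcal{F}_{t_n}$-measurability of $Q_{h\tau}(t_{n+1})$ so that the martingale increments drop out, forcing $Y_{h\tau}+\alpha U_{h\tau}=0$. Your additional detail on constructing $Z_{h\tau}(t_n)$ via martingale representation and the implicit solve $(I-\tau\Delta_h)Y_{h\tau}(t_n)=\mathbb{E}[\cdot|\mathcal{F}_{t_n}]$ is a welcome elaboration of a step the paper leaves implicit, but it does not change the argument.
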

\begin{proof}
Following the Theorem \ref{lianxu_first_condition}, for any $V_{h\tau}(t)\in\mathbb{U}_{h\tau}$ there exist a real number $\mu_{h\tau}\geq0$ such that
\begin{equation*}
\begin{aligned}
\mu_{h\tau}G_{h\tau}\left(U_{h\tau}(t)\right)=0,\ \mathcal{\tilde{L}}_{U_{h\tau}}^{\prime}\left(U_{h\tau}(t),\mu_{h\tau}\right)\left(V_{h\tau}(t)-U_{h\tau}(t)\right)=0,
\end{aligned}
\end{equation*}
where $\mathcal{\tilde{L}}\left(U_{h\tau}(t),\mu_{h\tau}\right):=\mathcal{J}_{h\tau}\left(U_{h\tau}(t)\right)+\mu_{h\tau}G_{h\tau}\left(U_{h\tau}(t)\right)$ is the Lagrangian functional.
Performing the analysis like (\ref{first_continuous_third}) and (\ref{derivative_L}), for any $W_{h\tau}(t)\in K_{h\tau}$ and $V_{h\tau}(t)\in\mathbb{U}_{h\tau}$, we have
\begin{equation}\label{full_first_continuous_third}
\begin{aligned}
\mathbb{E}\left[[\mu_{h\tau},W_{h\tau}(t)-X_{h\tau}(U_{h\tau})]\right]=\mathbb{E}\left[[\mu_{h\tau},W_{h\tau}(t)]\right]-\mu_{h\tau}\delta\leq0
\end{aligned}
\end{equation}
and
\begin{equation}\label{lisan_derivative_L}
\begin{aligned}
&\mathcal{\tilde{L}}_{U_{h\tau}}^{\prime}\left(U_{h\tau}(t),\mu_{h\tau}\right)\left(V_{h\tau}(t)-U_{h\tau}(t)\right)\\
&=\mathbb{E}\left[\left[X_{h\tau}(U_{h\tau})-\tilde{X}_d(t)+\mu_{h\tau},Q_{h\tau}(t)\right]
+\alpha\left[U_{h\tau}(t),V_{h\tau}(t)-U_{h\tau}(t)\right]    \right],
\end{aligned}
\end{equation}
where $Q_{h\tau}(t)\in\mathbb{X}_{h\tau}$ is solved by the following equation:
\begin{equation}\label{DX_ht}
\left\{\begin{aligned}
&Q_{h\tau}(t_{n+1})-Q_{h\tau}(t_n)
=\tau\left[\Delta_hQ_{h\tau}(t_{n+1})+V_{h\tau}(t_n)-U_{h\tau}(t_n)\right],\\
&Q_{h\tau}(0)=0.
\end{aligned}\right.
\end{equation}
In order to eliminate $Q_{h\tau}(t)$ in (\ref{lisan_derivative_L}), the following backward SPDE is introduced: Find $(Y_{h\tau}(t),Z_{h\tau}(t))\in\left(\mathbb{U}_{h\tau}\right)^2$ such that
\begin{equation}\label{fully_adjoint}
\left\{\begin{aligned}
&Z_{h\tau}(t_n)=\mathbb{E}\left[\left(\tau^{-1}Y_{h\tau}(t_{n+1})+\Pi_{h}^{1}\left(X_{h\tau}(t_{n+1})- X_{d}(t_{n+1})\right) \right)\Delta W_{n+1}  |\mathcal{F}_{t_n}\right],\\
&Y_{h\tau}(t_{n})=\mathbb{E}\left[Y_{h\tau}(t_{n+1})+\tau\Delta_hY_{h\tau}(t_{n})+\tau  \Pi_{h}^{1}\left(X_{h\tau}(t_{n+1})- X_{d}(t_{n+1})+\mu_{h\tau}\right)   |\mathcal{F}_{t_n}\right],\\
&Y_{h\tau}(T)=0.
\end{aligned}\right.
\end{equation}
Employing the discrete It\^{o}'s formula for (\ref{DX_ht}) and (\ref{fully_adjoint}) as well as the $\mathcal{F}_{t_n}$-measurable of $Q_{h\tau}(t_{n+1})$ gives
\begin{equation}\label{discrete_ito}
\begin{aligned}
&\int_{\mathcal{D}}\tau\sum_{n=0}^{N-1}\mathbb{E}\left[\left(X_{h\tau}(t_{n+1})-X_d(t_{n+1})+\mu_{h\tau}\right)Q_{h\tau}(t_{n+1})\right]dx\\
&=\int_{\mathcal{D}}\tau\sum_{n=0}^{N-1}\mathbb{E}\left[Y_{h\tau}(t_{n})\left(V_{h\tau}(t_n)-U_{h\tau}(t_n)\right)\right] dx.
\end{aligned}
\end{equation}
By the property of conditional expectation, (\ref{discrete_ito}) is equivalent to the following form:
\begin{equation*}
\begin{aligned}
\mathbb{E}\left[ \left[X_{h\tau}(t)-\tilde{X}_d(t)+\mu_{h\tau},Q_{h\tau}(t)                     \right]\right]=
\mathbb{E}\left[\left[Y_{h\tau}(t),V_{h\tau}(t)-U_{h\tau}(t)\right]\right].
\end{aligned}
\end{equation*}
Therefore, for any $V_{h\tau}(t)\in\mathbb{U}_{h\tau}$, (\ref{lisan_derivative_L}) can be rewritten as follows:
\begin{equation*}
\begin{aligned}
\mathcal{\tilde{L}}_{U_{h\tau}}^{\prime}\left(U_{h\tau}(t),\mu_{h\tau}\right)\left(V_{h\tau}(t)-U_{h\tau}(t)\right)
=\mathbb{E}\left[\left[Y_{h\tau}(t)+\alpha U_{h\tau}(t),V_{h\tau}(t)-U_{h\tau}(t)\right]\right]=0,
\end{aligned}
\end{equation*}
which implies that $Y_{h\tau}(t)+\alpha U_{h\tau}(t)=0$. 

Combining (\ref{fully_state}), (\ref{full_first_continuous_third}) and (\ref{fully_adjoint}) yields (\ref{fully_first_continuous}), and (\ref{lisan_mu_property}) is obtained directly from the fact that $\mu_{h\tau}\geq 0$ and $\mu_{h\tau}G_{h\tau}\left(U_{h\tau}(t)\right)=0$.
\end{proof}

Next the following fully discrete auxiliary problem is given: Find $\left(  X_{h\tau}^{U}(t),Y_{h\tau}^{U}(t),Z_{h\tau}^{U}(t)\right)\in\mathbb{X}_{h\tau}\times\left(\mathbb{U}_{h\tau}\right)^2$ such that
\begin{equation}\label{fullyfu_first_continuous}
\left\{\begin{aligned}
&X_{h\tau}^{U}(t_{n+1})-X_{h\tau}^{U}(t_{n})=\tau\left[\Delta_hX_{h\tau}^{U}(t_{n+1})+\Pi_{h}^{1}U(t_n)\right]+\Pi_{h}^{1}\sigma(t_n)\Delta W_{n+1},\\
&X_{h\tau}^{U}(0)=\Pi_{h}^{1}X_0,\\
&Z_{h\tau}^U(t_n)=\mathbb{E}\left[\left(\tau^{-1}Y_{h\tau}^U(t_{n+1})+\Pi_{h}^{1}\left(X(t_{n+1})- X_{d}(t_{n+1})\right) \right)\Delta W_{n+1}  |\mathcal{F}_{t_n}\right],\\
&Y_{h\tau}^U(t_{n})=\mathbb{E}\left[Y_{h\tau}^U(t_{n+1})+\tau\Delta_hY_{h\tau}^U(t_{n})+\tau  \Pi_{h}^{1}\left(X(t_{n+1})- X_{d}(t_{n+1})+\mu\right)   |\mathcal{F}_{t_n}\right],\\
&Y_{h\tau}^{U}(T)=0.
\end{aligned}\right.
\end{equation}
Note that (\ref{fullyfu_first_continuous}) is the space-time fully discrete approximation of the FBSPDEs in (\ref{first_continuous}). In order to deduce the optimal strong convergence orders in space and time, the following equation is introduced:
\begin{equation}
\left\{\begin{aligned}
&d\hat{Y}_{h\tau}(t)=\left[-\Delta_hY_{h\tau}^{U}(\pi(t))-\Pi_{h}^{1}\left(X(\tau(t))- X_{d}(\tau(t))+\mu\right) \right]dt+\hat{Z}_{h\tau}(t)dW_t,\\
&\hat{Y}_{h\tau}(T)=0,
\end{aligned}\right.
\end{equation}
where $\pi(t)=t_n$ and $\tau(t)=t_{n+1}$ with $t\in[t_n,t_{n+1})$. According to the Lemma 3.4 in \cite{control2021}, it follows that
\begin{equation}\label{Fuzhu2}
\begin{aligned}
Y_{h\tau}^U(t_n)=\hat{Y}_{h\tau}(t_n),\ Z_{h\tau}^U(t_n)=\tau^{-1}\mathbb{E}\left[\int_{t_n}^{t_{n+1}}\hat{Z}_{h\tau}(s)ds |\mathcal{F}_{t_n}\right].
\end{aligned}
\end{equation}
Then, the following estimates are obtained with the results of Lemma \ref{optimal_state} and (\ref{Fuzhu2}).
\begin{theorem}\label{fbspde_exact_fu}
Under the regularities in Lemma \ref{optimal_state}, we assume $X_d(t)$ and $\sigma(t)$ satisfy
\begin{equation}\label{sigma_xingzhi}
\begin{aligned}
\sum_{n=0}^{N-1}\int_{t_n}^{t_{n+1}}\mathbb{E}\left[\|X_d(t)-X_d(t_{n+1})\|^2+\|\sigma(t)-\sigma(t_n)\|^2\right]dt\leq C\tau^2.
\end{aligned}
\end{equation}
Let $\left(X(t),Y(t)\right)$ and $\left(X_{h\tau}^{U}(t),Y_{h\tau}^{U}(t)\right)$ be the solutions solved by FBSPDEs in (\ref{first_continuous}) and (\ref{fullyfu_first_continuous}), respectively, then we have the following optimal estimates:
\begin{equation*}
\begin{aligned}
&\max_{0\leq n \leq N}\mathbb{E}\left[\|X(t_n)-X_{h\tau}^U(t_n)\|^2\right]+\max_{0\leq n\leq N}\mathbb{E}\left[\|Y(t_n)-Y_{h\tau}^U(t_n)\|^2\right]
\leq C(h^4+\tau^2),\\
&\tau\sum_{n=0}^{N-1}\mathbb{E}\left[\|\nabla\left(X(t_{n+1})-X_{h\tau}^U(t_{n+1})\right)\|^2+\|\nabla\left(Y(t_{n})-Y_{h\tau}^U(t_n)\right)\|^2\right]\leq C(h^2+\tau^2).
\end{aligned}
\end{equation*} 
\end{theorem}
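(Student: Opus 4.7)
The plan is to pivot through the spatial semi-discrete auxiliary system $(X_h^U,Y_h^U,Z_h^U)$ from (\ref{semifu_first_continuous}) and split the error by the triangle inequality,
\begin{equation*}
\|X(t_n)-X_{h\tau}^U(t_n)\|\leq\|X(t_n)-X_h^U(t_n)\|+\|X_h^U(t_n)-X_{h\tau}^U(t_n)\|,
\end{equation*}
and analogously for $Y$ and for the gradient norms. Lemma \ref{optimal_state} already delivers the spatial halves at the optimal orders ($O(h^4)$ and $O(h^2)$), so the whole task reduces to a purely temporal implicit-Euler analysis: establish $O(\tau^2)$ for the maximum nodal expectation of the squared $L^2$ error, and $O(\tau^2)$ for the discrete $L^2(0,T;H^1)$ error, between the semi-discrete and fully discrete auxiliary FBSPDEs. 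Because the forcings in (\ref{fullyfu_first_continuous}) use the exact $U(t)$, $X(t)$ and $\mu$ rather than their discrete counterparts, the forward and backward temporal analyses fully decouple.

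For the forward error, I would integrate the semi-discrete SDE satisfied by $X_h^U$ over each $[t_n,t_{n+1}]$ and subtract (\ref{fullyfu_first_continuous}) at $t_{n+1}$. Writing $E_n^X:=X_h^U(t_n)-X_{h\tau}^U(t_n)$, this yields an implicit-Euler-type identity
\begin{equation*}
E_{n+1}^X-E_n^X-\tau\Delta_hE_{n+1}^X=\rho_{n+1}^{\mathrm{det}}+\rho_{n+1}^{\mathrm{sto}},
\end{equation*}
where $\rho_{n+1}^{\mathrm{det}}$ collects the freezing residuals $\int_{t_n}^{t_{n+1}}\Delta_h(X_h^U(s)-X_h^U(t_{n+1}))\,ds$ and $\int_{t_n}^{t_{n+1}}\Pi_h^1(U(s)-U(t_n))\,ds$, and $\rho_{n+1}^{\mathrm{sto}}=\int_{t_n}^{t_{n+1}}\Pi_h^1(\sigma(s)-\sigma(t_n))\,dW_s$. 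Testing against $E_{n+1}^X$, using the coercivity $(-\Delta_hv,v)=\|\nabla v\|^2$, the Itô isometry together with assumption (\ref{sigma_xingzhi}) to dominate the stochastic residual by $C\tau^2$, the $L^2(0,T;H^1)$ regularity of $X_h^U$ and $U$ inherited from Lemma \ref{regularity}, and a discrete Gronwall argument, produces both $\max_n\mathbb{E}[\|E_n^X\|^2]\leq C\tau^2$ and $\tau\sum_n\mathbb{E}[\|\nabla E_{n+1}^X\|^2]\leq C\tau^2$.

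For the backward error I would exploit identity (\ref{Fuzhu2}) to identify the fully discrete adjoint with the continuous-time BSPDE for $(\hat{Y}_{h\tau},\hat{Z}_{h\tau})$, so that the error $E^Y(t):=Y_h^U(t)-\hat{Y}_{h\tau}(t)$, $E^Z(t):=Z_h^U(t)-\hat{Z}_{h\tau}(t)$ satisfies a BSPDE on $[0,T]$. Applying Itô's formula to $\|E^Y(t)\|^2$ on $[t_n,T]$ and taking expectations gives
\begin{equation*}
\mathbb{E}\|E^Y(t_n)\|^2+\int_{t_n}^T\mathbb{E}\left[2\|\nabla E^Y\|^2+\|E^Z\|^2\right]ds=2\int_{t_n}^T\mathbb{E}\left[(E^Y,\mathcal{R}_\tau)\right]ds,
\end{equation*}
where $\mathcal{R}_\tau$ bundles the temporal mismatches $\Delta_h(Y_h^U(s)-Y_h^U(\pi(s)))$ and $\Pi_h^1[(X(s)-X(\tau(s)))+(X_d(\tau(s))-X_d(s))]$. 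Controlling $\mathcal{R}_\tau$ through the $H^1$-in-space regularity of $Y$ and $X$ from Lemma \ref{regularity}, the time-continuity estimate $\mathbb{E}\|X(s)-X(\tau(s))\|^2\leq C\tau$ obtained from (\ref{state}) together with assumption (\ref{sigma_xingzhi}), and the $X_d$ part of (\ref{sigma_xingzhi}) directly, followed by Gronwall's inequality, yields the claimed $\max_n\mathbb{E}\|Y_h^U(t_n)-Y_{h\tau}^U(t_n)\|^2\leq C\tau^2$ bound and the $\tau\sum_n\mathbb{E}\|\nabla(\cdot)\|^2$ bound from the coercive $\nabla E^Y$ term.

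The hardest step will be the backward identification: one must correctly match the discrete $Z_{h\tau}^U(t_n)$ with the conditional time-average of $\hat{Z}_{h\tau}$ through (\ref{Fuzhu2}), and verify that freezing the Laplacian at $\pi(t)$ rather than $\tau(t)$ still leaves a nonnegative cross-term so that the full coercive $\|\nabla E^Y\|^2$ appears. Moreover, the optimal temporal rate really hinges on (\ref{sigma_xingzhi}): without this extra mean-square Hölder-$1$ regularity in time for $\sigma$ and $X_d$, both the Itô isometry estimate on the frozen-noise residual and the time-regularity estimate on $X(s)-X(\tau(s))$ would only yield $O(\tau^{1/2})$, and the claimed optimal $O(\tau)$ rate (in the unsquared norm) would be lost.
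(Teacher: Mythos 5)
Your overall architecture coincides with the paper's: split through the spatial semi-discrete auxiliary system $(X_h^U,Y_h^U,Z_h^U)$ via the triangle inequality, invoke Lemma \ref{optimal_state} for the spatial halves, reduce to a purely temporal implicit-Euler analysis of $X_h^U(t_n)-X_{h\tau}^U(t_n)$ and $Y_h^U(t_n)-\hat{Y}_{h\tau}(t_n)$ using (\ref{Fuzhu2}), and close with discrete Gronwall arguments. The treatment of the frozen-noise residual by It\^{o} isometry together with (\ref{sigma_xingzhi}) is also exactly what the paper does.

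However, there is a genuine gap in how you propose to control the increments of the stochastic processes themselves. In the backward estimate you bound the residual containing $X(s)-X(\tau(s))$ by the mean-square time-continuity estimate $\mathbb{E}\|X(s)-X(\tau(s))\|^2\leq C\tau$. Since $X$ contains the stochastic integral $\int\sigma\,dW$, this H\"older-$\tfrac12$ rate cannot be improved, and after summing over the $N$ subintervals it contributes a term of size $O(\tau)$ (not $O(\tau^2)$) to the Gronwall inequality; you would only obtain $\max_n\mathbb{E}\|e_Y^n\|^2\leq C\tau$, i.e.\ half the claimed order. The paper avoids this by expanding $X(t)-X(t_{n+1})=\int_{t}^{t_{n+1}}(-\Delta X(s)-U(s))\,ds-\int_{t}^{t_{n+1}}\sigma(s)\,dW_s$ and using that the test function $e_Y^n$ is $\mathcal{F}_{t_n}$-measurable, so the martingale part has vanishing expectation when paired with $e_Y^n$; only the drift part survives, and a Cauchy--Schwarz bound on the iterated time integral yields the genuine $O(\tau^2)$. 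The same issue arises in your forward step for the residual $\int_{t_n}^{t_{n+1}}\Pi_h^1(U(s)-U(t_n))\,ds$, which you attribute vaguely to ``regularity of $U$'': since $U=-\alpha^{-1}Y$ and $Y$ solves a BSPDE with a $Z\,dW$ term, $U$ is likewise only mean-square H\"older-$\tfrac12$, and the paper again must split the increment into drift and martingale parts via the adjoint equation and exploit conditional orthogonality against $e_X^n$ (absorbing the remainder into $\|e_X^{n+1}-e_X^n\|^2$) to reach second order. Without this adaptedness/orthogonality argument your plan does not deliver the stated $O(h^4+\tau^2)$ bound.
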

\begin{proof}
By triangle inequality and Lemma \ref{optimal_state}, it holds that
\begin{equation*}
\begin{aligned}
&\max_{0\leq n \leq N}\mathbb{E}\left[\|X(t_n)-X_{h\tau}^U(t_n)\|^2\right]+\max_{0\leq n\leq N}\mathbb{E}\left[\|Y(t_n)-Y_{h\tau}^U(t_n)\|^2\right]\\
&\leq 2\max_{0\leq n \leq N}\mathbb{E}\left[\|X(t_n)-X_{h}^U(t_n)\|^2\right]+2\max_{0\leq n \leq N}\mathbb{E}\left[\|X_{h}^U(t_n)-X_{h\tau}^U(t_n)\|^2\right]\\
&\quad+2\max_{0\leq n\leq N}\mathbb{E}\left[\|Y(t_n)-Y_{h}^U(t_n)\|^2\right]+2\max_{0\leq n\leq N}\mathbb{E}\left[\|Y_{h}^U(t_n)-Y_{h\tau}^U(t_n)\|^2\right]\\
&\leq C\left(h^4+\max_{0\leq n \leq N}\mathbb{E}\left[\|X_{h}^U(t_n)-X_{h\tau}^U(t_n)\|^2\right]+\max_{0\leq n\leq N}\mathbb{E}\left[\|Y_{h}^U(t_n)-Y_{h\tau}^U(t_n)\|^2\right]  \right).
\end{aligned}
\end{equation*}
Using the regularity assumptions of $X(t),Y(t)$ and employing the property of projection $\Pi_{h}^{1}$, Lemmas \ref{pro_estimate}, \ref{inver_ertimate}, \ref{optimal_state} again yields
\begin{align*}
&\tau\sum_{n=0}^{N-1}\mathbb{E}\left[\|\nabla\left(X(t_{n+1})-X_{h\tau}^U(t_{n+1})\right)\|^2+\|\nabla\left(Y(t_{n})-Y_{h\tau}^U(t_n)\right)\|^2\right]\\
&\leq 3\tau\sum_{n=0}^{N-1}\mathbb{E}\left[\|\nabla\left(X(t_{n+1})-\Pi_{h}^{1}X(t_{n+1})\right)\|^2+\|\nabla\left(\Pi_{h}^{1}X(t_{n+1})-X_{h}^U(t_{n+1})\right)\|^2\right]\\
&\quad+ 3\tau\sum_{n=0}^{N-1}\mathbb{E}\left[\|\nabla\left(Y(t_{n})-\Pi_{h}^{1}Y(t_n)\right)\|^2+\|\nabla\left(\Pi_{h}^{1}Y(t_n)-Y_{h}^U(t_n)\right)\|^2\right]\\
&\quad+
3\tau\sum_{n=0}^{N-1}\mathbb{E}\left[\|\nabla\left(X_{h}^U(t_{n+1})-X_{h\tau}^U(t_{n+1})\right)\|^2+\|\nabla\left(Y_{h}^U(t_n)-Y_{h\tau}^U(t_n)\right)\|^2\right]\\
&\leq Ch^{2}+C\tau\sum_{n=0}^{N-1}h^{-2}\mathbb{E}\left[\|X(t_{n+1})-X_{h}^{U}(t_{n+1})\|^2+\|Y(t_{n})-Y_{h}^{U}(t_{n})\|^2\right]\\
&\quad+3\tau\sum_{n=0}^{N-1}\mathbb{E}\left[\|\nabla\left(X_{h}^U(t_{n+1})-X_{h\tau}^U(t_{n+1})\right)\|^2+\|\nabla\left(Y_{h}^U(t_n)-Y_{h\tau}^U(t_n)\right)\|^2\right]\\
&\leq C\left(h^2+\tau\sum_{n=0}^{N-1}\mathbb{E}\left[\|\nabla\left(X_{h}^U(t_{n+1})-X_{h\tau}^U(t_{n+1})\right)\|^2+\|\nabla\left(Y_{h}^U(t_n)-Y_{h\tau}^U(t_n)\right)\|^2\right]\right).
\end{align*}
The proof is completed in two steps: the first step gives
\begin{equation*}\label{fbspde_step1}
\begin{aligned}
\max_{0\leq n \leq N}\mathbb{E}\left[\|X_{h}^U(t_n)-X_{h\tau}^U(t_n)\|^2\right] +\tau\sum_{n=0}^{N-1}\mathbb{E}\left[\|\nabla\left(X_{h}^U(t_{n+1})-X_{h\tau}^U(t_{n+1})\right)\|^2\right]\leq C\tau^2,
\end{aligned}
\end{equation*}
and the second step is to prove
\begin{equation}\label{fbspde_step2}
\begin{aligned}
\max_{0\leq n\leq N}\mathbb{E}\left[\|Y_{h}^U(t_n)-Y_{h\tau}^U(t_n)\|^2\right] +\tau\sum_{n=0}^{N-1}\mathbb{E}\left[\|\nabla\left(Y_{h}^U(t_n)-Y_{h\tau}^U(t_n)\right)\|^2\right]\leq C\tau^2.
\end{aligned}
\end{equation}

\textbf{Step 1.} Let $e_{X}^{n}:=X_{h}^{U}(t_n)-X_{h\tau}^{U}(t_n),n=0,1,...,N$ and subtracting the FSPDE in (\ref{fullyfu_first_continuous}) from FSPDE in (\ref{semifu_first_continuous}) yields
\begin{equation*}
\begin{aligned}
&e_{X}^{n+1}-e_{X}^{n}-\int_{t_n}^{t_{n+1}} \Delta_he_{X}^{n+1}dt\\
&=\int_{t_n}^{t_{n+1}}\Delta_h\left(X_{h}^{U}(t)-X_{h}^{U}(t_{n+1})\right)dt
+\int_{t_n}^{t_{n+1}}\Pi_{h}^{1}\left(U(t)-U(t_n)\right)dt\\
&\quad+\int_{t_n}^{t_{n+1}}\Pi_{h}^{1}\left(\sigma(t)-\sigma(t_n)\right)dW_t.
\end{aligned}
\end{equation*}
For a fixed $\omega\in\Omega$, performing the inner product of both sides of the above equation with $e_{X}^{n+1}$ on $\mathcal{D}$ and using the fact that 
\begin{equation*}
\begin{aligned}
(e_{X}^{n+1},e_{X}^{n+1})-(e_{X}^{n},e_{X}^{n+1})=\frac{1}{2}\left(\|e_{X}^{n+1}\|^2-\|e_{X}^{n}\|^2+\|e_{X}^{n+1}-e_{X}^{n}\|^2\right)
\end{aligned}
\end{equation*}
yields
\begin{equation}\label{estimate_eX0}
\begin{aligned}
&\frac{1}{2}\mathbb{E}\left[\|e_{X}^{n+1}\|^2-\|e_{X}^{n}\|^2+\|e_{X}^{n+1}-e_{X}^{n}\|^2+2\tau\|\nabla e_{X}^{n+1}\|^2\right]\\
&=\mathbb{E}\left[\int_{\mathcal{D}}\int_{t_n}^{t_{n+1}}\nabla\left(X_{h}^{U}(t_{n+1})-X_{h}^{U}(t)\right)\cdot\nabla e_{X}^{n+1}dt dx\right]\\
&\quad+\mathbb{E}\left[\int_{\mathcal{D}}\int_{t_n}^{t_{n+1}}\left(U(t)-U(t_n)\right)dte_{X}^{n+1}dx\right]\\
&\quad+\mathbb{E}\left[\int_{\mathcal{D}}\int_{t_n}^{t_{n+1}}\sigma(t)-\sigma(t_n)dW_te_{X}^{n+1}  dx\right]\\
&=T_{11}+T_{12}+T_{13}.
\end{aligned}
\end{equation}
Like the analysis in Lemma \ref{regularity}, the solution $X_h^U(t)$ of FSPDE in (\ref{semifu_first_continuous}) belongs to $L^2_{\mathbb{F}}\left(\Omega;C\left(0,T;H_0^1(\mathcal{D})\cap H^2(\mathcal{D}) \right)\cap L^2\left(0,T;H_{0}^{1}(\mathcal{D})\cap H^{3}(\mathcal{D})\right)\right)$ and for $t\in[t_n,t_{n+1}]$ it follows that
\begin{equation*}
\begin{aligned}
\nabla X_{h}^{U}(t_{n+1})-\nabla X_{h}^{U}(t)=\int_{t}^{t_{n+1}}\nabla \Delta_hX_{h}^{U}(s)+\nabla \Pi_{h}^{1}U(s)ds+\int_{t}^{t_{n+1}}\nabla\Pi_{h}^{1}\sigma(s)dW_s,
\end{aligned}
\end{equation*}
which implies that
\begin{equation}\label{estimate_eX1}
\begin{aligned}
&\nabla \left(X_{h}^{U}(t_{n+1})-X_{h}^{U}(t)\right)\cdot\nabla e_{X}^{n+1}\\
&=\int_{t}^{t_{n+1}}\nabla \Delta_hX_{h}^{U}(s)+\nabla \Pi_{h}^{1}U(s)ds\cdot\nabla e_{X}^{n+1}+\int_{t}^{t_{n+1}}\nabla\Pi_{h}^{1}\sigma(s)dW_s\cdot\nabla e_{X}^{n+1}.
\end{aligned}
\end{equation}
By (\ref{estimate_eX1}), the Cauchy-Schwarz inequality, Young's inequality and the independence, the following estimate for $T_{11}$ holds:
\begin{align*}
T_{11}&=\mathbb{E}\left[\int_{\mathcal{D}}\int_{t_n}^{t_{n+1}}\int_{t}^{t_{n+1}}\nabla \Delta_hX_{h}^{U}(s)+\nabla \Pi_{h}^{1}U(s)dsdt\cdot\nabla e_{X}^{n+1}dx\right]\\
&\quad+\mathbb{E}\left[\int_{\mathcal{D}}\int_{t_n}^{t_{n+1}}\int_{t}^{t_{n+1}}\nabla \Pi_{h}^{1}\sigma(s)dW_sdt\cdot\nabla\left(e_{X}^{n+1}-e_{X}^{n}\right)dx\right]\\
&\leq \frac{1}{2}\tau\mathbb{E}\left[\int_{t_n}^{t_{n+1}}\int_{t}^{t_{n+1}}\left\|\nabla \Delta_hX_{h}^{U}(s)+\nabla \Pi_{h}^{1}U(s)\right\|^2dsdt\right]+\frac{\tau}{2}\mathbb{E}\left[\|\nabla e_{X}^{n+1}\|^2\right]\\
&\quad+2\tau\mathbb{E}\left[\int_{t_n}^{t_{n+1}}\int_{t}^{t_{n+1}}\left\|\Delta_h \Pi_{h}^{1}\sigma(s)\right\|^2dsdt   \right]
+\frac{1}{8}\mathbb{E}\left[\|e_{X}^{n+1}-e_{X}^{n}\|^2\right]\\
&\leq\tau^{2}\mathbb{E}\left[\int_{t_n}^{t_{n+1}}\|\nabla \Delta_hX_{h}^{U}(t)\|^2+\|\nabla \Pi_{h}^{1}U(t)\|^2  dt\right]+\frac{\tau}{2}\mathbb{E}\left[\|\nabla e_{X}^{n+1}\|^2\right]\\
&\quad+2\tau^2\mathbb{E}\left[\int_{t_n}^{t_{n+1}}\|\Delta_h \Pi_{h}^{1}\sigma(t)\|^2dt\right]
+\frac{1}{8}\mathbb{E}\left[\|e_{X}^{n+1}-e_{X}^{n}\|^2\right].
\end{align*}
Analogously, term $T_{13}$ can be estimated as
\begin{equation*}
\begin{aligned}
T_{13}&=\mathbb{E}\left[\int_{\mathcal{D}}\int_{t_n}^{t_{n+1}}\sigma(t)-\sigma(t_n)dW_te_{X}^{n+1}  dx\right]\\
&\leq2\mathbb{E}\left[\int_{t_n}^{t_{n+1}}\|\sigma(t)-\sigma(t_n)\|^2dt\right]
+\frac{1}{8}\mathbb{E}\left[\|e_{X}^{n+1}-e_{X}^{n}\|^2\right].
\end{aligned}
\end{equation*}
Using the definition of BSPDE in (\ref{adjoint}) and $U(t)=-\frac{1}{\alpha}Y(t)$ in (\ref{first_continuous}) gives
\begin{equation*}
\begin{aligned}
T_{12}&=\frac{1}{\alpha}\mathbb{E}\left[\int_{\mathcal{D}}\int_{t_n}^{t_{n+1}}\left(Y(t_n)-Y(t)\right)e_{X}^{n+1}dtdx\right]\\
&=\frac{1}{\alpha}\mathbb{E}\left[\int_{\mathcal{D}}\int_{t_n}^{t_{n+1}}\int_{t_n}^{t}\Delta Y(s)+X(s)-X_d(s)+\mu dsdte_{X}^{n+1}dx\right]\\
&\quad-\frac{1}{\alpha}\mathbb{E}\left[\int_{\mathcal{D}}\int_{t_n}^{t_{n+1}}\int_{t_n}^{t}Z(s)dW_sdte_{X}^{n+1}dx   \right]\\
&\leq\frac{1}{\alpha}\left(\frac{4\tau^2}{\alpha}\mathbb{E}\left[\int_{t_n}^{t_{n+1}}\|\Delta Y(t)+X(t)-X_d(t)+\mu\|^2dt\right]+\frac{\alpha}{8}\mathbb{E}\left[\|e_{X}^{n+1}-e_{X}^{n}\|^2\right] \right.\\
&\left.\quad+\frac{\tau\alpha}{8}\mathbb{E}\left[\|e_{X}^{n+1}-e_{X}^{n}\|^2+\|e_{X}^{n}\|^2\right] 
+\frac{2\tau^2}{\alpha}\mathbb{E}\left[\int_{t_n}^{t_{n+1}}\|Z(t)\|^2dt\right]\right).
\end{aligned}
\end{equation*}
Combining the estimates of $T_{11},T_{12}$ and $T_{13}$, (\ref{estimate_eX0}) can be bounded as
\begin{align*}
&\frac{1}{2}\mathbb{E}\left[\|e_{X}^{n+1}\|^2-\|e_{X}^{n}\|^2+\|e_{X}^{n+1}-e_{X}^{n}\|^2+2\tau\|\nabla e_{X}^{n+1}\|^2\right]\\
&\leq\tau^{2}\mathbb{E}\left[\int_{t_n}^{t_{n+1}}\|\nabla \Delta_hX_{h}^{U}(t)\|^2+\|\nabla \Pi_{h}^{1}U(t)\|^2  dt\right]+\frac{\tau}{2}\mathbb{E}\left[\|\nabla e_{X}^{n+1}\|^2\right]\\
&\quad+2\tau^2\mathbb{E}\left[\int_{t_n}^{t_{n+1}}\|\Delta_h \Pi_{h}^{1}\sigma(t)\|^2dt\right]
+\frac{3+\tau}{8}\mathbb{E}\left[\|e_{X}^{n+1}-e_{X}^{n}\|^2\right]\\
&\quad+\frac{4\tau^2}{\alpha^2}\mathbb{E}\left[\int_{t_n}^{t_{n+1}}\|\Delta Y(t)+X(t)-X_d(t)+\mu\|^2dt\right]
+\frac{\tau}{8}\mathbb{E}\left[\|e_{X}^{n}\|^2\right] \\
&\quad+\frac{2\tau^2}{\alpha^2}\mathbb{E}\left[\int_{t_n}^{t_{n+1}}\|Z(t)\|^2dt\right]
+2\mathbb{E}\left[\int_{t_n}^{t_{n+1}}\|\sigma(t)-\sigma(t_n)\|^2dt\right],
\end{align*}
which implies that 
\begin{equation}
\begin{aligned}\label{estimate_eX2}
&\mathbb{E}\left[\|e_{X}^{n+1}\|^2\right]+\tau\mathbb{E}\left[\|\nabla e_{X}^{n+1}\|^2\right]\\
&\leq\left(4+\frac{8}{\alpha^2}\right)\left(\tau^2\mathbb{E}\left[\int_{t_n}^{t_{n+1}}\|\nabla \Delta_hX_{h}^{U}(t)\|^2+\|\nabla \Pi_{h}^{1}U(t)\|^2  dt\right] \right. \\
&\left.\quad+\tau^2\mathbb{E}\left[\int_{t_n}^{t_{n+1}}\|\Delta_h \Pi_{h}^{1}\sigma(t)\|^2+\|\Delta Y(t)+X(t)-X_d(t)+\mu\|^2dt\right] \right. \\
&\left.\quad+\tau^2\mathbb{E}\left[\int_{t_n}^{t_{n+1}}\|Z(t)\|^2dt\right]+\mathbb{E}\left[\int_{t_n}^{t_{n+1}}\|\sigma(t)-\sigma(t_n)\|^2dt\right]\right) \\ 
&\quad+(1+\tau)\mathbb{E}\left[\|e_{X}^{n}\|^2\right].
\end{aligned}
\end{equation}
Subsequently, the discrete Gronwall inequality leads to
\begin{equation}\label{estimate_eX3}
\begin{aligned}
&\max_{0\leq n\leq N}\mathbb{E}\left[\|e_{X}^{n}\|^2\right]\\
&\leq\left(4+\frac{8}{\alpha^2}\right)e^{T}\left(\tau^2\mathbb{E}\left[\int_{0}^{T}\|\nabla \Delta_hX_{h}^{U}(t)\|^2+\|\nabla \Pi_{h}^{1}U(t)\|^2+\|\Delta_h \Pi_{h}^{1}\sigma(t)\|^2dt\right] \right.\\
&\left.\quad+\tau^2\mathbb{E}\left[\int_{0}^{T}\|\Delta Y(t)+X(t)-X_d(t)+\mu\|^2+\|Z(t)\|^2dt\right]\right.\\
&\left.\quad+\sum_{n=0}^{N-1}\mathbb{E}\left[\int_{t_n}^{t_{n+1}}\|\sigma(t)-\sigma(t_n)\|^2dt\right]\right).
\end{aligned}
\end{equation}
Applying It\^{o} formula for $\|\Delta_hX_h^{U}(t)\|^2$ gives
\begin{equation*}
\begin{aligned}\label{estimate_eX4}
\int_{0}^{T}\mathbb{E}\left[\|\nabla\Delta_h X_{h}^{U}(t)\|^2\right]dt
\leq\|\Delta_h\Pi_{h}^{1}X_0\|^2+\int_{0}^{T}\mathbb{E}\left[\|\Delta_h\Pi_{h}^{1}\sigma(t)\|^2+\|\nabla\Pi_{h}^{1}U(t)\|^2\right]dt.
\end{aligned}
\end{equation*}
Based on the above inequality, (\ref{sigma_xingzhi}), (\ref{estimate_eX3}) and the regularities of all variables, it follows that
\begin{equation*}
\begin{aligned}
&\max_{0\leq n\leq N}\mathbb{E}\left[\|e_{X}^{n}\|^2\right]\\
&\leq\left(8+\frac{16}{\alpha^2}\right)e^{T}\left(\tau^2\|\Delta_h\Pi_{h}^{1}X_0\|^2+\tau^2\mathbb{E}\left[\int_{0}^{T}\|\nabla \Pi_{h}^{1}U(t)\|^2+\|\Delta_h \Pi_{h}^{1}\sigma(t)\|^2 \right.\right.\\
&\left.\left.\quad+\|\Delta Y(t)+X(t)-X_d(t)+\mu\|^2+\|Z(t)\|^2dt\right]
+\sum_{n=0}^{N-1}\mathbb{E}\left[\int_{t_n}^{t_{n+1}}\|\sigma(t)-\sigma(t_n)\|^2dt\right]\right)\\
&\leq C\tau^2.
\end{aligned}
\end{equation*}
Then, summing up over all steps of (\ref{estimate_eX2}) yields
\begin{align*}
&\tau\sum_{n=0}^{N-1}\mathbb{E}\left[\|\nabla e_{X}^{n+1}\|^2\right]\\
&\leq\left(4+\frac{8}{\alpha^2}\right)\left(\tau^2\mathbb{E}\left[\int_{0}^{T}\|\nabla \Delta_hX_{h}^{U}(t)\|^2+\|\nabla \Pi_{h}^{1}U(t)\|^2+\|\Delta_h \Pi_{h}^{1}\sigma(t)\|^2 \right.\right.\\
&\left.\left.\quad+\|\Delta Y(t)+X(t)-X_d(t)+\mu\|^2+\|Z(t)\|^2dt\right]
+\sum_{n=0}^{N-1}\mathbb{E}\left[\int_{t_n}^{t_{n+1}}\|\sigma(t)-\sigma(t_n)\|^2dt\right]\right)\\
&\quad+\max_{0\leq n \leq N}\mathbb{E}\left[\|e_{X}^{n}\|^2\right]\\
&\leq C\tau^2.
\end{align*}
Combining the estimates of $\max\limits_{0\leq n\leq N}\mathbb{E}\left[\|e_{X}^{n}\|^2\right]$ and $\tau\sum\limits_{n=0}^{N-1}\mathbb{E}\left[\|\nabla e_{X}^{n+1}\|^2\right]$ yields the result.

\textbf{Step 2.} The proof of (\ref{fbspde_step2}) follows a similar process to that in \textbf{Step 1}. Here, we present a simplified proof for clarity. Let $e_{Y}^{n}:=Y_{h}^{U}(t_n)-\hat{Y}_{h\tau}(t_n)$ and it holds that
\begin{equation*}
\begin{aligned}
&e_{Y}^{n}-e_{Y}^{n+1}-\int_{t_n}^{t_{n+1}}\Delta_he_{Y}^{n}dt\\
&=\int_{t_n}^{t_{n+1}}\Delta_h\left(Y_h^{U}(t)-Y_h^{U}(t_n)\right)+
\Pi_{h}^{1}\left(X(t)-X_d(t)-X(t_{n+1})+X_d(t_{n+1})\right)dt\\
&\quad+\int_{t_n}^{t_{n+1}}\hat{Z}_{h\tau}(t)-Z_h^{U}(t)dW_t.
\end{aligned}
\end{equation*}
For a fixed $\omega\in\Omega$, testing with the admissible $e_{Y}^{n}(\omega)$, taking expectation and Young's inequality leads to
\begin{equation*}
\begin{aligned}
&\frac{1}{2}\mathbb{E}\left[\|e_{Y}^{n}\|^2-\|e_{Y}^{n+1}\|^2+\|e_{Y}^{n}-e_{Y}^{n+1}\|^2+2\tau\|\nabla e_{Y}^{n}\|^2 \right]\\
&=\mathbb{E}\left[\int_{\mathcal{D}}\int_{t_n}^{t_{n+1}}\nabla\left(Y_{h}^{U}(t_n)-Y_{h}^{U}(t)\right)\cdot\nabla e_{Y}^{n}dtdx\right]+\mathbb{E}\left[\int_{\mathcal{D}}\int_{t_n}^{t_{n+1}}X(t)-X(t_{n+1})dte_{Y}^{n}dx\right]\\
&\quad+\mathbb{E}\left[\int_{\mathcal{D}}\int_{t_n}^{t_{n+1}}X_d(t_{n+1})-X_d(t)dte_{Y}^{n}dx\right]\\
&=\mathbb{E}\left[\int_{\mathcal{D}}\int_{t_n}^{t_{n+1}}\int_{t_n}^{t}\nabla\Delta_hY_{h}^{U}(s)+\nabla\Pi_{h}^{1}\left(X(s)-X_d(s)+\mu\right)dsdt\cdot\nabla e_{Y}^{n}dx\right]\\
&\quad+\mathbb{E}\left[\int_{\mathcal{D}}\int_{t_{n}}^{t_{n+1}}\int_{t}^{t_{n+1}}-\Delta X(s)-U(s)dsdte_{Y}^{n}dx\right]\\
&\quad+\mathbb{E}\left[\int_{\mathcal{D}}\int_{t_n}^{t_{n+1}}X_d(t_{n+1})-X_d(t)dte_{Y}^{n}dx\right]\\
&=T_{21}+T_{22}+T_{23}.
\end{aligned}
\end{equation*}
By the Cauchy-Schwarz and Young's inequalities, we have
\begin{align*}
T_{21}&\leq\tau^2\mathbb{E}\left[\int_{t_n}^{t_{n+1}}\|\nabla\Delta_hY_{h}^{U}(t)\|^2+\|\nabla\Pi_{h}^{1}\left(X(t)-X_d(t)+\mu\right)\|^2dt\right]+\frac{\tau}{2}\mathbb{E}\left[\|\nabla e_{Y}^{n}\|^2\right],\\
T_{22}&\leq 2\tau^2\mathbb{E}\left[\int_{t_n}^{t_{n+1}}\|\Delta X(t)+U(t)\|^2dt\right]+\frac{\tau}{4}\mathbb{E}\left[\|e_{Y}^{n}-e_{Y}^{n+1}\|^2+\|e_{Y}^{n+1}\|^2\right],\\
T_{23}&\leq2\mathbb{E}\left[\int_{t_n}^{t_{n+1}}\|X_d(t_{n+1})-X_d(t)\|^2dt\right]
+\frac{\tau}{4}\mathbb{E}\left[\|e_{Y}^{n}-e_{Y}^{n+1}\|^2+\|e_{Y}^{n+1}\|^2\right],
\end{align*}
which implies that 
\begin{equation}\label{estimate_eY0}
\begin{aligned}
&\mathbb{E}\left[\|e_{Y}^{n}\|^2\right]+\tau\mathbb{E}\left[\|\nabla e_{Y}^{n}\|^2\right]\\
&\leq 2\tau^2\mathbb{E}\left[\int_{t_n}^{t_{n+1}}\|\nabla\Delta_hY_{h}^{U}(t)\|^2+\|\nabla\Pi_{h}^{1}\left(X(t)-X_d(t)+\mu\right)\|^2dt\right]\\
&\quad+4\tau^2\mathbb{E}\left[\int_{t_n}^{t_{n+1}}\|\Delta X(t)+U(t)\|^2dt\right]+4\mathbb{E}\left[\int_{t_n}^{t_{n+1}}\|X_d(t_{n+1})-X_d(t)\|^2dt\right]\\
&\quad+(1+\tau)\mathbb{E}\left[\|e_{Y}^{n+1}\|^2\right].
\end{aligned}
\end{equation}
Note that applying It\^{o} formula for $\|\Delta_hY_h^{U}(t)\|^2$ yields
\begin{equation}\label{estimate_eY1}
\begin{aligned}
\int_{0}^{T}\mathbb{E}\left[\|\nabla\Delta_h Y_{h}^{U}(t)\|^2\right]dt
\leq\int_{0}^{T}\mathbb{E}\left[\|\nabla\Pi_{h}^{1}\left(X(t)-X_d(t)+\mu\right)\|^2 \right]dt.
\end{aligned}
\end{equation}
Performing the analysis like \textbf{Step 1} and combining (\ref{estimate_eY0}), (\ref{estimate_eY1}), (\ref{Fuzhu2}) as well as (\ref{sigma_xingzhi}) yields the result (\ref{fbspde_step2}).
\end{proof}

\subsection{Optimal error estimates of control problem}\label{error_control}
In this subsection, the optimal error estimates for SOCP are deduced. Previously, for the FBSPDEs in fully discrete first-order condition (\ref{fully_first_continuous}) and auxiliary problem (\ref{fullyfu_first_continuous}), the following estimates are given.
\begin{theorem}\label{full_fullfu_XY}
Let $\left(X_{h\tau}(t),Y_{h\tau}(t),Z_{h\tau}(t)\right)$ and $\left(X_{h\tau}^U(t),Y_{h\tau}^U(t),Z_{h\tau}^U(t)\right)$ be the solutions of FBSPDEs in (\ref{fully_first_continuous}) and (\ref{fullyfu_first_continuous}), respectively, then the following estimates hold:
\begin{subequations}
\begin{align}
&\max_{0\leq n\leq N}\mathbb{E}\left[\|X_{h\tau}(t_{n})-X_{h\tau}^U(t_{n})\|^2\right]+\tau\sum_{n=0}^{N-1}\mathbb{E}\left[\|\nabla\left(X_{h\tau}(t_{n+1})-X_{h\tau}^U(t_{n+1})\right)\|^2\right] \label{full_fullfu_XY1}\\
&\leq 2\tau\sum_{n=0}^{N-1}\mathbb{E}\left[\|U_{h\tau}(t_n)-U(t_n)\|^2\right], \nonumber\\
&\max_{0\leq n\leq N}\mathbb{E}\left[\|Y_{h\tau}(t_{n})-Y_{h\tau}^U(t_{n})\|^2\right]+\tau\sum_{n=0}^{N-1}\mathbb{E}\left[\|\nabla\left(Y_{h\tau}(t_{n})-Y_{h\tau}^U(t_{n})\right)\|^2\right] \label{full_fullfu_XY2}\\
&\leq C\left(\tau\sum_{n=0}^{N-1}\mathbb{E}\left[\|U_{h\tau}(t_{n})-U(t_{n})\|^2+\|X_{h\tau}^{U}(t_{n+1})-X(t_{n+1})\|^2 \right]+|\mu_{h\tau}-\mu|^2\right). \nonumber
\end{align}
\end{subequations}
\end{theorem}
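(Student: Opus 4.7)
The plan is to treat the two bounds (\ref{full_fullfu_XY1}) and (\ref{full_fullfu_XY2}) as a forward and a backward discrete stability estimate, patterned after the arguments in Theorem \ref{fbspde_exact_fu} but now with the data perturbations sitting in the source term rather than as time/space truncation errors. I would do the forward bound first, since the backward bound uses it.

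For (\ref{full_fullfu_XY1}), set $e_X^n := X_{h\tau}(t_n) - X_{h\tau}^U(t_n)$ and subtract the forward equations in (\ref{fully_first_continuous}) and (\ref{fullyfu_first_continuous}). The noise terms coincide and therefore cancel, giving the purely deterministic recursion
\begin{equation*}
e_X^{n+1} - e_X^n = \tau\bigl[\Delta_h e_X^{n+1} + U_{h\tau}(t_n) - \Pi_h^1 U(t_n)\bigr], \qquad e_X^0 = 0.
\end{equation*}
Take the $L^2(\mathcal{D})$ inner product with $e_X^{n+1}$, use the identity
$(a-b,a)=\tfrac12(\|a\|^2-\|b\|^2+\|a-b\|^2)$ and $-(\Delta_h e_X^{n+1},e_X^{n+1})=\|\nabla e_X^{n+1}\|^2$. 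Because $e_X^{n+1}\in\mathbb{V}_h^1$, the defining property of $\Pi_h^1$ yields $(U_{h\tau}(t_n)-\Pi_h^1 U(t_n),e_X^{n+1})=(U_{h\tau}(t_n)-U(t_n),e_X^{n+1})$, which is the key step that produces the norm $\|U_{h\tau}(t_n)-U(t_n)\|$ on the right. Apply Cauchy--Schwarz plus Young's inequality, take expectation, sum over $n$, and invoke the discrete Gronwall inequality; the constant $2$ in (\ref{full_fullfu_XY1}) is recovered by tracking the Young splitting carefully.

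For (\ref{full_fullfu_XY2}), set $e_Y^n := Y_{h\tau}(t_n)-Y_{h\tau}^U(t_n)$ and $e_Z^n := Z_{h\tau}(t_n)-Z_{h\tau}^U(t_n)$. Subtracting the backward recursions gives
\begin{equation*}
e_Y^n = \mathbb{E}\bigl[\,e_Y^{n+1}+\tau\Delta_h e_Y^n+\tau\Pi_h^1\bigl((X_{h\tau}(t_{n+1})-X(t_{n+1}))+(\mu_{h\tau}-\mu)\bigr)\,\big|\,\mathcal{F}_{t_n}\bigr],
\end{equation*}
together with the analogous relation for $e_Z^n$ (in which the constant $\mu_{h\tau}-\mu$ drops out because $\mathbb{E}[\Delta W_{n+1}\mid\mathcal{F}_{t_n}]=0$). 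I would then imitate Step 2 of Theorem \ref{fbspde_exact_fu}: test the identity above with $e_Y^n$, use the $L^2$-orthogonality produced by conditional expectation to introduce the discrete martingale term $\tau\|e_Z^n\|^2$, apply Young's inequality, take expectation, and sum backward from $n=N$ (where $e_Y^N=0$). The source term is then split via
\begin{equation*}
\|X_{h\tau}(t_{n+1})-X(t_{n+1})\|^2 \le 2\|X_{h\tau}(t_{n+1})-X_{h\tau}^U(t_{n+1})\|^2 + 2\|X_{h\tau}^U(t_{n+1})-X(t_{n+1})\|^2,
\end{equation*}
and the first piece is absorbed through (\ref{full_fullfu_XY1}) already proved, while $|\mu_{h\tau}-\mu|^2$ contributes independently. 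The discrete (backward) Gronwall inequality closes the estimate.

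The main technical point I expect to watch is the backward step: one has to correctly handle that $\Delta_h e_Y^n$ appears implicitly at time $t_n$ (so the monotonicity gives $\tau\|\nabla e_Y^n\|^2$ with the proper sign), and that the martingale increment $e_Z^n\Delta W_{n+1}$ must be extracted from the difference $e_Y^{n+1}-\mathbb{E}[e_Y^{n+1}\mid\mathcal{F}_{t_n}]$ so that $\tau\|e_Z^n\|^2$ appears on the left with the right constant. Once this discrete Itô-type identity is correctly set up, the remainder is a routine Young/Gronwall combination, and the final constant $C$ depends only on $T$ and $\alpha$ (through the coupling $U=-Y/\alpha$ is not needed here since the right-hand side of (\ref{full_fullfu_XY2}) already contains $\|U_{h\tau}-U\|$ and $\|X_{h\tau}^U-X\|$ explicitly).
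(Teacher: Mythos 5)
Your proposal follows essentially the same route as the paper: subtract the two discrete systems, test the forward error equation with $e_X^{n+1}$ (the $\Pi_h^1$-orthogonality removing the projection exactly as you note), and for the backward part test the conditional-expectation recursion for $e_Y^n$ with $e_Y^n$ itself, split the source as $\|X_{h\tau}-X_{h\tau}^U\|^2+\|X_{h\tau}^U-X\|^2+|\mu_{h\tau}-\mu|^2$, absorb the first piece through the already-proved forward bound, and close with a backward discrete Gronwall argument; the extraction of a $\tau\|e_Z^n\|^2$ martingale term that you flag as delicate is in fact not needed, since the paper never estimates $Z$ here and works purely with the conditional-expectation form. The only small discrepancy is in (\ref{full_fullfu_XY1}): the paper gets the explicit constant $2$ by direct telescoping ($\|\theta_X^{n+1}\|\le\|\theta_X^{n}\|+\tau\|U_{h\tau}(t_n)-U(t_n)\|$, plus a second test with $\theta_X^{n+1}-\theta_X^{n}$ for the gradient sum) with no Gronwall step, whereas your Young-plus-Gronwall variant produces an exponential constant $e^{CT}$ rather than $2$ --- harmless for every later use of the theorem, but not literally the stated bound.
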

\begin{proof}
The difference between (\ref{fully_first_continuous}) and (\ref{fullyfu_first_continuous}) has the following weak form for any $\phi_h,\psi_h\in\mathbb{V}_h^1$:
\begin{equation}\label{lisan_fuzhu}
\left\{\begin{aligned}
&\left(X_{h\tau}(t_{n+1})-X_{h\tau}^U(t_{n+1})-\left(X_{h\tau}(t_{n})-X_{h\tau}^U(t_{n})\right), \phi_h\right)\\
&=\tau\left(\nabla X_{h\tau}^U(t_{n+1})-\nabla X_{h\tau}(t_{n+1}),\nabla\phi_h \right)
+\tau\left( U_{h\tau}(t_n)-U(t_n),\phi_h\right),\\
&X_{h\tau}(0)-X_{h\tau}^U(0)=0,\\
&\left(\mathbb{E}\left[Y_{h\tau}(t_{n+1})-Y_{h\tau}^U(t_{n+1}) |\mathcal{F}_{t_n}\right]-\left(Y_{h\tau}(t_{n})-Y_{h\tau}^U(t_{n})\right), \psi_h\right)\\
&=\tau\left(\nabla Y_{h\tau}(t_{n})-\nabla Y_{h\tau}^U(t_{n}),\nabla\psi_h\right)+\tau\left(\mathbb{E}\left[X(t_{n+1})+\mu-X_{h\tau}(t_{n+1})-\mu_{h\tau}|\mathcal{F}_{t_n} \right],\psi_h\right),\\
&Y_{h\tau}(T)-Y_{h\tau}^U(T)=0.
\end{aligned}\right.
\end{equation}
We set $\theta_{X}^{n+1}:=X_{h\tau}(t_{n+1})-X_{h\tau}^U(t_{n+1})$ and $\phi_h=\theta_{X}^{n+1}$ in (\ref{lisan_fuzhu}) for a fixed $\omega\in\Omega$. By the Cauchy-Schwarz inequality it holds that
\begin{equation*}
\begin{aligned}
\|\theta_{X}^{n+1}\|^2+\tau\|\nabla\theta_{X}^{n+1}\|^2
\leq \|\theta_{X}^{n+1}\|\|\theta_{X}^{n}\|+\tau\|U_{h\tau}(t_n)-U(t_n)\| \|\theta_{X}^{n+1}\|,
\end{aligned}
\end{equation*}
which implies that
\begin{equation*}
\begin{aligned}
\|\theta_{X}^{n+1}\|&\leq \|\theta_{X}^{n}\|+\tau\|U_{h\tau}(t_n)-U(t_n)\|
\leq \sum_{j=0}^{n}\tau \|U_{h\tau}(t_j)-U(t_j)\|.
\end{aligned}
\end{equation*}
Therefore, we have
\begin{equation}\label{full_fullfu_X}
\begin{aligned}
\max_{0\leq n\leq N}\mathbb{E}\left[\|X_{h\tau}(t_{n})-X_{h\tau}^U(t_{n})\|^2\right]\leq\tau\sum_{n=0}^{N-1}\mathbb{E}\left[\|U_{h\tau}(t_n)-U(t_n)\|^2 \right] .
\end{aligned}
\end{equation}
Let $\phi_h=\theta_{X}^{n+1}-\theta_{X}^{n}$, it follows that
\begin{equation*}
\begin{aligned}
\|\theta_{X}^{n+1}-\theta_{X}^{n}\|^2+\tau(\nabla\theta_{X}^{n+1},\nabla\theta_{X}^{n+1}-\nabla\theta_{X}^{n} )=\tau(U_{h\tau}(t_n)-U(t_n),\theta_{X}^{n+1}-\theta_{X}^{n}).
\end{aligned}
\end{equation*}
Using the fact that 
\begin{equation*}
\begin{aligned}
(\nabla\theta_{X}^{n+1},\nabla\theta_{X}^{n+1}-\nabla\theta_{X}^{n})=\frac{1}{2}\left[\|\nabla\theta_{X}^{n+1} \|^2 -\|\nabla\theta_{X}^{n}\|^2+\|\nabla\theta_{X}^{n+1}-\nabla\theta_{X}^{n}\|^2 \right]
\end{aligned}
\end{equation*}
yields
\begin{equation*}
\begin{aligned}
&\|\theta_{X}^{n+1}-\theta_{X}^{n}\|^2+\frac{\tau}{2}\left[\|\nabla\theta_{X}^{n+1} \|^2 -\|\nabla\theta_{X}^{n}\|^2+\|\nabla\theta_{X}^{n+1}-\nabla\theta_{X}^{n}\|^2 \right]\\
&\leq \frac{\tau^2}{4}\|U_{h\tau}(t_n)-U(t_n)\|^2+\|\theta_{X}^{n+1}-\theta_{X}^{n}\|^2,
\end{aligned}
\end{equation*}
which implies that 
\begin{equation*}
\begin{aligned}
\|\nabla\theta_{X}^{n+1}\|^2\leq \|\nabla\theta_{X}^{n}\|^2+\frac{\tau}{2}\|U_{h\tau}(t_n)-U(t_n)\|^2\leq \sum_{j=0}^{n}\frac{\tau}{2} \|U_{h\tau}(t_j)-U(t_j)\|^2,
\end{aligned}
\end{equation*}
i.e.,
\begin{equation}\label{full_fullfu_X1}
\begin{aligned}
\tau\sum_{n=0}^{N-1}\mathbb{E}\left[\|\nabla\left(X_{h\tau}(t_{n+1})-X_{h\tau}^U(t_{n+1})\right)\|^2   \right]\leq\tau\sum_{n=0}^{N-1}\mathbb{E}\left[\|U_{h\tau}(t_n)-U(t_n)\|^2\right].
\end{aligned}
\end{equation}
Combining (\ref{full_fullfu_X}) and (\ref{full_fullfu_X1}) gives the result (\ref{full_fullfu_XY1}).

We define $\eta_{Y}^{n+1}:=Y_{h\tau}(t_{n+1})-Y_{h\tau}^U(t_{n+1})$ and set $\psi_h=\eta_{Y}^{n}$ in (\ref{lisan_fuzhu}), leading to
\begin{equation*}
\begin{aligned}
&\mathbb{E}\left[(\eta_{Y}^{n+1}-\eta_{Y}^{n},\eta_{Y}^{n})|\mathcal{F}_{t_n} \right]\\
&=\tau(\nabla\eta_{Y}^{n},\nabla\eta_{Y}^{n})
+\tau\mathbb{E}\left[\left(X(t_{n+1})+\mu-X_{h\tau}(t_{n+1})-\mu_{h\tau},\eta_{Y}^{n}\right)|\mathcal{F}_{t_n} \right].
\end{aligned}
\end{equation*}
By the fact that $\left(\eta_{Y}^{n+1}-\eta_{Y}^{n},\eta_{Y}^{n}\right)=\frac{1}{2}\left[\|\eta_{Y}^{n+1} \|^2 -\|\eta_{Y}^{n}\|^2-\|\eta_{Y}^{n+1}-\eta_{Y}^{n}\|^2 \right]$,
it holds that
\begin{align*}
&\frac{1}{2}\mathbb{E}\left[2\tau\|\nabla\eta_{Y}^n\|^2+\|\eta_{Y}^n\|^2+\|\eta_{Y}^{n+1}-\eta_{Y}^{n}\|^2 |\mathcal{F}_{t_n} \right]\\
&=\tau\mathbb{E}\left[\left(X_{h\tau}(t_{n+1})+\mu_{h\tau}-X(t_{n+1})-\mu,\eta_{Y}^{n}\right)|\mathcal{F}_{t_n} \right]
+\frac{1}{2}\mathbb{E}\left[\|\eta_{Y}^{n+1}\|^2|\mathcal{F}_{t_n} \right]\\
&\leq\tau \mathbb{E}\left[\|X_{h\tau}(t_{n+1})+\mu_{h\tau}-X(t_{n+1})-\mu\|^2 |\mathcal{F}_{t_n} \right]+\frac{\tau}{4} \|\eta_{Y}^{n}\|^2+\frac{1}{2}\mathbb{E}\left[\|\eta_{Y}^{n+1}\|^2|\mathcal{F}_{t_n} \right]\\
&\leq \tau \mathbb{E}\left[\|X_{h\tau}(t_{n+1})+\mu_{h\tau}-X(t_{n+1})-\mu\|^2|\mathcal{F}_{t_n} \right]+\frac{\tau}{2}\mathbb{E}\left[\|\eta_{Y}^{n+1}-\eta_{Y}^{n}\|^2|\mathcal{F}_{t_n} \right]\\
&\quad+\frac{1+\tau}{2}\mathbb{E}\left[\|\eta_{Y}^{n+1}\|^{2}|\mathcal{F}_{t_n} \right].
\end{align*}
Taking the expectation on both sides of the above equation yields
\begin{equation}\label{dis_gronwall}
\begin{aligned}
&\mathbb{E}\left[2\tau\|\nabla\eta_{Y}^n\|^2+\|\eta_{Y}^n\|^2+\|\eta_{Y}^{n+1}-\eta_{Y}^{n}\|^2\right]\\
&\leq 2\tau \mathbb{E}\left[\|X_{h\tau}(t_{n+1})+\mu_{h\tau}-X(t_{n+1})-\mu\|^2 \right]
+\tau\mathbb{E}\left[\|\eta_{Y}^{n+1}-\eta_{Y}^{n}\|^2 \right]\\
&\quad+(1+\tau)\mathbb{E}\left[\|\eta_{Y}^{n+1}\|^2\right].
\end{aligned}
\end{equation}
Applying the discrete Gronwall inequality and (\ref{full_fullfu_X}) leads to
\begin{align*}
&\max_{0\leq n\leq N}\mathbb{E}\left[\|\eta_{Y}^n\|^2\right]\\
&\leq 2e^{T}\tau\sum_{n=0}^{N-1}\mathbb{E}\left[\|X_{h\tau}(t_{n+1})+\mu_{h\tau}-X(t_{n+1})-\mu\|^2 \right]\\
&\leq C\left(\tau\sum_{n=0}^{N-1}\mathbb{E}\left[\|X_{h\tau}(t_{n+1})-X(t_{n+1})\|^2 \right] + |\mu_{h\tau}-\mu|^2\right)\\
&\leq C\left(\tau\sum_{n=0}^{N-1}\mathbb{E}\left[\|X_{h\tau}(t_{n+1})-X_{h\tau}^{U}(t_{n+1})\|^2+\|X_{h\tau}^{U}(t_{n+1})-X(t_{n+1})\|^2 \right]+|\mu_{h\tau}-\mu|^2   \right)\\
&\leq C\left(\tau\sum_{n=0}^{N-1}\mathbb{E}\left[\|U_{h\tau}(t_{n})-U(t_{n})\|^2+\|X_{h\tau}^{U}(t_{n+1})-X(t_{n+1})\|^2 \right]+|\mu_{h\tau}-\mu|^2\right).
\end{align*}
Summing from $n=0$ to $N-1$ for (\ref{dis_gronwall}) yields
\begin{align*}
&\tau\sum_{n=0}^{N-1}\mathbb{E}\left[\|\nabla \eta_{Y}^n\|^2\right]\\
&\leq\tau\sum_{n=0}^{N-1}\mathbb{E}\left[\|\eta_{Y}^{n+1}\|^2\right]
+2\tau\sum_{n=0}^{N-1}\mathbb{E}\left[\|X_{h\tau}(t_{n+1})+\mu_{h\tau}-X(t_{n+1})-\mu\|^2 \right]\\
&\leq C\left(\tau\sum_{n=0}^{N-1}\mathbb{E}\left[\|U_{h\tau}(t_{n})-U(t_{n})\|^2+\|X_{h\tau}^{U}(t_{n+1})-X(t_{n+1})\|^2 \right]+|\mu_{h\tau}-\mu|^2\right).
\end{align*}
The desired result (\ref{full_fullfu_XY2}) is obtained by combining the above estimates.
\end{proof}
Next the upper bound on the multiplier error is estimated. Before that, the following deterministic parabolic equation is introduced:
\begin{equation}\label{varphi_strong}
\left\{\begin{aligned}
&\varphi_{t}(t)=\Delta \varphi(t)+1,\ t\in(0,T],\\
&\varphi(0)=0.
\end{aligned}\right.
\end{equation}
The weak form of (\ref{varphi_strong}) is as follows:
\begin{equation}\label{varphi_weak}
\left\{\begin{aligned}
&\left(\varphi_{t}(t),\phi\right)+\left(\nabla\varphi(t),\nabla\phi\right)=(1,\phi),\ \forall\phi\in H_{0}^{1}(\mathcal{D}),\\
&\varphi(0)=0.
\end{aligned}\right.
\end{equation}
According to Chapter 7.1.3 in \cite{evans}, there exists a unique weak solution and it holds that $\varphi(t)\in L^2\left(0,T;H^4(\mathcal{D})\right)\cap L^{\infty}\left(0,T;H^2(\mathcal{D})\right)$, $\varphi_{t}(t)\in L^2\left(0,T;H^2(\mathcal{D})\right)$ and $\varphi_{t,t}(t)\in L^2\left(0,T;L^2(\mathcal{D})\right)$. Further, for $t\in[t_n,t_{n+1}]$, we have
\begin{equation}\label{varphi_error1}
\begin{aligned}
\sum_{n=0}^{N-1}\int_{t_n}^{t_{n+1}}\|\varphi(t_{n+1})-\varphi(t)\|^2dt\leq2\tau^2\left(\int_{0}^{T}\|\Delta \varphi(t)\|^2dt+T|\mathcal{D}|\right)\leq C\tau^2.
\end{aligned}
\end{equation}
The space-time fully discrete approximation of (\ref{varphi_weak}) reads: Find $\varphi_{h\tau}(t)\in\mathbb{R}_{h\tau}$ such that
\begin{equation}\label{varphi_fully}
\left\{\begin{aligned}
&\left(\varphi_{h\tau}(t_{n+1})-\varphi_{h\tau}(t_{n}),\phi_h\right)+\tau\left(\nabla\varphi_{h\tau}(t_{n+1}),\nabla\phi_h  \right)=\tau(1,\phi_h),\ \forall\phi_h\in \mathbb{V}_{h}^{1},\\
&\varphi_{h\tau}(0)=0.
\end{aligned}\right.
\end{equation}
From Theorem 7.10 in \cite{touying_error}, the following fully discrete error estimate is obtained:
\begin{equation}\label{varphi_error2}
\begin{aligned}
\|\varphi_{h\tau}(t_{n})-\varphi(t_n)\|
\leq Ch^2\int_{0}^{t_n}\|\varphi_{t}(s)\|_{H^2(\mathcal{D})}ds
+C\tau\int_{0}^{t_n}\|\varphi_{t,t}(s)\|ds
\leq C(h^2+\tau).
\end{aligned}
\end{equation}
Based the above estimate and the regularity of $\varphi(t)$, it follows that
\begin{equation}\label{varphi_error3}
\begin{aligned}
\max\limits_{0\leq n\leq N}\|\varphi_{h\tau}(t_{n})\|\leq \sup_{t\in[0,T]}\|\varphi(t)\|+C(h^2+\tau)
\leq C.
\end{aligned}
\end{equation}
Then the error estimate between $\mu$ and $\mu_{h\tau}$ is given by the following theorem.
\begin{theorem}\label{estimate_mu}
Let $\mu$ and $\mu_{h\tau}$ be the non-negative constants obtained by (\ref{first_continuous}) and (\ref{fully_first_continuous}), then the following estimate holds for sufficiently small $h$ and $\tau$:
\begin{equation*}
\begin{aligned}
|\mu-\mu_{h\tau}|^2
&\leq C_{\mu}\left(\tau\sum_{n=0}^{N-1}\|\mathbb{E}\left[U_{h\tau}(t_n)-U(t_n)\right]\|^2+\|\mathbb{E}\left[X_{h\tau}^{U}(t_{n+1})-X(t_{n+1})\right]\|^2 \right.\\
&\quad\left.+\|\mathbb{E}\left[Y_{h\tau}^U(t_{n})-Y(t_n)\right]\|^2\right),
\end{aligned}
\end{equation*}
where $C_{\mu}$ is a positive constant independence of $h$ and $\tau$.
\end{theorem}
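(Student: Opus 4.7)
The plan is to bypass any separate identity for $\mu$ and $\mu_{h\tau}$ and instead derive a single identity directly for the difference $\mu_{h\tau} - \mu$, by testing the discrete BSPDE satisfied by $\mathbb{E}[Y_{h\tau}(t_n) - Y_{h\tau}^U(t_n)]$ against the discrete deterministic heat equation (\ref{varphi_fully}). This circumvents the circular dependence one would encounter by directly invoking Theorem \ref{full_fullfu_XY}, since the bound (\ref{full_fullfu_XY2}) already contains $|\mu_{h\tau}-\mu|^2$ on its right-hand side.

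I shall first subtract the discrete BSPDEs in (\ref{fully_first_continuous}) and (\ref{fullyfu_first_continuous}) and take unconditional expectations; writing $\tilde\eta_Y^n := \mathbb{E}[Y_{h\tau}(t_n) - Y_{h\tau}^U(t_n)]$, the resulting weak-form equation in $\mathbb{V}_h^1$ reads
\begin{equation*}
\tilde\eta_Y^n - \tilde\eta_Y^{n+1} \;=\; \tau\, \Delta_h\, \tilde\eta_Y^n \;+\; \tau\, \Pi_h^1 \bigl( \mathbb{E}[X_{h\tau}(t_{n+1}) - X(t_{n+1})] + (\mu_{h\tau} - \mu) \bigr), \qquad \tilde\eta_Y^N = 0.
\end{equation*}
Starting from the telescoping identity $\sum_{n=0}^{N-1}[(\tilde\eta_Y^{n+1},\varphi_{h\tau}(t_{n+1})) - (\tilde\eta_Y^n,\varphi_{h\tau}(t_n))] = 0$ (valid since $\tilde\eta_Y^N = 0$ and $\varphi_{h\tau}(0) = 0$), an Abel-summation expansion yields $\sum_n(\tilde\eta_Y^{n+1} - \tilde\eta_Y^n, \varphi_{h\tau}(t_{n+1})) + \sum_n(\tilde\eta_Y^n, \varphi_{h\tau}(t_{n+1}) - \varphi_{h\tau}(t_n)) = 0$. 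Substituting the displayed equation tested with $\varphi_{h\tau}(t_{n+1}) \in \mathbb{V}_h^1$ into the first sum, and (\ref{varphi_fully}) tested with $\tilde\eta_Y^n \in \mathbb{V}_h^1$ into the second, causes the two gradient pairings $\tau(\nabla\tilde\eta_Y^n, \nabla\varphi_{h\tau}(t_{n+1}))$ to cancel exactly (the discrete Laplacian is self-adjoint, and the $L^2$-projection acts trivially on $\mathbb{V}_h^1$), leaving the clean identity
\begin{equation*}
(\mu_{h\tau} - \mu)\, C_{\varphi_{h\tau}} \;=\; \tau \sum_{n=0}^{N-1} (1,\tilde\eta_Y^n) \;-\; \tau \sum_{n=0}^{N-1} \bigl( \mathbb{E}[X_{h\tau}(t_{n+1}) - X(t_{n+1})],\, \varphi_{h\tau}(t_{n+1}) \bigr),
\end{equation*}
with $C_{\varphi_{h\tau}} := \tau\sum_n (1, \varphi_{h\tau}(t_{n+1}))$. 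Since $\varphi > 0$ in $\mathcal{D}\times(0,T]$ by the strong maximum principle for (\ref{varphi_strong}), $C_\varphi > 0$, and by (\ref{varphi_error2})--(\ref{varphi_error3}), $|C_\varphi - C_{\varphi_{h\tau}}| \to 0$, so $C_{\varphi_{h\tau}} \geq C_\varphi/2 > 0$ for sufficiently small $h, \tau$---this is where the smallness hypothesis in the theorem enters.

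To put the identity into the form claimed I shall: (i) split $\tilde\eta_Y^n$ via $Y = -\alpha U$ and $Y_{h\tau} = -\alpha U_{h\tau}$ as $\tilde\eta_Y^n = \mathbb{E}[Y(t_n) - Y_{h\tau}^U(t_n)] - \alpha\, \mathbb{E}[U_{h\tau}(t_n) - U(t_n)]$, which directly produces the last two error families from the theorem; (ii) split $\mathbb{E}[X_{h\tau}(t_{n+1}) - X(t_{n+1})] = \mathbb{E}[X_{h\tau}(t_{n+1}) - X_{h\tau}^U(t_{n+1})] + \mathbb{E}[X_{h\tau}^U(t_{n+1}) - X(t_{n+1})]$, the second piece appearing in the theorem and the first bounded by a deterministic analogue of (\ref{full_fullfu_XY1}) obtained by rerunning that argument on $\mathbb{E}[X_{h\tau} - X_{h\tau}^U]$ (which is $\mu_{h\tau}$-free, since $\mu_{h\tau}$ does not enter the forward FSPDE, and the $\sigma$-contributions cancel because both discretizations share the same $\Pi_h^1 \sigma$); (iii) apply Cauchy--Schwarz together with the uniform bound (\ref{varphi_error3}) on $\|\varphi_{h\tau}\|$ to convert pointwise-in-$n$ inner products into $\tau\sum_n \|\mathbb{E}[\cdot]\|^2$. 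Squaring the resulting estimate for $\mu_{h\tau} - \mu$ yields the claim, with $C_\mu$ absorbing $4/C_\varphi^2$, $\alpha^2$, and the uniform bound on $\varphi_{h\tau}$.

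The main obstacle I expect is the Abel-summation in Step 2: the discrete BSPDE couples $\tilde\eta_Y^n$ with data at the shifted index $t_{n+1}$, while the natural test function $\varphi_{h\tau}(t_{n+1})$ also carries the shifted index, so one must carefully verify that the telescoping boundary contributions vanish (by $\tilde\eta_Y^N = 0$ and $\varphi_{h\tau}(0) = 0$) and that the two discrete-Laplacian cross pairings cancel exactly---any residual here would break the clean identity. A secondary care point is Step (ii): one must confirm that the deterministic FSPDE equation for $\mathbb{E}[X_{h\tau} - X_{h\tau}^U]$ is indeed $\mu_{h\tau}$-free, so that its bound does not re-introduce $(\mu_{h\tau} - \mu)$ and create circularity.
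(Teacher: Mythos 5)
Your proposal is correct and follows essentially the same route as the paper: the same auxiliary discrete heat equation (\ref{varphi_fully}) is used as a test function against the difference of the discrete BSPDEs, the gradient pairings cancel after summation by parts with the boundary terms vanishing via $Y_{h\tau}(T)-Y_{h\tau}^U(T)=0$ and $\varphi_{h\tau}(0)=0$, positivity of $\int_0^T\int_{\mathcal D}\varphi$ isolates $\mu-\mu_{h\tau}$, and the same splittings ($Y=-\alpha U$, $X_{h\tau}-X=(X_{h\tau}-X_{h\tau}^U)+(X_{h\tau}^U-X)$ with the deterministic stability bound (\ref{weak_Uh_U})) produce the three error families. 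The only cosmetic deviation is that you normalize directly by the discrete quantity $\tau\sum_n(1,\varphi_{h\tau}(t_{n+1}))$ and lower-bound it, whereas the paper divides by the continuous integral and absorbs a residual $|\mu-\mu_{h\tau}|^2(h^4+\tau^2)$ term; both require the same smallness of $h,\tau$ and the estimates (\ref{varphi_error1})--(\ref{varphi_error3}).
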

\begin{proof}
Fixing one realization $\omega\in\Omega$ and setting $\phi_h=Y_{h\tau}(t_{n})-Y_{h\tau}^U(t_{n})$ in (\ref{varphi_fully}) and $\psi_h=\varphi_{h\tau}(t_{n+1})$ in (\ref{lisan_fuzhu}) leads to
\begin{align*}
&\left(\mathbb{E}\left[Y_{h\tau}(t_{n+1})-Y_{h\tau}^U(t_{n+1})|\mathcal{F}_{t_n}\right],\varphi_{h\tau}(t_{n+1})\right)-
\left(Y_{h\tau}(t_{n})-Y_{h\tau}^U(t_{n}),\varphi_{h\tau}(t_{n})\right)\\
&=\left(\mathbb{E}\left[Y_{h\tau}(t_{n+1})-Y_{h\tau}^U(t_{n+1})|\mathcal{F}_{t_n}\right]-\left(Y_{h\tau}(t_{n})-Y_{h\tau}^U(t_{n})\right),\varphi_{h\tau}(t_{n+1})\right)\\
&\quad+\left(\varphi_{h\tau}(t_{n+1})-\varphi_{h\tau}(t_{n}),Y_{h\tau}(t_{n})-Y_{h\tau}^U(t_{n})\right)\\
&=\left(\tau\Delta_h \left(Y_{h\tau}^{U}(t_n)-Y_{h\tau}(t_n)\right)+\tau\mathbb{E}\left[X(t_{n+1})+\mu-X_{h\tau}(t_{n+1})-\mu_{h\tau}|\mathcal{F}_{t_n}\right],\varphi_{h\tau}(t_{n+1})\right)\\
&\quad+\tau\left(\Delta_h\varphi_{h\tau}(t_{n+1})+1,Y_{h\tau}(t_{n})-Y_{h\tau}^U(t_{n})\right)\\
&=\left(\tau\mathbb{E}\left[X(t_{n+1})+\mu-X_{h\tau}(t_{n+1})-\mu_{h\tau}|\mathcal{F}_{t_n}\right],\varphi_{h\tau}(t_{n+1})\right)+\tau\left(1,Y_{h\tau}(t_{n})-Y_{h\tau}^U(t_{n})\right).
\end{align*}
Taking the expectation and summing from $n=0$ to $N-1$ yields
\begin{equation*}
\begin{aligned}
&\sum_{n=0}^{N-1}\tau\left(\mu-\mu_{h\tau},\varphi_{h\tau}(t_{n+1})\right)\\
&=\sum_{n=0}^{N-1}\tau\left(\mathbb{E}\left[X_{h\tau}(t_{n+1})-X(t_{n+1})\right],\varphi_{h\tau}(t_{n+1})\right)+\sum_{n=0}^{N-1}\tau\left(1,\mathbb{E}\left[Y_{h\tau}^U(t_{n})-Y_{h\tau}(t_{n})\right]\right).
\end{aligned}
\end{equation*}
Further, it follows that
\begin{align*}
&\mu-\mu_{h\tau}\\
&=C_\varphi\left(\sum_{n=0}^{N-1}\tau\left(\mathbb{E}\left[X_{h\tau}(t_{n+1})-X(t_{n+1})\right],\varphi_{h\tau}(t_{n+1})\right)  \right.\\
&\left.\quad+\sum_{n=0}^{N-1}\tau\left(1,\mathbb{E}\left[Y_{h\tau}^U(t_{n})-Y_{h\tau}(t_{n})\right]\right) +\left[\mu-\mu_{h\tau},\varphi(t)-\varphi_{h\tau}(t)\right]\right)\\
&=C_\varphi\left(\sum_{n=0}^{N-1}\tau\left(\mathbb{E}\left[X_{h\tau}(t_{n+1})-X(t_{n+1})\right],\varphi_{h\tau}(t_{n+1})\right)  \right.\\
&\left.\quad+\sum_{n=0}^{N-1}\tau\left(1,\mathbb{E}\left[Y_{h\tau}^U(t_{n})-Y_{h\tau}(t_{n})\right]\right) +(\mu-\mu_{h\tau})\sum_{n=0}^{N-1}\int_{t_n}^{t_{n+1}}\int_{\mathcal{D}}\varphi(t)-\varphi(t_{n+1})dxdt \right.\\
&\left.\quad+(\mu-\mu_{h\tau})\tau\sum_{n=0}^{N-1}\int_{\mathcal{D}}\varphi(t_{n+1})-\varphi_{h\tau}(t_{n+1})dx\right),
\end{align*}
where $C_\varphi:=\frac{1}{\int_{0}^{T}\int_{\mathcal{D}}\varphi(t)dxdt}>0$ supposed by the maximum principle (\cite{evans}).
Employing (\ref{varphi_error1}), (\ref{varphi_error2}), (\ref{varphi_error3}) and Cauchy-Schwartz inequality gives
\begin{align*}
&|\mu-\mu_{h\tau}|^2\\
&\leq 4C_\varphi^2\left[\left(\tau\sum_{n=0}^{N-1}\int_{\mathcal{D}}\mathbb{E}\left[X_{h\tau}(t_{n+1})-X(t_{n+1})\right]\varphi_{h\tau}(t_{n+1})dx\right)^2 \right.\\
&\quad\left.+\left(\tau\sum_{n=0}^{N-1}\int_{\mathcal{D}}\mathbb{E}\left[Y_{h\tau}^U(t_{n})-Y_{h\tau}(t_{n})   \right]dx\right)^2 \right.\\
&\left.\quad+|\mu-\mu_{h\tau}|^2\left(\sum_{n=0}^{N-1}\int_{t_n}^{t_{n+1}}\int_{\mathcal{D}}\varphi(t)-\varphi(t_{n+1})dxdt\right)^2 \right.\\
&\left.\quad+|\mu-\mu_{h\tau}|^2\left(\tau\sum_{n=0}^{N-1}\int_{\mathcal{D}}\varphi(t_{n+1})-\varphi_{h\tau}(t_{n+1})dx\right)^2
\right]\\
&\leq 4C_\varphi^2\left(\tau\sum_{n=0}^{N-1}\|\mathbb{E}\left[X_{h\tau}(t_{n+1})-X(t_{n+1})\right]\|^2\|\varphi_{h\tau}(t_{n+1})\|^2 \right.\\
&\left.\quad+|\mathcal{D}|\tau\sum_{n=0}^{N-1}\|\mathbb{E}\left[Y_{h\tau}^U(t_{n})-Y_{h\tau}(t_{n})\right]\|^2
+C|\mathcal{D}||\mu-\mu_{h\tau}|^2(h^4+\tau^2) \right.\\
&\left.\quad+|\mu-\mu_{h\tau}|^2|\mathcal{D}|\sum_{n=0}^{N-1}\int_{t_n}^{t_{n+1}}\|\varphi(t)-\varphi(t_{n+1})\|^2dt\right)\\
&\leq C\left(\tau\sum_{n=0}^{N-1}\left(\|\mathbb{E}\left[X_{h\tau}(t_{n+1})-X(t_{n+1})\right]\|^2+\|\mathbb{E}\left[Y_{h\tau}^U(t_{n})-Y_{h\tau}(t_{n})\right]\|^2
\right) \right.\\  
&\left.\quad+|\mu-\mu_{h\tau}|^2 (h^4+\tau^2)\right),
\end{align*}
which implies the following estimate for  sufficiently small $h$ and $\tau$
\begin{align*}
&|\mu-\mu_{h\tau}|^2\\
&\leq C\left(\tau\sum_{n=0}^{N-1}\left(\|\mathbb{E}\left[X_{h\tau}(t_{n+1})-X_{h\tau}^{U}(t_{n+1})\right]\|^2+\|\mathbb{E}\left[X_{h\tau}^{U}(t_{n+1})-X(t_{n+1})\right]\|^2\right)\right.\\
&\left.\quad+\tau\sum_{n=0}^{N-1}\left(\|\mathbb{E}\left[Y_{h\tau}^U(t_{n})-Y(t_n)\right]\|^2+\|\mathbb{E}\left[Y(t_n)-Y_{h\tau}(t_{n})\right]\|^2\right)\right).
\end{align*}
Taking the expectation for the FSPDE in (\ref{lisan_fuzhu}), setting $\phi_h=\mathbb{E}\left[X_{h\tau}(t_{n+1})-X_{h\tau}^U(t_{n+1})\right]$ and performing the derivation like (\ref{full_fullfu_X}) in Theorem \ref{full_fullfu_XY} leads to
\begin{equation}\label{weak_Uh_U}
\begin{aligned}
\max_{0\leq n\leq N}\|\mathbb{E}\left[X_{h\tau}(t_{n})-X_{h\tau}^U(t_{n})\right]\|^2\leq \tau\sum_{n=0}^{N-1}\|\mathbb{E}\left[U_{h\tau}(t_{n})-U(t_{n})\right]\|^2.
\end{aligned}
\end{equation}
Using the fact that $Y(t)=-\alpha U(t)$ and $Y_{h\tau}(t)=-\alpha U_{h\tau}(t)$ in (\ref{first_continuous}) and (\ref{fully_first_continuous}) as well as (\ref{weak_Uh_U}) gives
\begin{equation*}
\begin{aligned}
|\mu-\mu_{h\tau}|^2
&\leq C_{\mu}\left(\tau\sum_{n=0}^{N-1}\|\mathbb{E}\left[U_{h\tau}(t_n)-U(t_n)\right]\|^2+\|\mathbb{E}\left[X_{h\tau}^{U}(t_{n+1})-X(t_{n+1})\right]\|^2 \right.\\
&\quad\left.+\|\mathbb{E}\left[Y_{h\tau}^U(t_{n})-Y(t_n)\right]\|^2\right).
\end{aligned}
\end{equation*}
\end{proof}
Based on the previous results, we have the following optimal error estimates of the multiplier, control, state and adjoint state variables.
\begin{theorem}\label{main_order}
Under the regularities in Theorem \ref{fbspde_exact_fu}, let the solutions of (\ref{first_continuous}) and (\ref{fully_first_continuous}) be $\left(X(t),Y(t),Z(t),\mu,U(t)\right)$ and $\left(X_{h\tau}(t),Y_{h\tau}(t),Z_{h\tau}(t),\mu_{h\tau},U_{h\tau}(t)\right)$, respectively, the following error estimates are obtained for sufficiently small $h$ and $\tau$:
\begin{equation}\label{main_theorem}
\begin{aligned}
&\max_{0\leq n\leq N}\mathbb{E}\left[\|U_{h\tau}(t_n)-U(t_n)\|^2\right]+|\mu-\mu_{h\tau}|^2\leq C(h^4+\tau^2),\\
&\max_{0\leq n\leq N}\mathbb{E}\left[\|X(t_n)-X_{h\tau}(t_n)\|^2\right]+\max_{0\leq n\leq N}\mathbb{E}\left[\|Y(t_n)-Y_{h\tau}(t_n)\|^2\right]\leq C(h^4+\tau^2),\\
&\tau\sum_{n=0}^{N-1}\mathbb{E}\left[\|\nabla(X(t_{n+1})-X_{h\tau}(t_{n+1}))\|^2+\|\nabla(Y(t_n)-Y_{h\tau}(t_{n}))\|^2\right]\leq C(h^2+\tau^2).
\end{aligned}
\end{equation}
\end{theorem}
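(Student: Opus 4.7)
The plan is to reduce the control error to the adjoint-state error via the pointwise gradient equations $Y(t)+\alpha U(t)=0$ and $Y_{h\tau}(t)+\alpha U_{h\tau}(t)=0$ coming from the optimality systems (\ref{first_continuous}) and (\ref{fully_first_continuous}). Squaring, taking expectations and inserting the auxiliary process $Y_{h\tau}^{U}$ by the triangle inequality gives
\begin{equation*}
\alpha^{2}\mathbb{E}\bigl[\|U_{h\tau}(t_{n})-U(t_{n})\|^{2}\bigr]\le 2\mathbb{E}\bigl[\|Y_{h\tau}(t_{n})-Y_{h\tau}^{U}(t_{n})\|^{2}\bigr]+2\mathbb{E}\bigl[\|Y_{h\tau}^{U}(t_{n})-Y(t_{n})\|^{2}\bigr].
\end{equation*}
The second term is of order $h^{4}+\tau^{2}$ uniformly in $n$ by Theorem~\ref{fbspde_exact_fu}, while the first is controlled by~(\ref{full_fullfu_XY2}) in terms of $\tau\sum_{n}\mathbb{E}[\|U_{h\tau}(t_{n})-U(t_{n})\|^{2}]$, $\tau\sum_{n}\mathbb{E}[\|X_{h\tau}^{U}(t_{n+1})-X(t_{n+1})\|^{2}]$ (again of order $h^{4}+\tau^{2}$ by Theorem~\ref{fbspde_exact_fu}), and the multiplier gap $|\mu-\mu_{h\tau}|^{2}$.

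To close the estimate I would invoke Theorem~\ref{estimate_mu}, using Jensen's inequality $\|\mathbb{E}[f]\|^{2}\le\mathbb{E}[\|f\|^{2}]$ to upgrade the weak norms appearing there to the strong norms of Theorem~\ref{fbspde_exact_fu}. This yields $|\mu-\mu_{h\tau}|^{2}\le C\bigl(\tau\sum_{n}\mathbb{E}[\|U_{h\tau}(t_{n})-U(t_{n})\|^{2}]+h^{4}+\tau^{2}\bigr)$. Substituting back produces an inequality of the form
\begin{equation*}
\max_{0\le n\le N}\mathbb{E}[\|U_{h\tau}(t_{n})-U(t_{n})\|^{2}]\le C\Bigl(\tau\sum_{n=0}^{N-1}\mathbb{E}[\|U_{h\tau}(t_{n})-U(t_{n})\|^{2}]+h^{4}+\tau^{2}\Bigr),
\end{equation*}
which I would resolve by the discrete Gronwall argument already used in the proof of Theorem~\ref{full_fullfu_XY}, reading the backward BSPDE recursion~(\ref{dis_gronwall}) index by index, for $h,\tau$ small enough that the $\tau\sum$-term on the right can be absorbed into the $\max$ on the left. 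This delivers $\max_{n}\mathbb{E}[\|U_{h\tau}(t_{n})-U(t_{n})\|^{2}]\le C(h^{4}+\tau^{2})$ and, feeding back, $|\mu-\mu_{h\tau}|^{2}\le C(h^{4}+\tau^{2})$.

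With the control error under control, the $L^{2}$ bounds on $X-X_{h\tau}$ and $Y-Y_{h\tau}$ in~(\ref{main_theorem}) follow by inserting this rate into~(\ref{full_fullfu_XY1})--(\ref{full_fullfu_XY2}), combining with the analogous estimates from Theorem~\ref{fbspde_exact_fu}, and applying the triangle inequality through $X_{h\tau}^{U}$ and $Y_{h\tau}^{U}$. The $H^{1}$-gradient bound in the last line of~(\ref{main_theorem}) is obtained along the same route from the gradient parts of Theorems~\ref{fbspde_exact_fu} and~\ref{full_fullfu_XY}, where the spatial order is $h$ rather than $h^{2}$ and thus matches the announced $h^{2}+\tau^{2}$.

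The main obstacle is precisely the coupled absorption loop between $\max_{n}\mathbb{E}[\|U_{h\tau}-U\|^{2}]$ and $|\mu-\mu_{h\tau}|^{2}$: each bounds the other, so one must track all constants carefully, verify that the passage from weak to strong norms via Jensen loses no rate, and ensure the discrete Gronwall step closes at optimal order without reintroducing a factor depending badly on $T$ or $\alpha$. This is exactly what the ``sufficiently small $h$ and $\tau$'' hypothesis in the statement pays for.
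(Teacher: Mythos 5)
Your reduction collapses at the step where you try to close the loop. After inserting $Y_{h\tau}^{U}$, invoking (\ref{full_fullfu_XY2}) and Theorem~\ref{estimate_mu}, you arrive at a self-bounding inequality of the form
\begin{equation*}
\max_{0\le n\le N}a_n\le C\Bigl(\tau\sum_{n=0}^{N-1}a_n+h^4+\tau^2\Bigr),\qquad a_n:=\mathbb{E}\bigl[\|U_{h\tau}(t_n)-U(t_n)\|^2\bigr],
\end{equation*}
and propose to resolve it by discrete Gronwall or by absorption for small $h,\tau$. Neither works. Discrete Gronwall requires a genuinely recursive structure ($a_n$ bounded by a partial sum over indices $j>n$ or $j<n$), whereas here the full sum $\tau\sum_n a_n$ appears on the right; and since $\tau\sum_n a_n$ can be as large as $T\max_n a_n$, absorption would require $CT<1$. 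The constant $C$ is inherited from the factor $2e^{T}$ in (\ref{full_fullfu_XY2}) and from $C_\mu$ in Theorem~\ref{estimate_mu}; it is a fixed $O(1)$ quantity that does not shrink with $h$ or $\tau$, so the hypothesis ``sufficiently small $h$ and $\tau$'' buys you nothing here. (That hypothesis is needed in the paper only to absorb the $|\mu-\mu_{h\tau}|^2(h^4+\tau^2)$ term inside Theorem~\ref{estimate_mu} itself.)

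The missing ingredient is the convexity/duality argument that the paper uses in its Step 1. Starting from $\alpha\tau\sum_n\mathbb{E}[\|U_{h\tau}(t_n)-U(t_n)\|^2]=\tau\sum_n\mathbb{E}[(Y(t_n)-Y_{h\tau}(t_n),U_{h\tau}(t_n)-U(t_n))]$ (note the coercivity constant $\alpha$ enters linearly, not as $\alpha^2$ from squaring), one tests the difference system (\ref{lisan_fuzhu}) with $\phi_h=Y_{h\tau}-Y_{h\tau}^U$ and $\psi_h=X_{h\tau}-X_{h\tau}^U$ to obtain the duality identity (\ref{esti_u_1}); this produces the sign-definite term $-\tau\sum_n\mathbb{E}[\|X_{h\tau}(t_{n+1})-X_{h\tau}^U(t_{n+1})\|^2]$ on the right (moved to the left), and reduces the dangerous multiplier coupling to $\int_0^T\int_{\mathcal D}(\mu-\mu_{h\tau})\mathbb{E}[X_{h\tau}-X]\,dx\,dt\le 0$, which is killed by the complementarity conditions (\ref{mu_property}) and (\ref{lisan_mu_property}). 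After that, every occurrence of $\tau\sum_n\mathbb{E}[\|U_{h\tau}-U\|^2]$ on the right carries a free Young parameter $\varepsilon_i$, and choosing $\varepsilon_1+2\varepsilon_2+|\mathcal D|C_\mu(\varepsilon_3+\varepsilon_4)<\alpha$ absorbs them into the left. Only then does one obtain the $L^2$-in-time rate for the control, feed it into Theorem~\ref{estimate_mu} for the multiplier, and recover the uniform-in-time control bound a posteriori from $\max_n\mathbb{E}[\|U_{h\tau}(t_n)-U(t_n)\|^2]\le C\max_n\mathbb{E}[\|Y(t_n)-Y_{h\tau}(t_n)\|^2]$ --- the order of these two steps is the reverse of yours. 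Your use of Jensen to pass from the weak norms of Theorem~\ref{estimate_mu} to strong norms is fine, and your final deductions of the state, adjoint and gradient bounds from Theorems~\ref{fbspde_exact_fu} and~\ref{full_fullfu_XY} match the paper's Step 2; but without the duality identity and the sign of the multiplier term, the central estimate does not close.
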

\begin{proof}
The proof is completed in two steps. The first step proves the following estimate: 
\begin{equation*}
\begin{aligned}
\tau\sum_{n=0}^{N-1}\mathbb{E}\left[\|U_{h\tau}(t_n)-U(t_n)\|^2\right]\leq C(h^4+\tau^2),
\end{aligned}
\end{equation*}
and the second step gives the results (\ref{main_theorem}).

\textbf{Step 1.} Setting $\phi_h=Y_{h\tau}(t_n)-Y_{h\tau}^U(t_n)$, $\psi_h=X_{h\tau}(t_{n+1})-X_{h\tau}^{U}(t_{n+1})$ in (\ref{lisan_fuzhu}) and using the $\mathcal{F}_{t_n}$-measurability of $X_{h\tau}(t_{n+1})-X_{h\tau}^{U}(t_{n+1})$ yields
\begin{equation}\label{esti_u_1}
\begin{aligned}
&\tau\sum_{n=0}^{N-1}\mathbb{E}\left[\left(Y_{h\tau}^U(t_n)-Y_{h\tau}(t_n), U_{h\tau}(t_n)-U(t_n)  \right)\right]\\
&=\tau\sum_{n=0}^{N-1}\mathbb{E}\left[\left( X(t_{n+1})-X_{h\tau}(t_{n+1})+\mu-\mu_{h\tau},X_{h\tau}(t_{n+1})-X_{h\tau}^{U}(t_{n+1})  \right)\right].
\end{aligned}
\end{equation}
From (\ref{esti_u_1}), the continuous and discrete first-order optimality conditions we get
\begin{align*}
&\alpha\tau\sum_{n=0}^{N-1}\mathbb{E}\left[\|U_{h\tau}(t_n)-U(t_n)\|^2\right]\\
&=\tau\sum_{n=0}^{N-1}\mathbb{E}\left[\left(Y(t_n)-Y_{h\tau}(t_n),U_{h\tau}(t_n)-U(t_n)\right)\right]\\
&=\tau\sum_{n=0}^{N-1}\mathbb{E}\left[\left(Y(t_n)-Y_{h\tau}^U(t_n),U_{h\tau}(t_n)-U(t_n)\right)
+\left(Y_{h\tau}^U(t_n)-Y_{h\tau}(t_n), U_{h\tau}(t_n)-U(t_n)  \right)\right]\\
&=\tau\sum_{n=0}^{N-1}\mathbb{E}\left[\left(Y(t_n)-Y_{h\tau}^U(t_n),U_{h\tau}(t_n)-U(t_n)\right)\right]\\
&\quad+\tau\sum_{n=0}^{N-1}\mathbb{E}\left[\left( X(t_{n+1})- X_{h\tau}^{U}(t_{n+1}),X_{h\tau}(t_{n+1})-X_{h\tau}^{U}(t_{n+1})\right)\right]\\
&\quad+\tau\sum_{n=0}^{N-1}\mathbb{E}\left[\left(  X_{h\tau}^{U}(t_{n+1})-X_{h\tau}(t_{n+1}),X_{h\tau}(t_{n+1})-X_{h\tau}^{U}(t_{n+1})\right) \right]\\
&\quad+\tau\sum_{n=0}^{N-1}\mathbb{E}\left[\left(\mu-\mu_{h\tau},X_{h\tau}(t_{n+1})-X_{h\tau}^{U}(t_{n+1})\right) \right].
\end{align*}
Employing (\ref{mu_property}), (\ref{lisan_mu_property}) and the property of conditional expectation gives
\begin{equation*}
\begin{aligned}
&\tau\sum_{n=0}^{N-1}\mathbb{E}\left[\left(\mu-\mu_{h\tau},X_{h\tau}(t_{n+1})-X_{h\tau}^{U}(t_{n+1})\right) \right]\\
&=\int_{0}^{T}\int_{\mathcal{D}}(\mu-\mu_{h\tau})\mathbb{E}\left[X_{h\tau}(t)-X(t)\right]dxdt
+\sum_{n=0}^{N-1}\int_{t_n}^{t_{n+1}}\left(\mu-\mu_{h\tau},\mathbb{E}\left[X(t)-X(t_{n+1})\right]\right)dt\\
&\quad+\tau\sum_{n=0}^{N-1}\left(\mu-\mu_{h\tau},\mathbb{E}\left[X(t_{n+1})-X_{h\tau}^{U}(t_{n+1})\right]\right) \\
&\leq 0+\sum_{n=0}^{N-1}\int_{t_n}^{t_{n+1}}\left(\mu-\mu_{h\tau},\mathbb{E}\left[X(t)-X(t_{n+1}) \right]\right)dt\\
&\quad+\tau\sum_{n=0}^{N-1}\left(\mu-\mu_{h\tau},\mathbb{E}\left[X(t_{n+1})-X_{h\tau}^{U}(t_{n+1})\right]\right).
\end{aligned}
\end{equation*}
From Cauchy-Schwartz inequality, Young's inequality and Theorems \ref{full_fullfu_XY}, \ref{estimate_mu}, the following estimate holds
\begin{align}\label{esti_u_2}
&\alpha\tau\sum_{n=0}^{N-1}\mathbb{E}\left[\|U_{h\tau}(t_n)-U(t_n)\|^2\right]+\tau\sum_{n=0}^{N-1}\mathbb{E}\left[\|X_{h\tau}^{U}(t_{n+1})-X_{h\tau}(t_{n+1})\|^2\right] \nonumber\\
&\leq \tau\sum_{n=0}^{N-1}\mathbb{E}\left[\left(Y(t_n)-Y_{h\tau}^U(t_n),U_{h\tau}(t_n)-U(t_n)\right)  +\left(\mu-\mu_{h\tau},X(t_{n+1})-X_{h\tau}^{U}(t_{n+1})\right)\right] \nonumber\\
&\quad+\tau\sum_{n=0}^{N-1}\mathbb{E}\left[\left( X(t_{n+1})- X_{h\tau}^{U}(t_{n+1}),X_{h\tau}(t_{n+1})-X_{h\tau}^{U}(t_{n+1})\right) \right] \nonumber\\
&\quad   +\sum_{n=0}^{N-1}\int_{t_n}^{t_{n+1}}\left(\mu-\mu_{h\tau},\mathbb{E}\left[X(t)-X(t_{n+1})\right]\right) dt \nonumber\\
&\leq \tau\sum_{n=0}^{N-1}\mathbb{E}\left[\frac{1}{\varepsilon_1}\|Y(t_n)-Y_{h\tau}^U(t_n)\|^2+{\varepsilon_1}\|U_{h\tau}(t_n)-U(t_n)\|^2\right. \nonumber\\
&\quad\left.+\frac{1}{\varepsilon_2}\|X(t_{n+1})-X_{h\tau}^{U}(t_{n+1})\|^2+{\varepsilon_2}\|X_{h\tau}(t_{n+1})-X_{h\tau}^{U}(t_{n+1})\|^2  \right] \nonumber\\
&\quad+{|\mathcal{D}|\varepsilon_3}|\mu-\mu_{h\tau}|^2+\frac{1}{\varepsilon_3}\tau\sum_{n=0}^{N-1}\mathbb{E}\left[ \|X(t_{n+1})-X_{h\tau}^{U}(t_{n+1})\|^2 \right] \nonumber\\
&\quad+\sum_{n=0}^{N-1}\int_{t_n}^{t_{n+1}}\left(\mu-\mu_{h\tau},\mathbb{E}\left[X(t)-X(t_{n+1})\right]\right) dt \nonumber\\
&\leq \left(\varepsilon_1+2\varepsilon_2+|\mathcal{D}|C_{\mu}\varepsilon_3\right)\tau\sum_{n=0}^{N-1}\mathbb{E}\left[\|U_{h\tau}(t_n)-U(t_n)\|^2\right] \nonumber\\
&\quad+\left(\frac{1}{\varepsilon_1}+\frac{1}{\varepsilon_2}+C_{\mu}|\mathcal{D}|\varepsilon_3 +\frac{1}{\varepsilon_3}\right)\tau\sum_{n=0}^{N-1}\mathbb{E}\left[\|Y(t_n)-Y_{h\tau}^U(t_n)\|^2 \right.\nonumber\\
&\left.\quad\quad+\|X(t_{n+1})-X_{h\tau}^{U}(t_{n+1})\|^2  \right]
+\sum_{n=0}^{N-1}\int_{t_n}^{t_{n+1}}\left(\mu-\mu_{h\tau},\mathbb{E}\left[X(t)-X(t_{n+1})\right]\right) dt.
\end{align}
From the state equation (\ref{state}), Cauchy-Schwarz inequality and Theorem \ref{estimate_mu} again, it follows that
\begin{equation}\label{esti_u_3}
\begin{aligned}
&\sum_{n=0}^{N-1}\int_{t_n}^{t_{n+1}}\int_{\mathcal{D}}(\mu-\mu_{h\tau})\mathbb{E}\left[X(t)-X(t_{n+1})\right]dxdt\\
&=\sum_{n=0}^{N-1}\int_{\mathcal{D}}\int_{t_n}^{t_{n+1}}(\mu-\mu_{h\tau})\mathbb{E}\left[\int_{t}^{t_{n+1}}-\Delta X(s)-U(s)ds\right]dtdx\\
&\leq \frac{\tau^2}{\varepsilon_4}\int_{0}^{T}\mathbb{E}\left[ \|X(t)\|_{H^2(\mathcal{D})}^2+\|U(t)\|^2\right]dt+\varepsilon_4|\mathcal{D}||\mu-\mu_{h\tau}|^2\\
&\leq C\tau^2+\varepsilon_4|\mathcal{D}|C_{\mu}\left(\tau\sum_{n=0}^{N-1}\mathbb{E}\left[\|U_{h\tau}(t_n)-U(t_n)\|^2+\|X_{h\tau}^{U}(t_{n+1})-X(t_{n+1})\|^2 \right.\right.\\
&\quad\left.\left.+\|Y_{h\tau}^U(t_{n})-Y(t_n)\|^2\right]\right).
\end{aligned}
\end{equation}
The positive constants $\varepsilon_1,\varepsilon_2,\varepsilon_3$ and $\varepsilon_4$ are chosen to satisfy
\begin{equation*}
\begin{aligned}
\varepsilon_1+2\varepsilon_2+|\mathcal{D}|C_{\mu}(\varepsilon_3+\varepsilon_4)<\alpha.
\end{aligned}
\end{equation*}
Combining (\ref{esti_u_2}), (\ref{esti_u_3}) and Theorem \ref{fbspde_exact_fu} yields
\begin{equation}\label{estimate_u_l2}
\begin{aligned}
\tau\sum_{n=0}^{N-1}\mathbb{E}\left[\|U_{h\tau}(t_n)-U(t_n)\|^2\right]\leq C\tau^2+C\left(h^4+\tau^2  \right)\leq C\left(h^4+\tau^2\right).
\end{aligned}
\end{equation}

\textbf{Step 2.} Combining (\ref{estimate_u_l2}), Theorem \ref{fbspde_exact_fu} and Theorem \ref{estimate_mu} leads to
\begin{equation}\label{optimal_mu}
\begin{aligned}
|\mu-\mu_{h\tau}|^2\leq C(h^4+\tau^2).
\end{aligned}
\end{equation}
Then from Theorems \ref{fbspde_exact_fu}, \ref{full_fullfu_XY} and estimates (\ref{estimate_u_l2}), (\ref{optimal_mu}), it holds that
\begin{align}\label{esti_u_4}
&\max_{0\leq n\leq N}\mathbb{E}\left[\|X(t_n)-X_{h\tau}(t_n)\|^2\right]+\max_{0\leq n\leq N}\mathbb{E}\left[\|Y(t_n)-Y_{h\tau}(t_n)\|^2\right] \nonumber\\
&\leq 2\max_{0\leq n\leq N}\mathbb{E}\left[\|X(t_n)-X_{h\tau}^U(t_n)\|^2\right]+
2\max_{0\leq n\leq N}\mathbb{E}\left[\|X_{h\tau}^U(t_n)-X_{h\tau}(t_n)\|^2\right] \nonumber\\
&\quad+2\max_{0\leq n\leq N}\mathbb{E}\left[\|Y(t_n)-Y_{h\tau}^U(t_n)\|^2\right]+
2\max_{0\leq n\leq N}\mathbb{E}\left[\|Y_{h\tau}^U(t_n)-Y_{h\tau}(t_n)\|^2\right] \nonumber\\
&\leq C\left(h^4+\tau^2+\tau\sum_{n=0}^{N-1}\mathbb{E}\left[\|U_{h\tau}(t_n)-U(t_n)\|^2 
+\|X(t_{n+1})-X_{h\tau}^{U}(t_{n+1})\|^2 \right] \right. \nonumber\\
&\quad\left.+|\mu-\mu_{h\tau}|^2 \right) \nonumber\\
&\leq C(h^4+\tau^2).
\end{align}
Analogously, the optimal error estimates in $H_{0}^{1}(\mathcal{D})$-norm are deduced as 
\begin{equation}\label{esti_u_5}
\begin{aligned}
&\tau\sum_{n=0}^{N-1}\mathbb{E}\left[\|\nabla\left(X(t_{n+1})-X_{h\tau}(t_{n+1})\right)\|^2+\|\nabla\left(Y(t_n)-Y_{h\tau}(t_{n})\right)\|^2\right]\\
&\leq 2\tau\sum_{n=0}^{N-1}\mathbb{E}\left[\|\nabla\left(X(t_{n+1})-X_{h\tau}^U(t_{n+1})\right)\|^2+\|\nabla\left(X_{h\tau}^U(t_{n+1})-X_{h\tau}(t_{n+1})\right)\|^2 \right]\\
&\quad+2\tau\sum_{n=0}^{N-1}\mathbb{E}\left[\|\nabla\left(Y(t_n)-Y_{h\tau}^U(t_{n})\right)\|^2+\|\nabla\left(Y_{h\tau}^U(t_{n})-Y_{h\tau}(t_{n})\right)\|^2 \right]\\
&\leq C\left(h^2+\tau^2+\tau\sum_{n=0}^{N-1}\mathbb{E}\left[\|X(t_{n+1})-X_{h\tau}^{U}(t_{n+1})\|^2+\|U_{h\tau}(t_{n})-U(t_{n})\|^2 \right] \right.\\
&\quad\left.+|\mu-\mu_{h\tau}|^2   \right)\\
&\leq C(h^2+\tau^2).
\end{aligned}
\end{equation}
Using (\ref{esti_u_4}) and (\ref{first_continuous}), (\ref{fullyfu_first_continuous}) again yields 
\begin{equation}\label{esti_u_6}
\begin{aligned}
\max_{0\leq n \leq N}\mathbb{E}\left[\|U_{h\tau}(t_n)-U(t_n)\|^2\right]
\leq C\max_{0\leq n \leq N}\mathbb{E}\left[\|Y(t_n)-Y_{h\tau}(t_n)\|^2\right]\leq C(h^4+\tau^2).
\end{aligned}
\end{equation}
Putting the estimates (\ref{optimal_mu}), (\ref{esti_u_4}), (\ref{esti_u_5}), (\ref{esti_u_6}) together gives the desired assertion (\ref{main_theorem}).
\end{proof}
\begin{remark}
Notice that the strong convergence orders are used to bounded the multiplier error in (\ref{optimal_mu}). 
The reason we are allowed to do this is that in additive noise the optimal strong convergence order that is the same as the weak convergence order is achieved, which does not hold in all cases, e.g., when the state equation is driven by multiplicative noise.
In this case, from the error estimate of stochastic differential equation with multiplicative noise it is clear that strong convergence order is only half of the weak convergence order \cite{Zhang}.
\end{remark}

\section{A gradient projection algorithm}
In this section, an efficient gradient projection algorithm is proposed to solve SOCP (\ref{fully_object})-(\ref{fully_state}), the core idea of which is to choose the specific multiplier in each iteration step so that the numerical state satisfies the constraint condition. Further, the convergence analysis of the algorithm is deduced.

Like the deterministic OCP with integral state constraint \cite{integral1,integral2}, for a given $U_{h\tau}^{i}(t)\in \mathbb{U}_{h\tau}$, the computational scheme reads:
\begin{equation}\label{ita_scheme}
\left\{\begin{aligned}
&U_{h\tau}^{i+\frac{1}{2}}(t)=U_{h\tau}^{i}(t)-\rho\left(\alpha U_{h\tau}^{i}(t)+\tilde{Y}_{h\tau}^{i}(t)\right),\\
&U_{h\tau}^{i+1}(t)=\mathbb{R}_h\left(U_{h\tau}^{i+\frac{1}{2}}(t)\right)=U_{h\tau}^{i+\frac{1}{2}}(t)-\rho\mu_{h\tau}^{i}\tilde{M}_{h\tau}(t).
\end{aligned}\right.
\end{equation}
Here $\rho$ is a positive constant and $\mathbb{R}_h$ is the projection operator from $\mathbb{U}_{h\tau}$ to the control domain $\mathbb{U}_{h\tau,\delta}$ given in (\ref{lisan_control_domain}). The projection $\mathbb{R}_h$ satisfies the following definition:
\begin{equation}\label{pro_Rh}
\begin{aligned}
\mathbb{E}\left[\left[V_{h\tau}(t)-\mathbb{R}_hV_{h\tau}(t),W_{h\tau}(t)-\mathbb{R}_hV_{h\tau}(t)    \right]\right]\leq 0,\ \forall W_{h\tau}\in\mathbb{U}_{h\tau,\delta}.
\end{aligned}
\end{equation}
We denote by $X_{h\tau}^{i}(t)\in\mathbb{X}_{h\tau}$ and $\left(Y_{h\tau}^{i}(t),Z_{h\tau}^{i}(t)\right)\in\left(\mathbb{U}_{h\tau}\right)^2$ the solutions of state and adjoint equations solved by $U_{h\tau}^{i}(t)$ and $X_{h\tau}^{i}(t),\mu_{h\tau}^{i}$, respectively. From the definition of adjoint solution, it holds that $Y_{h\tau}^{i}(t)=\tilde{Y}_{h\tau}^{i}(t)+\mu_{h\tau}^{i}\tilde{M}_{h\tau}(t)$, $Z_{h\tau}^{i}(t)=\tilde{Z}_{h\tau}^{i}(t)$, where $\left(\tilde{Y}_{h\tau}^{i}(t),\tilde{Z}_{h\tau}^{i}(t)\right)\in\left(\mathbb{U}_{h\tau}\right)^2$ and $\tilde{M}_{h\tau}(t)\in\mathbb{L}_{h\tau}$ are the solutions to the following equations:
\begin{equation}\label{algo_fenxi}
\left\{\begin{aligned}
&X_{h\tau}^{i}(t_{n+1})-X_{h\tau}^{i}(t_{n})=\tau\left[\Delta_hX_{h\tau}^{i}(t_{n+1})+U_{h\tau}^{i}(t_n)\right]+\Pi_{h}^{1}\sigma(t_n)\Delta W_{n+1},\\
&X_{h\tau}^{i}(0)=\Pi_{h}^{1}X_0,\\
&\tilde{Z}_{h\tau}^{i}(t_n)=\mathbb{E}\left[\left(\tau^{-1}\tilde{Y}_{h\tau}^{i}(t_{n+1})+X_{h\tau}^{i}(t_{n+1})-\Pi_{h}^{1}X_d(t_{n+1})   \right)\Delta W_{n+1}|\mathcal{F}_{t_n}\right],\\
&\tilde{Y}_{h\tau}^{i}(t_{n})=\mathbb{E}\left[ \tilde{Y}_{h\tau}^{i}(t_{n+1})+\tau\Delta_h\tilde{Y}_{h\tau}^{i}(t_{n})+\tau\left(X_{h\tau}^{i}(t_{n+1})-\Pi_{h}^{1}X_d(t_{n+1})\right)|\mathcal{F}_{t_n}\right],\\
&\tilde{Y}_{h\tau}^{i}(T)=0,\\
&\tilde{M}_{h\tau}(t_{n+1})-\tilde{M}_{h\tau}(t_{n})=\tau\left[-\Delta_h\tilde{M}_{h\tau}(t_{n})- 1\right],\\
&\tilde{M}_{h\tau}(T)=0.
\end{aligned}\right.
\end{equation}
Next we discuss how to select $\mu_{h\tau}^{i}$ such that $U_{h\tau}^{i+1}(t)\in\mathbb{U}_{h\tau,\delta}$. First, we denote by $X_{h\tau}^{i+1}(t)\in\mathbb{X}_{h\tau}$ and $X_{h\tau}^{i+\frac{1}{2}}\in\mathbb{X}_{h\tau}$ the state variables solved by $U_{h\tau}^{i+1}(t)$ and $U_{h\tau}^{i+\frac{1}{2}}(t)$ as the control variables, respectively, i.e.,
\begin{equation}\label{algo_X12}
\left\{\begin{aligned}
&X_{h\tau}^{i+1}(t_{n+1})-X_{h\tau}^{i+1}(t_{n})
=\tau\left[\Delta_hX_{h\tau}^{i+1}(t_{n+1})+U_{h\tau}^{i+1}(t_n)\right]+\Pi_{h}^{1}\sigma(t_n)\Delta W_{n+1},\\
&X_{h\tau}^{i+1}(0)=\Pi_{h}^{1}X_0,\\
&X_{h\tau}^{i+\frac{1}{2}}(t_{n+1})-X_{h\tau}^{i+\frac{1}{2}}(t_{n})
=\tau\left[\Delta_hX_{h\tau}^{i+\frac{1}{2}}(t_{n+1})+U_{h\tau}^{i+\frac{1}{2}}(t_n)\right]+\Pi_{h}^{1}\sigma(t_n)\Delta W_{n+1},\\
&X_{h\tau}^{i+\frac{1}{2}}(0)=\Pi_{h}^{1}X_0.
\end{aligned}\right.
\end{equation}
According to the relationship between $U_{h\tau}^{i+1}(t)$ and $U_{h\tau}^{i+\frac{1}{2}}(t)$ in (\ref{ita_scheme}), it can be found that 
\begin{equation*}
\begin{aligned}
X_{h\tau}^{i+1}(t)=X_{h\tau}^{i+\frac{1}{2}}(t)-\rho\mu_{h\tau}^{i}\tilde{Q}_{h\tau}(t), 
\end{aligned}
\end{equation*}
where $\tilde{Q}_{h\tau}(t)\in\mathbb{R}_{h\tau}$ is solved by the following equation
\begin{equation}\label{algo_tildeQ}
\left\{\begin{aligned}
&\tilde{Q}_{h\tau}(t_{n+1})-\tilde{Q}_{h\tau}(t_{n})=\tau\left[\Delta_h\tilde{Q}_{h\tau}(t_{n+1})+\tilde{M}_{h\tau}(t_{n})\right],\\
&\tilde{Q}_{h\tau}(0)=0.
\end{aligned}\right.
\end{equation}
Then we need only select
\begin{equation}\label{mu_ht}
\begin{aligned}
\mu_{h\tau}^{i}=\frac{\max\left\{ \int_{0}^{T}\int_{\mathcal{D}}\mathbb{E}[X_{h\tau}^{i+\frac{1}{2}}(t)]dxdt-\delta,0\right\}}{\rho\int_{0}^{T}\int_{\mathcal{D}}\tilde{Q}_{h\tau}(t)dxdt} 
\end{aligned}
\end{equation}
to assure $U_{h\tau}^{i+1}(t)\in\mathbb{U}_{h\tau,\delta}$. The extremum principle implies that $\rho\int_{0}^{T}\int_{\mathcal{D}}\tilde{Q}_{h\tau}(t)dxdt>0$ for sufficiently small $h$ and $\tau$, which leads to $\mu_{h\tau}^{i}\geq0$.
The algorithm is summarized as follows:
\begin{algorithm}
\caption{Gradient Projection Algorithm}\label{algorithm}
\begin{algorithmic}[1]
\Require Constant $\rho > 0$, initial control value $U_{h\tau}^{0}(t) \in \mathbb{U}_{h\tau}$, and error tolerance $\varepsilon_0$
\Ensure 
\State Setting $error>\varepsilon_0$, $i=0$
\While{$error > \varepsilon_0$}
    \State Solving the first equation in (\ref{algo_fenxi}) using $U_{h\tau}^{i}(t)$ to obtain $X_{h\tau}^{i}(t)$;
    \State Solving the third equation in (\ref{algo_fenxi}) using $X_{h\tau}^{i}(t)$ to obtain $\tilde{Y}_{h\tau}^{i}(t)$;
    \State Computing $U_{h\tau}^{i+\frac{1}{2}}(t)=U_{h\tau}^{i}(t) - \rho\left(\alpha U_{h\tau}^{i}(t) + \tilde{Y}_{h\tau}^{i}(t)\right)$;
    \State Solving the second equation in (\ref{algo_X12}) using $U_{h\tau}^{i+\frac{1}{2}}(t)$ to obtain $X_{h\tau}^{i+\frac{1}{2}}(t)$;
    \State Solving the forth equation in (\ref{algo_fenxi}) to obtain $\tilde{M}_{h\tau}(t)$;
    \State Solving the equation in (\ref{algo_tildeQ}) using $\tilde{M}_{h\tau}(t)$ to obtain $\tilde{Q}_{h\tau}(t)$;
    \State Computing $\mu_{h\tau}^{i}$ using (\ref{mu_ht});
    \State Updating $U_{h\tau}^{i+1}(t)=U_{h\tau}^{i+\frac{1}{2}}(t)-\rho\mu_{h\tau}^{i} \tilde{M}_{h\tau}(t)$;
    \State Computing $error=\left(\tau \sum\limits_{n=0}^{N-1} \|U_{h\tau}^{i+1}(t_n) - U_{h\tau}^{i}(t_n)\|^2\right)^{1/2}$;
    \State Updating $i=i+1$.
\EndWhile
\end{algorithmic}
\end{algorithm}

Next it is shown that $\mathbb{R}_h$ satisfies the definition of projection operator in (\ref{pro_Rh}) for the given multiplier $\mu_{h\tau}^{i}$ in (\ref{mu_ht}).
\begin{theorem}\label{pro_project_u}
For the projection $\mathbb{R}_h$ and multiplier $\mu_{h\tau}^{i}$ given in iterative scheme (\ref{ita_scheme}) and (\ref{mu_ht}), it holds that 
\begin{equation*}
\begin{aligned}
\mathbb{E}\left[\left[ U_{h\tau}^{i+\frac{1}{2}}(t)-\mathbb{R}_hU_{h\tau}^{i+\frac{1}{2}}(t),W_{h\tau}(t)-\mathbb{R}_hU_{h\tau}^{i+\frac{1}{2}}(t)\right]\right]\leq0,\ \forall W_{h\tau}(t)\in\mathbb{U}_{h\tau,\delta}.
\end{aligned}
\end{equation*}
Further, for any $V_{h\tau}(t),P_{h\tau}(t)\in\mathbb{U}_{h\tau}$, it holds that
\begin{equation*}
\begin{aligned}
\int_{0}^{T}\mathbb{E}\left[\|\mathbb{R}_hV_{h\tau}(t)-\mathbb{R}_hP_{h\tau}(t)\|^2\right]dt
\leq\int_{0}^{T}\mathbb{E}\left[\|V_{h\tau}(t)-P_{h\tau}(t)\|^2\right]dt.
\end{aligned}
\end{equation*}
\end{theorem}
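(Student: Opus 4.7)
The plan is to reduce both assertions to a single duality identity between $\tilde{M}_{h\tau}$ and the state equation, followed by a case analysis on $\mu_{h\tau}^{i}$. First I would establish that the backward deterministic problem defining $\tilde{M}_{h\tau}$ in (\ref{algo_fenxi}) is dual to the forward state equation in the following sense: for any two controls $V_{h\tau}, U_{h\tau}\in\mathbb{U}_{h\tau}$ with corresponding states $X_{h\tau}^{V}, X_{h\tau}^{U}$, the difference $\Xi := X_{h\tau}^{V}-X_{h\tau}^{U}$ satisfies $\Xi(t_{n+1})-\Xi(t_{n}) = \tau[\Delta_{h}\Xi(t_{n+1})+V_{h\tau}(t_{n})-U_{h\tau}(t_{n})]$ with $\Xi(0)=0$ and no diffusion term. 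Testing against $\tilde{M}_{h\tau}(t_{n})$, testing the $\tilde{M}_{h\tau}$ equation against $\Xi(t_{n+1})$, summing by parts in $n$ and taking expectation should produce
\begin{equation*}
\mathbb{E}\left[[\tilde{M}_{h\tau}, V_{h\tau}-U_{h\tau}]\right] = \int_{0}^{T}\int_{\mathcal{D}}\mathbb{E}\left[X_{h\tau}^{V}-X_{h\tau}^{U}\right]dx\,dt,
\end{equation*}
where the boundary contributions at $t_{0}$ and $t_{N}$ vanish owing to $\Xi(0)=0$ and $\tilde{M}_{h\tau}(T)=0$.

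For the first assertion, the identity $U_{h\tau}^{i+\frac{1}{2}}-\mathbb{R}_{h}U_{h\tau}^{i+\frac{1}{2}}=\rho\mu_{h\tau}^{i}\tilde{M}_{h\tau}$ together with $\mu_{h\tau}^{i}\geq 0$ reduces the claim to proving $\mathbb{E}[[\tilde{M}_{h\tau}, W_{h\tau}-U_{h\tau}^{i+1}]]\leq 0$ in the active case $\mu_{h\tau}^{i}>0$; the inactive case $\mu_{h\tau}^{i}=0$ is trivial. In the active case, (\ref{mu_ht}) rearranges to $\rho\mu_{h\tau}^{i}\int_{0}^{T}\!\int_{\mathcal{D}}\tilde{Q}_{h\tau}\,dx\,dt = \int_{0}^{T}\!\int_{\mathcal{D}}\mathbb{E}[X_{h\tau}^{i+\frac{1}{2}}]\,dx\,dt-\delta$, which combined with the linearity relation $X_{h\tau}^{i+1}=X_{h\tau}^{i+\frac{1}{2}}-\rho\mu_{h\tau}^{i}\tilde{Q}_{h\tau}$ forces $\int_{0}^{T}\int_{\mathcal{D}}\mathbb{E}[X_{h\tau}^{i+1}]\,dx\,dt=\delta$. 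Applying the duality identity with $V=W_{h\tau}$ and $U=U_{h\tau}^{i+1}$ then gives $\mathbb{E}[[\tilde{M}_{h\tau},W_{h\tau}-U_{h\tau}^{i+1}]] = \int_{0}^{T}\int_{\mathcal{D}}\mathbb{E}[X_{h\tau}^{W}]\,dx\,dt-\delta\leq 0$ by $W_{h\tau}\in\mathbb{U}_{h\tau,\delta}$.

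The non-expansiveness follows by a standard variational argument. The derivation above extends to arbitrary $V_{h\tau}, P_{h\tau}\in\mathbb{U}_{h\tau}$ with analogously defined multipliers, yielding the variational inequality (\ref{pro_Rh}) at both points. Testing the inequality at $V_{h\tau}$ against $W_{h\tau}=\mathbb{R}_{h}P_{h\tau}\in\mathbb{U}_{h\tau,\delta}$ and the inequality at $P_{h\tau}$ against $W_{h\tau}=\mathbb{R}_{h}V_{h\tau}\in\mathbb{U}_{h\tau,\delta}$, then adding and rearranging, yields
\begin{equation*}
\mathbb{E}\left[[\mathbb{R}_{h}V_{h\tau}-\mathbb{R}_{h}P_{h\tau},\mathbb{R}_{h}V_{h\tau}-\mathbb{R}_{h}P_{h\tau}]\right] \leq \mathbb{E}\left[[V_{h\tau}-P_{h\tau},\mathbb{R}_{h}V_{h\tau}-\mathbb{R}_{h}P_{h\tau}]\right].
\end{equation*}
Cauchy--Schwarz in $L^{2}_{\mathbb{F}}(\Omega;L^{2}(0,T;L^{2}(\mathcal{D})))$ then closes the estimate. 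The main technical obstacle is the careful summation-by-parts derivation of the duality identity: I must track the $\mathcal{F}_{t_{n+1}}$-measurability of $\Xi(t_{n+1})$, exploit the deterministic character of $\tilde{M}_{h\tau}$ to move it in and out of conditional expectations, and verify that the telescoping cancels interior terms while the boundary terms at $t_{0}$ and $t_{N}$ vanish from the initial/terminal conditions.
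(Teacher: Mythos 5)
Your proposal is correct and follows essentially the same route as the paper: the key ingredient in both is the discrete summation-by-parts duality between $\tilde{M}_{h\tau}$ and the state equation, combined with the explicit formula (\ref{mu_ht}) for $\mu_{h\tau}^{i}$, and the non-expansiveness is obtained by the identical two-sided variational-inequality plus Cauchy--Schwarz argument. The only cosmetic difference is that you test against $W_{h\tau}(t)-U_{h\tau}^{i+1}(t)$ directly and observe $\int_{0}^{T}\int_{\mathcal{D}}\mathbb{E}[X_{h\tau}^{i+1}(t)]\,dx\,dt=\delta$ in the active case, whereas the paper splits the pairing into the two terms $M_1$ and $M_2$ and cancels them against the definition of $\mu_{h\tau}^{i}$; these are algebraically equivalent.
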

\begin{proof}
From the definition of $\mathbb{R}_h$, it holds that
\begin{equation*}
\begin{aligned}
&\mathbb{E}\left[\left[U_{h\tau}^{i+\frac{1}{2}}(t)-\mathbb{R}_hU_{h\tau}^{i+\frac{1}{2}}(t),W_{h\tau}(t)-\mathbb{R}_hU_{h\tau}^{i+\frac{1}{2}}(t)\right]\right]\\
&=\mathbb{E}\left[\left[\rho\mu_{h\tau}^{i}\tilde{M}_{h\tau}(t),W_{h\tau}(t)-U_{h\tau}^{i+\frac{1}{2}}(t)
\right]\right]
+\mathbb{E}\left[\left[\rho\mu_{h\tau}^{i}\tilde{M}_{h\tau}(t),\rho\mu_{h\tau}^{i}\tilde{M}_{h\tau}(t)\right]\right]\\
&=M_1+M_2.
\end{aligned}
\end{equation*}
In order to estimate $M_1$, the following equation is introduced: Find $X_{h\tau}^W(t)\in\mathbb{X}_{h\tau}$ such that
\begin{equation*}
\left\{\begin{aligned}
&X_{h\tau}^W(t_{n+1})-X_{h\tau}^W(t_{n})=\tau\left[\Delta_hX_{h\tau}^W(t_{n+1})+W_{h\tau}(t_n)\right]+\Pi_{h}^{1}\sigma(t_n)\Delta W_{n+1},\\
&X_{h\tau}^W(0)=\Pi_{h}^{1}X_0,
\end{aligned}\right.
\end{equation*}
which implies that $X_{h\tau}^W(t)-X_{h\tau}^{i+\frac{1}{2}}(t)\in\mathbb{R}_{h\tau}$ and it satisfies 
\begin{equation*}
\left\{\begin{aligned}
&X_{h\tau}^W(t_{n+1})-X_{h\tau}^{i+\frac{1}{2}}(t_{n+1})-\left(X_{h\tau}^W(t_{n})-X_{h\tau}^{i+\frac{1}{2}}(t_{n})\right)\\
&=\tau\left[\Delta_h\left(X_{h\tau}^W(t_{n+1})-X_{h\tau}^{i+\frac{1}{2}}(t_{n+1}) \right)+W_{h\tau}(t_{n})-U_{h\tau}^{i+\frac{1}{2}}(t_n)\right],\\
&X_{h\tau}^W(0)-X_{h\tau}^{i+\frac{1}{2}}(0)=0.
\end{aligned}\right.
\end{equation*}
From the above equation it can be derived that
\begin{equation}\label{esti_T1_1}
\begin{aligned}
M_1
&=\tau\sum_{n=0}^{N-1}\int_{\mathcal{D}}\mathbb{E}\left[\rho\mu_{h\tau}^{i}\tilde{M}_{h\tau}(t_{n})\left(
W_{h\tau}(t_n)-U_{h\tau}^{i+\frac{1}{2}}(t_n)  \right) \right]dx\\
&=\rho\mu_{h\tau}^{i}\sum_{n=0}^{N-1}\int_{\mathcal{D}}\mathbb{E}\left[\tilde{M}_{h\tau}(t_{n})\left( X_{h\tau}^W(t_{n+1})-X_{h\tau}^{i+\frac{1}{2}}(t_{n+1})\right.\right.\\
&\quad\left.\left.-X_{h\tau}^W(t_{n})+X_{h\tau}^{i+\frac{1}{2}}(t_{n}) 
-\tau\Delta_h\left(X_{h\tau}^W(t_{n+1})-X_{h\tau}^{i+\frac{1}{2}}(t_{n+1}) \right)
\right)\right]dx.
\end{aligned}
\end{equation}
Employing the definition of $\tilde{M}_{h\tau}(t)$ and the fact that $X_{h\tau}^W(0)-X_{h\tau}^{i+\frac{1}{2}}(0)=0$ yields  
\begin{align}\label{esti_T1_2}
&\sum_{n=0}^{N-1}\int_{\mathcal{D}}\tilde{M}_{h\tau}(t_{n})\left(X_{h\tau}^W(t_{n+1})-X_{h\tau}^{i+\frac{1}{2}}(t_{n+1})-X_{h\tau}^W(t_{n})+X_{h\tau}^{i+\frac{1}{2}}(t_{n}) \right)dx \nonumber\\
&=\sum_{n=0}^{N-1}\int_{\mathcal{D}}\left(\tilde{M}_{h\tau}(t_n)-\tilde{M}_{h\tau}(t_{n+1})\right)\left(X_{h\tau}^W(t_{n+1})-X_{h\tau}^{i+\frac{1}{2}}(t_{n+1})\right)dx  \nonumber\\
&=\sum_{n=0}^{N-1}\int_{\mathcal{D}}\tau\left(\Delta_h\tilde{M}_{h\tau}(t_{n})+1\right)\left(X_{h\tau}^W(t_{n+1})-X_{h\tau}^{i+\frac{1}{2}}(t_{n+1})\right)dx.
\end{align}
Bringing (\ref{esti_T1_2}) into (\ref{esti_T1_1}) gives
\begin{equation}\label{esti_T1_3}
\begin{aligned}
M_1
=\rho\mu_{h\tau}^{i}\tau\sum_{n=0}^{N-1}\int_{\mathcal{D}}\mathbb{E}\left[  X_{h\tau}^W(t_{n+1})-X_{h\tau}^{i+\frac{1}{2}}(t_{n+1})\right] dx
=\mathbb{E}\left[\left[\rho\mu_{h\tau}^{i}, X_{h\tau}^W(t)-X_{h\tau}^{i+\frac{1}{2}}(t) \right] \right].
\end{aligned}
\end{equation}

For the term $M_2$, it follows that $M_2=\tau(\rho\mu_{h\tau}^{i})^2\sum_{n=0}^{N-1}\int_{\mathcal{D}}\left(\tilde{M}_{h\tau}(t_{n})\right)^2 dx$.
From the definitions of $\tilde{M}_{h\tau}(t)$ and $\tilde{Q}_{h\tau}(t)$, it is shown that
\begin{align*}
&\tau\sum_{n=0}^{N-1}\int_{\mathcal{D}}\tilde{M}_{h\tau}(t_{n})\left(-\Delta_h\tilde{Q}_{h\tau}(t_{n+1})-\tilde{M}_{h\tau}(t_{n}) \right)dx\\
&=\sum_{n=0}^{N-1}\int_{\mathcal{D}}\tilde{M}_{h\tau}(t_{n})\left(\tilde{Q}_{h\tau}(t_{n})-\tilde{Q}_{h\tau}(t_{n+1})   \right)dx\\
&=\sum_{n=0}^{N-1}\int_{\mathcal{D}}\tilde{Q}_{h\tau}(t_{n+1})\left(\tilde{M}_{h\tau}(t_{n+1})-\tilde{M}_{h\tau}(t_n)\right)dx\\
&=\tau\sum_{n=0}^{N-1}\int_{\mathcal{D}}\tilde{Q}_{h\tau}(t_{n+1})\left(-\Delta_h\tilde{M}_{h\tau}(t_{n})- 1\right) dx,
\end{align*}
which implies that
\begin{equation*}
\begin{aligned}
M_2
=\tau(\rho\mu_{h\tau}^{i})^2\sum_{n=0}^{N-1}\int_{\mathcal{D}}\tilde{Q}_{h\tau}(t_{n+1}) dx
=\left[(\rho\mu_{h\tau}^{i})^2, \tilde{Q}_{h\tau}(t)\right].
\end{aligned}
\end{equation*}
Using the definition of $\mu_{h\tau}^{i}$ in (\ref{mu_ht}) and the estimates of $M_1$ as well as $M_2$ gives
\begin{equation*}\label{pro_prove_Rh}
\begin{aligned}
&\mathbb{E}\left[\left[U_{h\tau}^{i+\frac{1}{2}}(t)-\mathbb{R}_hU_{h\tau}^{i+\frac{1}{2}}(t), W_{h\tau}(t)-\mathbb{R}_hU_{h\tau}^{i+\frac{1}{2}}(t)\right]\right]\\
&=\mathbb{E}\left[\left[\rho\mu_{h\tau}^{i}, X_{h\tau}^W(t)-X_{h\tau}^{i+\frac{1}{2}}(t) \right] \right]
+\left[(\rho\mu_{h\tau}^{i})^2, \tilde{Q}_{h\tau}(t)\right]\\
&\leq \rho\mu_{h\tau}^{i}\left(\delta-
\int_{0}^{T}\int_{\mathcal{D}}\mathbb{E}\left[X_{h\tau}^{i+\frac{1}{2}}(t)\right]dxdt 
+\max\left\{ \int_{0}^{T}\int_{\mathcal{D}}\mathbb{E}[X_{h\tau}^{i+\frac{1}{2}}(t)]dxdt-\delta,0    \right\} \right)\\
&=0.
\end{aligned}
\end{equation*}
For any $V_{h\tau}(t),P_{h\tau}(t)\in \mathbb{U}_{h\tau}$, by the above inequality, we get
\begin{equation*}
\begin{aligned}
&\mathbb{E}\left[ \left[\mathbb{R}_hV_{h\tau}(t)-\mathbb{R}_hP_{h\tau}(t),\mathbb{R}_hV_{h\tau}(t)-\mathbb{R}_hP_{h\tau}(t)\right]  \right]\\
&=\mathbb{E}\left[\left[\mathbb{R}_hV_{h\tau}(t)-\mathbb{R}_hP_{h\tau}(t),P_{h\tau}(t)-\mathbb{R}_hP_{h\tau}(t)  \right] \right]\\ &\quad+\mathbb{E}\left[\left[\mathbb{R}_hV_{h\tau}(t)-\mathbb{R}_hP_{h\tau}(t),\mathbb{R}_hV_{h\tau}(t)-V_{h\tau}(t)        \right] \right]\\
&\quad+\mathbb{E}\left[\left[\mathbb{R}_hV_{h\tau}(t)-\mathbb{R}_hP_{h\tau}(t),V_{h\tau}(t)-P_{h\tau}(t) \right] \right]\\
&\leq \mathbb{E}\left[\left[\mathbb{R}_hV_{h\tau}(t)-\mathbb{R}_hP_{h\tau}(t),V_{h\tau}(t)-P_{h\tau}(t) \right] \right]\\
&\leq \left(\mathbb{E}\left[\left[\mathbb{R}_hV_{h\tau}(t)-\mathbb{R}_hP_{h\tau}(t),\mathbb{R}_hV_{h\tau}(t)-\mathbb{R}_hP_{h\tau}(t)\right]\right]\right)^{1/2}\\ &\quad\left(\mathbb{E}\left[\left[V_{h\tau}(t)-P_{h\tau}(t),V_{h\tau}(t)-P_{h\tau}(t)\right]\right]\right)^{1/2},
\end{aligned}
\end{equation*}
which implies that
\begin{equation*}
\begin{aligned}
\int_{0}^{T}\mathbb{E}\left[\|\mathbb{R}_hV_{h\tau}(t)-\mathbb{R}_hP_{h\tau}(t)\|^2\right]dt
\leq\int_{0}^{T}\mathbb{E}\left[\|V_{h\tau}(t)-P_{h\tau}(t)\|^2\right]dt.
\end{aligned}
\end{equation*}
\end{proof}

Based on the property of projection $\mathbb{R}_{h}$ in Theorem \ref{pro_project_u}, we give the convergence analysis of Algorithm \ref{algorithm}.
\begin{theorem}
Let the solutions solved by iterative algorithm \ref{algorithm} and fully discrete first-order optimality condition (\ref{fully_first_continuous}) be $\left(X^{i+1}_{h\tau}(t),Y^{i+1}_{h\tau}(t),Z^{i+1}_{h\tau}(t),\mu_{h\tau}^{i+1},U^{i+1}_{h\tau}(t)\right)$ and $\left(X_{h\tau}(t),Y_{h\tau}(t),Z_{h\tau}(t),\mu_{h\tau},U_{h\tau}(t)\right)$, respectively. Then for sufficiently small $h$ and $\tau$, there exists $0<\varrho\leq\rho<\frac{2}{\alpha+2e^{T}}$ such that $0<\lambda<1$ and
\begin{equation*}
\begin{aligned}
&\int_{0}^{T}\mathbb{E}\left[\|U_{h\tau}(t)-U_{h\tau}^{i+1}(t)\|^2\right]dt +|\mu_{h\tau}^{i+1}-\mu_{h\tau}|^2\\
&\quad+\max_{0\leq n\leq N}\mathbb{E}\left[\|X_{h\tau}(t_{n})-X_{h\tau}^{i+1}(t_{n})\|^2\right]+
\int_{0}^{T}\mathbb{E}\left[\|\nabla(X_{h\tau}(t)-X_{h\tau}^{i+1}(t))\|^2\right]dt\\
&\quad+\max_{0\leq n\leq N}\mathbb{E}\left[\|Y_{h\tau}(t_{n})-Y_{h\tau}^{i+1}(t_{n})\|^2\right]+\int_{0}^{T}\mathbb{E}\left[\|\nabla(Y_{h\tau}(t)-Y_{h\tau}^{i+1}(t))\|^2\right]dt\\
&\leq C\lambda^{2(i+1)}\int_{0}^{T}\mathbb{E}\left[\|U_{h\tau}(t)-U_{h\tau}^{0}(t)\|^2\right]dt,
\end{aligned}
\end{equation*}
where
\begin{equation*}
\lambda=\left\{\begin{aligned}
&1-\rho\alpha,\ \mathrm{if}\ 0<\varrho\leq\rho\leq\frac{1}{\alpha+e^T},\\
&\sqrt{F(\rho)},\ \mathrm{if}\ \frac{1}{\alpha+e^T}<\rho<\frac{2}{\alpha+2e^{T}},
\end{aligned}\right.
\end{equation*}
and $F(\rho)=\rho^2(\alpha+1)(\alpha+2e^{T})-\rho(2\alpha+2)+1$.
\end{theorem}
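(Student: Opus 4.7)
The plan is to first identify $U_{h\tau}$ as a fixed point of the iteration map, then use the non-expansivity of $\mathbb{R}_h$ (Theorem \ref{pro_project_u}) to derive a geometric contraction for the control iterates, and finally propagate this rate to the state, adjoint, and multiplier errors via the linear stability estimates of Theorems \ref{full_fullfu_XY} and \ref{estimate_mu}.

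The fixed-point step relies on the decomposition $Y_{h\tau} = \tilde Y_{h\tau} + \mu_{h\tau}\tilde M_{h\tau}$, where $\tilde Y_{h\tau}$ is the adjoint generated from $X_{h\tau}$ through the $\tilde Y$-equation in (\ref{algo_fenxi}) and $\tilde M_{h\tau}$ carries the multiplier contribution; this holds because $\tilde M_{h\tau}$ solves a backward equation with a constant unit forcing matching the integral constraint. Combined with $\alpha U_{h\tau} + Y_{h\tau} = 0$ from (\ref{fully_first_continuous}), the ``half step'' started at $U_{h\tau}$ evaluates to $U_{h\tau}^{\mathrm{half},\ast} = U_{h\tau} + \rho\mu_{h\tau}\tilde M_{h\tau}$, and by linearity of the forward equation the induced integral-constraint violation equals $\rho\mu_{h\tau}\int_0^T\int_{\mathcal{D}}\tilde Q_{h\tau}dxdt$. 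Formula (\ref{mu_ht}) therefore returns exactly the multiplier $\mu_{h\tau}$ and the projection recovers $U_{h\tau}$; the inactive case $\mu_{h\tau}=0$ is handled directly by (\ref{lisan_mu_property}).

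With the fixed point in hand, Theorem \ref{pro_project_u} gives
\begin{equation*}
\int_0^T\mathbb{E}[\|U_{h\tau}-U_{h\tau}^{i+1}\|^2]dt \le (1-\rho\alpha)^2 A_i + \rho^2 B_i - 2\rho(1-\rho\alpha) C_i,
\end{equation*}
where $A_i,B_i,C_i$ are the $\mathbb{E}$-integrals of $\|\delta U^i\|^2$, $\|\delta\tilde Y^i\|^2$ and $(\delta U^i,\delta\tilde Y^i)$, with $\delta(\cdot)^i$ denoting the difference between iterate and optimal value. The discrete It\^o identity used in (\ref{discrete_ito}) yields $C_i = \tau\sum_n\mathbb{E}[\|\delta X^i(t_{n+1})\|^2]\ge 0$; a discrete backward Gronwall on the $\tilde Y$-equation (mirroring (\ref{dis_gronwall})) gives $B_i \le 2e^T C_i$; and stability (Theorem \ref{full_fullfu_XY}(\ref{full_fullfu_XY1})) gives $C_i \le A_i$. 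Two regimes emerge: when $\rho \le 1/(\alpha+e^T)$ the inequality $2\rho(1-\rho\alpha)C_i \ge \rho^2 B_i$ holds, so the non-negative cross term absorbs the $B_i$-contribution and the bound collapses to $(1-\rho\alpha)^2 A_i$, giving $\lambda = 1-\rho\alpha$; when $1/(\alpha+e^T) < \rho < 2/(\alpha+2e^T)$, inserting $B_i \le 2e^T C_i$ and $C_i \le A_i$ and rearranging the resulting quadratic in $\rho$ produces exactly $F(\rho)A_i$, with $F(\rho)<1$ precisely on this interval (the upper endpoint $2/(\alpha+2e^T)$ is the positive root of $F$).

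Iterating yields $A_{i+1} \le \lambda^{2(i+1)} A_0$, and the remaining quantities follow by linear stability: the state and its $H^1$ error come from (\ref{full_fullfu_XY1}) with the perturbation $U_{h\tau} - U_{h\tau}^{i+1}$; the adjoint error comes from (\ref{full_fullfu_XY2}) once the multiplier error is controlled; and $|\mu_{h\tau} - \mu_{h\tau}^{i+1}|^2$ is obtained by imitating the $\varphi_{h\tau}$-testing argument of Theorem \ref{estimate_mu} applied to the difference of (\ref{fully_first_continuous}) and the iterate equations, which expresses the multiplier error through weak-type state and adjoint differences that are themselves dominated by $A_{i+1}$. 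The main obstacle will be obtaining the sharp Gronwall constants so that the resulting quadratic matches $F(\rho)$ exactly rather than a looser surrogate, together with the careful bookkeeping of the sign of the cross term across both regimes of $\rho$ so that the two expressions for $\lambda$ are genuinely contractive on the stated range; once those constants are in place, the case split and the induction are routine.
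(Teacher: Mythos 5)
Your overall architecture matches the paper's proof almost exactly: identify $U_{h\tau}$ as a fixed point of the iteration map, expand $\|U_{h\tau}^{i+1}-U_{h\tau}\|^2$ using the non-expansivity of $\mathbb{R}_h$, use the discrete duality identity to show the cross term equals $\int_0^T\mathbb{E}[\|X_{h\tau}-X_{h\tau}^{i}\|^2]dt\ge 0$, invoke the stability bounds $C_i\le A_i$ and $B_i\le 2e^{T}C_i$, and split into the two regimes of $\rho$. Your fixed-point verification via the explicit formula (\ref{mu_ht}) (active case gives $\mu^{\mathrm{new}}=\mu_{h\tau}$, inactive case is covered by (\ref{lisan_mu_property})) differs cosmetically from the paper, which instead checks the variational characterization (\ref{pro_Rh}); both are sound. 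Two small slips: $2/(\alpha+2e^{T})$ is not a root of $F$ (the discriminant of $F$ is $4(\alpha+1)(1-2e^{T})<0$, so $F>0$ everywhere); it is the nonzero root of $F(\rho)-1$, which is what actually matters for $F(\rho)<1$. And the sharp constants produce $\rho^2(\alpha^2+2\alpha+2e^{T})-\rho(2\alpha+2)+1$, which is dominated by $F(\rho)$ since $e^{T}\ge 1$, so the mismatch you worried about is harmless.

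The genuine gap is in your treatment of the multiplier. You assert that the $\varphi_{h\tau}$-testing argument expresses $|\mu_{h\tau}^{i+1}-\mu_{h\tau}|^2$ through weak-type state and adjoint differences ``that are themselves dominated by $A_{i+1}$,'' but the adjoint difference is \emph{not} directly dominated by $A_{i+1}$: the equation for $Y_{h\tau}-Y_{h\tau}^{i+1}$ in (\ref{conv_adjoint1}) carries the source $\mu_{h\tau}^{i+1}-\mu_{h\tau}$, so the stability estimate (\ref{full_fullfu_XY2}) bounds the adjoint error by the multiplier error and vice versa. The two constants are not small, so this circle cannot be closed by absorption. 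The paper breaks it with a specific device you do not mention: since the next iterate satisfies $U_{h\tau}^{i+2}=U_{h\tau}^{i+1}-\rho\bigl(\alpha U_{h\tau}^{i+1}+Y_{h\tau}^{i+1}\bigr)$, the optimality residual obeys $\rho^2\int_0^T\mathbb{E}[\|\alpha U_{h\tau}^{i+1}+Y_{h\tau}^{i+1}\|^2]dt\le 2\int_0^T\mathbb{E}[\|U_{h\tau}^{i+1}-U_{h\tau}\|^2]dt+2\int_0^T\mathbb{E}[\|U_{h\tau}-U_{h\tau}^{i+2}\|^2]dt\le 4\lambda^{2(i+1)}A_0$, and then $Y_{h\tau}^{i+1}-Y_{h\tau}=(Y_{h\tau}^{i+1}+\alpha U_{h\tau}^{i+1})+\alpha(U_{h\tau}-U_{h\tau}^{i+1})$ bounds the adjoint difference by already-controlled quantities without touching the multiplier. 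Only after $|\mu_{h\tau}^{i+1}-\mu_{h\tau}|$ is controlled this way can (\ref{full_fullfu_XY2}) be applied to get the $\max$-norm and $H^1$ bounds for the adjoint. Without this step your argument stalls at the multiplier estimate.
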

\begin{proof}
We prove the convergence of discrete control, state, multiplier, and adjoint state in four separate steps.

\textbf{Step 1.} Convergence for control variable.
The adjoint process $Y_{h\tau}(t)$ can be split into $\tilde{Y}_{h\tau}(t)$ and $\tilde{M}_{h\tau}(t)$, i.e., $Y_{h\tau}(t)=\tilde{Y}_{h\tau}(t)+\mu_{h\tau}\tilde{M}_{h\tau}(t)$, where $\tilde{Y}_{h\tau}(t)\in\mathbb{U}_{h\tau}$ is solved by $X_{h\tau}(t)$ and $\tilde{M}_{h\tau}(t)\in\mathbb{L}_{h\tau}$ is given in (\ref{algo_fenxi}).
We first show that
\begin{equation}\label{project_u}
\begin{aligned}
U_{h\tau}(t)=\mathbb{R}_h\left( U_{h\tau}(t)-\rho\left(\alpha U_{h\tau}(t)+\tilde{Y}_{h\tau}(t)\right)\right).
\end{aligned}
\end{equation}
For any $W_{h\tau}(t)\in\mathbb{U}_{h\tau,\delta}$, following the derivation of (\ref{esti_T1_1})-(\ref{esti_T1_3}) and using (\ref{fully_first_continuous}) yields 
\begin{equation*}
\begin{aligned}
&\mathbb{E}\left[\left[U_{h\tau}(t)-\rho\left(\alpha U_{h\tau}(t)+\tilde{Y}_{h\tau}(t)\right)-U_{h\tau}(t),W_{h\tau}(t)-U_{h\tau}(t)\right]\right]\\
&=\rho\mathbb{E}\left[\left[Y_{h\tau}(t)-\tilde{Y}_{h\tau}(t),W_{h\tau}(t)-U_{h\tau}(t)\right]\right]\\
&=\mathbb{E}\left[\left[\rho\mu_{h\tau}\tilde{M}_{h\tau}(t),W_{h\tau}(t)-U_{h\tau}(t)\right]\right]\\
&=\rho\mu_{h\tau}\tau\sum_{n=0}^{N-1}\int_{\mathcal{D}}\mathbb{E}\left[X_{h\tau}^{W}(t_{n+1})-X_{h\tau}(t_{n+1})    \right]dx\\
&=\rho\mathbb{E}\left[\left[\mu_{h\tau}, X_{h\tau}^{W}(t)-X_{h\tau}(t)\right]\right]\leq0,
\end{aligned}
\end{equation*}
which implies that (\ref{project_u}) holds based on the (\ref{pro_Rh}).

By using (\ref{ita_scheme}), (\ref{project_u}) and Theorem \ref{pro_project_u}, it follows that
\begin{align*}
&\int_{0}^{T}\mathbb{E}\left[\|U_{h\tau}^{i+1}(t)-U_{h\tau}(t)\|^2\right]dt\\
&=\int_{0}^{T}\mathbb{E}\left[\left\|\mathbb{R}_h\left(U_{h\tau}^{i}(t)-\rho\left(\alpha U_{h\tau}^{i}(t)+\tilde{Y}_{h\tau}^{i}(t)\right)\right) 
-\mathbb{R}_h\left( U_{h\tau}(t)-\rho\left(\alpha U_{h\tau}(t)+\tilde{Y}_{h\tau}(t)\right)\right)\right\|^2\right]dt\\
&\leq\int_{0}^{T}\mathbb{E}\left[\left\|U_{h\tau}^{i}(t)-\rho\left(\alpha U_{h\tau}^{i}(t)+\tilde{Y}_{h\tau}^{i}(t)\right)-\left( U_{h\tau}(t)-\rho\left(\alpha U_{h\tau}(t)+\tilde{Y}_{h\tau}(t)\right)\right) \right\|^2 \right]dt\\
&=\int_{0}^{T}\mathbb{E}\left[\left\| (1-\rho\alpha)\left(U_{h\tau}^{i}(t)-U_{h\tau}(t)\right)-\rho\left(\tilde{Y}_{h\tau}^{i}(t)-\tilde{Y}_{h\tau}(t)\right)\right\|^2 \right]dt\\
&= (1-\rho\alpha)^2\int_{0}^{T}\mathbb{E}\left[\|U_{h\tau}^{i}(t)-U_{h\tau}(t)\|^2 \right]dt+\rho^2\int_{0}^{T}\mathbb{E}\left[\left\|\tilde{Y}_{h\tau}^{i}(t)-\tilde{Y}_{h\tau}(t)\right\|^2 \right]dt\\
&\quad-2(1-\rho\alpha)\rho\mathbb{E}\left[\left[U_{h\tau}^{i}(t)-U_{h\tau}(t), \tilde{Y}_{h\tau}^{i}(t)-\tilde{Y}_{h\tau}(t) \right]\right].
\end{align*}
From the definitions of $X_{h\tau}(t),X_{h\tau}^{i}(t)$ and $\tilde{Y}_{h\tau}^{i}(t),\tilde{Y}_{h\tau}(t)$, for any $\phi_h,\psi_h\in\mathbb{V}_h^1$ we have
\begin{equation*}
\left\{\begin{aligned}
&\left(X_{h\tau}(t_{n+1})-X_{h\tau}^{i}(t_{n+1})-\left(X_{h\tau}(t_{n})-X_{h\tau}^{i}(t_{n})\right), \phi_h\right)\\
&=\tau\left(\nabla X_{h\tau}^{i}(t_{n+1})-\nabla X_{h\tau}(t_{n+1}),\nabla\phi_h \right)
+\tau\left( U_{h\tau}(t_n)-U_{h\tau}^{i}(t_n),\phi_h\right),\\
&X_{h\tau}(0)-X_{h\tau}^{i}(0)=0,\\
&\left(\mathbb{E}\left[\tilde{Y}_{h\tau}(t_{n+1})-\tilde{Y}_{h\tau}^{i}(t_{n+1})|\mathcal{F}_{t_n} \right]-\left(\tilde{Y}_{h\tau}(t_{n})-\tilde{Y}_{h\tau}^{i}(t_{n})\right), \psi_h\right)\\
&=\tau\left(\nabla\tilde{Y}_{h\tau}(t_{n})-\nabla\tilde{Y}_{h\tau}^{i}(t_{n}),\nabla\psi_h\right)
+\tau\left(\mathbb{E}\left[X_{h\tau}^{i}(t_{n+1})-X_{h\tau}(t_{n+1})|\mathcal{F}_{t_n} \right],\psi_h\right),\\
&\tilde{Y}_{h\tau}(T)-\tilde{Y}_{h\tau}^{i}(T)=0.
\end{aligned}\right.
\end{equation*}
Fixing one realization $\omega\in\Omega$, setting $\phi_h=\tilde{Y}_{h\tau}(t_n)-\tilde{Y}_{h\tau}^{i}(t_n)$, $\psi_h=X_{h\tau}(t_{n+1})-X_{h\tau}^{i}(t_{n+1})$ and using the $\mathcal{F}_{t_n}$-measurability of $X_{h\tau}(t_{n+1})-X_{h\tau}^{i}(t_{n+1})$ gives
\begin{equation}\label{conv_u1}
\begin{aligned}
\mathbb{E}\left[\left[U_{h\tau}^{i}(t)-U_{h\tau}(t), \tilde{Y}_{h\tau}^{i}(t)-\tilde{Y}_{h\tau}(t)  \right]\right]=\int_{0}^{T}\mathbb{E}\left[\left\|X_{h\tau}(t)-X_{h\tau}^{i}(t)\right\|^2\right]dt.
\end{aligned}
\end{equation}
Therefore, by (\ref{conv_u1}), the error between $U_{h\tau}^{i+1}(t)$ and $U_{h\tau}(t)$ can be bounded as
\begin{equation*}
\begin{aligned}
&\int_{0}^{T}\mathbb{E}\left[\|U_{h\tau}^{i+1}(t)-U_{h\tau}(t)\|^2\right]dt
+2\rho(1-\rho\alpha)\int_{0}^{T}\mathbb{E}\left[\left\|X_{h\tau}(t)-X_{h\tau}^{i}(t)\right\|^2\right]dt\\
&\leq (1-\rho\alpha)^2\int_{0}^{T}\mathbb{E}\left[\|U_{h\tau}^{i}(t)-U_{h\tau}(t)\|^2 \right]dt+\rho^2\int_{0}^{T}\mathbb{E}\left[\left\|\tilde{Y}_{h\tau}^{i}(t)-\tilde{Y}_{h\tau}(t)\right\|^2 \right]dt.
\end{aligned}
\end{equation*}
Performing the analysis like Theorem \ref{full_fullfu_XY}, one gets 
\begin{align*}
&\max_{0\leq n \leq N}\mathbb{E}\left[\|X_{h\tau}(t_{n})-X_{h\tau}^{i}(t_{n})\|^2\right]
\leq \tau\sum_{n=0}^{N-1}\mathbb{E}\left[\|U_{h\tau}(t_n)-U_{h\tau}^{i}(t_n)\|^2 \right],\\
&\max_{0\leq n \leq N}\mathbb{E}\left[\|\tilde{Y}_{h\tau}(t_{n})-\tilde{Y}_{h\tau}^{i}(t_{n})\|^2\right]
\leq 2e^{T}\tau\sum_{n=0}^{N-1}\mathbb{E}\left[\|X_{h\tau}^{i}(t_{n+1})-X_{h\tau}(t_{n+1})\|^2\right],
\end{align*}
which implies that
\begin{equation}\label{conv_u2}
\begin{aligned}
&\int_{0}^{T}\mathbb{E}\left[\|U_{h\tau}^{i+1}(t)-U_{h\tau}(t)\|^2\right]dt
+2\rho\left(1-\rho\alpha-\rho e^{T}\right)\int_{0}^{T}\mathbb{E}\left[
\left\|X_{h\tau}(t)-X_{h\tau}^{i}(t)\right\|^2\right]dt\\
&\leq  (1-\rho\alpha)^2\int_{0}^{T}\mathbb{E}\left[\left\|U_{h\tau}^{i}(t)-U_{h\tau}(t) \right\|^2 \right]dt.
\end{aligned}
\end{equation}
If $0<\varrho\leq\rho\leq\frac{1}{\alpha+e^{T}}$, it holds that $0<1-\rho\alpha<1$ and $2\rho\left(1-\rho\alpha-\rho e^{T}\right)\geq0$, from (\ref{conv_u2}) it holds that
\begin{equation}\label{conv_u3}
\begin{aligned}
\int_{0}^{T}\mathbb{E}\left[\|U_{h\tau}^{i+1}(t)-U_{h\tau}(t)\|^2\right]dt
&\leq(1-\rho\alpha)^2\int_{0}^{T}\mathbb{E}\left[\|U_{h\tau}^{i}(t)-U_{h\tau}(t)\|^2 \right]dt\\
&\leq (1-\rho\alpha)^{2(i+1)}\int_{0}^{T}\mathbb{E}\left[\|U_{h\tau}^{0}(t)-U_{h\tau}(t)\|^2 \right]dt.
\end{aligned}
\end{equation}
If $\rho>\frac{1}{\alpha+e^{T}}$, we have
\begin{equation*}
\begin{aligned}
\int_{0}^{T}\mathbb{E}\left[\|U_{h\tau}^{i+1}(t)-U_{h\tau}(t)\|^2\right]dt\leq F(\rho)\int_{0}^{T}\mathbb{E}\left[\|U_{h\tau}^{i}(t)-U_{h\tau}(t) \|^2 \right]dt,
\end{aligned}
\end{equation*}
where $F(\rho)=\rho^2(\alpha+1)(\alpha+2e^{T})-\rho(2\alpha+2)+1$. From the properties of quadratic functions, $0<F(\rho)<1$ can be established by selecting $\rho$ that satisfies $\frac{1}{\alpha+e^{T}}<\rho<\frac{2}{\alpha+2e^{T}}$.
Therefore, it's obtained that
\begin{equation}\label{conv_u4}
\begin{aligned}
\int_{0}^{T}\mathbb{E}\left[\|U_{h\tau}^{i+1}(t)-U_{h\tau}(t)\|^2\right]dt
\leq \sqrt{F(\rho)}^{2(i+1)}\int_{0}^{T}\mathbb{E}\left[\|U_{h\tau}^{0}(t)-U_{h\tau}(t)\|^2 \right]dt,
\end{aligned}
\end{equation}
where $0<\sqrt{F(\rho)}<1$. Therefore, combining (\ref{conv_u3}) and (\ref{conv_u4}) gives
\begin{equation}\label{conv_u5}
\begin{aligned}
\int_{0}^{T}\mathbb{E}\left[\|U_{h\tau}^{i+1}(t)-U_{h\tau}(t)\|^2\right]dt\leq \lambda^{2(i+1)}\int_{0}^{T}\mathbb{E}\left[\|U_{h\tau}^{0}(t)-U_{h\tau}(t)\|^2 \right]dt,
\end{aligned}
\end{equation}
where $\lambda=1-\rho\alpha$ when $0<\rho\leq\frac{1}{\alpha+e^{T}}$ and $\lambda=\sqrt{F(\rho)}$ when $\frac{1}{\alpha+e^{T}}<\rho<\frac{2}{\alpha+2e^{T}}$.

\textbf{Step 2.} Convergence for state variable. Performing the similar derivations in Theorem \ref{full_fullfu_XY} for $X_{h\tau}(t)-X^{i+1}_{h\tau}(t)$ and utilizing estimate (\ref{conv_u5}) yields 
\begin{equation}\label{conv_state1}
\begin{aligned}
&\max_{0\leq n\leq N}\mathbb{E}\left[\|X_{h\tau}(t_{n})-X_{h\tau}^{i+1}(t_{n})\|^2\right]+
\int_{0}^{T}\mathbb{E}\left[\|\nabla\left(X_{h\tau}(t)-X_{h\tau}^{i+1}(t)\right)\|^2\right]dt\\
&\leq 2\int_{0}^{T}\mathbb{E}\left[\|U_{h\tau}(t)-U_{h\tau}^{i+1}(t)\|^2\right]dt
\leq 2\lambda^{2(i+1)}\int_{0}^{T}\mathbb{E}\left[\|U_{h\tau}^{0}(t)-U_{h\tau}(t)\|^2 \right]dt.
\end{aligned}
\end{equation}

\textbf{Step 3.} Convergence for multiplier. From the definitions of $Y_{h\tau}(t)$ and $Y_{h\tau}^{i+1}(t)$, it follows that
\begin{equation}\label{conv_adjoint1}
\left\{\begin{aligned}
&\left(\mathbb{E}\left[Y_{h\tau}(t_{n+1})-Y_{h\tau}^{i+1}(t_{n+1})|\mathcal{F}_{t_n}\right]-\left(Y_{h\tau}(t_{n})-Y_{h\tau}^{i+1}(t_{n})\right), \psi_h\right)\\
&=\tau\left(\mathbb{E}\left[X_{h\tau}^{i+1}(t_{n+1})-X_{h\tau}(t_{n+1})+\mu_{h\tau}^{i+1}-\mu_{h\tau}|\mathcal{F}_{t_n}\right],\psi_h\right)\\
&\quad+\tau\left(\nabla Y_{h\tau}(t_{n})-\nabla Y_{h\tau}^{i+1}(t_{n}),\nabla\psi_h\right),\\
&Y_{h\tau}(T)-Y_{h\tau}^{i+1}(T)=0.
\end{aligned}\right.
\end{equation}
Like the derivations in Theorem \ref{estimate_mu}, we set $\psi_h=\varphi_{h\tau}(t_{n+1})$ and $\phi_h=Y_{h\tau}(t_n)-Y_{h\tau}^{i+1}(t_{n})$ in (\ref{varphi_fully}) and it's derived that
\begin{equation}\label{conv_mu1}
\begin{aligned}
&|\mu_{h\tau}^{i+1}-\mu_{h\tau}|^2\\
&\leq C\left(\tau\sum_{n=0}^{N-1}\left(\|\mathbb{E}\left[X_{h\tau}(t_{n+1})-X_{h\tau}^{i+1}(t_{n+1})\right]\|^2+\|\mathbb{E}\left[Y_{h\tau}^{i+1}(t_{n})-Y_{h\tau}(t_{n})\right]\|^2
\right) \right.\\  
&\left.\quad+|\mu_{h\tau}^{i+1}-\mu_{h\tau}|^2 (h^4+\tau^2)\right).
\end{aligned}
\end{equation}
By iterative scheme (\ref{ita_scheme}) and the fact that $Y_{h\tau}^{i+1}(t)=\tilde{Y}_{h\tau}^{i+1}(t)+\mu_{h\tau}^{i+1}\tilde{M}_{h\tau}(t)$, we have
\begin{equation*}
\begin{aligned}
U_{h\tau}^{i+2}(t)
&=U_{h\tau}^{i+1}(t)-\rho\left(\alpha U_{h\tau}^{i+1}(t)+\tilde{Y}_{h\tau}^{i+1}(t)\right)-\rho\mu_{h\tau}^{i+1}\tilde{M}_{h\tau}(t)\\
&=U_{h\tau}^{i+1}(t)-\rho\left(\alpha U_{h\tau}^{i+1}(t)+ Y_{h\tau}^{i+1}(t)\right),
\end{aligned}
\end{equation*}
which implies that 
\begin{equation}\label{conv_mu2}
\begin{aligned}
&\rho^2\int_{0}^{T}\mathbb{E}\left[\|\alpha U_{h\tau}^{i+1}(t)+ Y_{h\tau}^{i+1}(t)\|^2\right]dt\\
&\leq 2\int_{0}^{T}\mathbb{E}\left[  \|U_{h\tau}^{i+1}(t)-U_{h\tau}(t)\|^2\right]dt+2\int_{0}^{T}\mathbb{E}\left[\|U_{h\tau}(t)-U_{h\tau}^{i+2}(t)\|^2\right]dt\\
&\leq 4\lambda^{2(i+1)}\int_{0}^{T}\mathbb{E}\left[\|U_{h\tau}^{0}(t)-U_{h\tau}(t)\|^2 \right]dt.
\end{aligned}
\end{equation}
For sufficiently small $h$ and $\tau$, employing (\ref{fully_first_continuous}), (\ref{conv_u5}), (\ref{conv_state1}), (\ref{conv_mu1}) and (\ref{conv_mu2}) leads to
\begin{align}\label{conv_mu3}
&|\mu_{h\tau}^{i+1}-\mu_{h\tau}|^2  \nonumber\\
&\leq C\left(\max_{0\leq n\leq N}\mathbb{E}\left[\|X_{h\tau}(t_{n})-X_{h\tau}^{i+1}(t_{n})\|^2  \right]
+\int_{0}^{T}\mathbb{E}\left[\|Y_{h\tau}^{i+1}(t)-Y_{h\tau}(t)\|^2\right]dt  \right)  \nonumber\\
&\leq C\left(2\lambda^{2(i+1)}\int_{0}^{T}\mathbb{E}\left[\|U_{h\tau}^{0}(t)-U_{h\tau}(t)\|^2 \right]dt+ 2\int_{0}^{T}\mathbb{E}\left[\|Y_{h\tau}^{i+1}(t)+\alpha U_{h\tau}^{i+1}(t)\|^2\right]dt           \right.\nonumber\\
&\left.\quad+2\int_{0}^{T}\mathbb{E}\left[\|\alpha U_{h\tau}(t)-\alpha U_{h\tau}^{i+1}(t)\|^2\right]dt \right)\nonumber\\
&\leq C\lambda^{2(i+1)}\int_{0}^{T}\mathbb{E}\left[\|U_{h\tau}^{0}(t)-U_{h\tau}(t)\|^2 \right]dt.
\end{align}

\textbf{Step 4.} Convergence for adjoint state. Following a similar derivation as in Theorem \ref{full_fullfu_XY}, applying the estimate to $Y_{h\tau}(t)-Y_{h\tau}^{i+1}(t)$ defined in (\ref{conv_adjoint1}) and combining the estimates (\ref{conv_state1}) as well as (\ref{conv_mu3}) leads to
\begin{equation}\label{conv_adjoint2}
\begin{aligned}
&\max_{0\leq n\leq N}\mathbb{E}\left[\|Y_{h\tau}(t_{n})-Y_{h\tau}^{i+1}(t_{n})\|^2\right]+\int_{0}^{T}\mathbb{E}\left[\|\nabla(Y_{h\tau}(t)-Y_{h\tau}^{i+1}(t))\|^2\right]dt\\
&\leq C\left(\tau\sum_{n=0}^{N-1}\mathbb{E}\left[\|X_{h\tau}(t_{n+1})-X_{h\tau}^{i+1}(t_{n+1})\|^2 \right]+|\mu_{h\tau}-\mu_{h\tau}^{i+1}|^2\right)\\
&\leq C\lambda^{2(i+1)}\int_{0}^{T}\mathbb{E}\left[\|U_{h\tau}^{0}(t)-U_{h\tau}(t)\|^2 \right]dt.
\end{aligned}
\end{equation}
Combining (\ref{conv_u5}), (\ref{conv_state1}), (\ref{conv_mu3}) and (\ref{conv_adjoint2}) yields the final convergence results.
\end{proof}

\section{Numerical experiments}
In this section, numerical examples are presented to verify the accuracy of theoretical analysis and the effectiveness of gradient projection algorithm \ref{algorithm}. 
In the following examples, 
we denote by $R_{X,\tau},R_{X,h}$, $R_{Y,\tau},R_{Y,h}$ and $R_{\mu,\tau},R_{\mu,h}$ the convergence orders of state, adjoint state and multiplier with respect to $\tau$ and $h$ in $|\cdot|$ and $\left(\max\limits_{0\leq n\leq N}\mathbb{E}[\|\cdot\|^2]\right)^{1/2}$ norms. $R_{X,\tau}^{1},R_{X,h}^{1}$ and $R_{Y,\tau}^{1},R_{Y,h}^{1}$ denote the convergence orders of state and adjoint state variables in  $\left(\tau\sum\limits_{n=0}^{N-1}\mathbb{E}[\|\cdot\|_{H_{0}^{1}(\mathcal{D})}^2]\right)^{1/2}$ norm, respectively.
Since the exact solutions of SOCP with stochastic control process and state constraint are often difficult to construct, the integral state-constrained SOCPs with deterministic control variable are considered in one-dimensional and two-dimensional space domain below, and such problems are applied in fields such as finance \cite{app1} and engineering \cite{app2}.

\begin{example}\label{exm1}
The following stochastic optimal control problem with integral state constraint is considered
\begin{equation*}
\begin{aligned}
\min\limits_{\substack{ X(t)\in K\\ U(t) \in L^2\left(0,T;L^2(\mathcal{D})\right) }}
J(X(t),U(t))=\frac{1}{2}\mathbb{E}\left[\int_{0}^{T}\left(\|X(t)-X_d(t)\|^2+\|U(t)\|^2\right)dt\right]
\end{aligned}
\end{equation*}
subject to
\begin{equation*}
\left\{\begin{aligned}
dX(t)&=\left[\Delta X(t)+f(t)+U(t)\right]dt+\beta \sin(\pi x)dW,\ t\in(0,1],\\
X(0)&=0,
\end{aligned}\right.
\end{equation*}
where $\beta$ is a constants.
\end{example}
We set $\mathcal{D}=[0,1]$ and the exact solutions, functions $f(t),X_d(t)$ as well as constraint parameter $\delta$ are given by
\begin{equation*}
\left\{\begin{aligned}
&U(t)=t(T-t)\sin(\pi x),\\
&X(t)=(t+\beta W_t)\sin(\pi x),\\
&f(t)=\sin(\pi x)\left(1+t(t-T)+\pi^2(t+\beta W_t)\right),\\
&\delta=\int_{0}^{1}\int_{\mathcal{D}}\mathbb{E}[X(t)]dxdt=\frac{1}{\pi},\\
&X_d(t)=\sin(\pi x)\left(t-T+2(t+W_t)-\pi^2(t-T)(t+\beta W_t)\right)+\mu,
\end{aligned}\right.
\end{equation*}
where $\beta=0.1$ and multiplier is randomly chosen as $\mu=1$. The number of Monte Carlo simulation paths is $2*10^{3}$ and we set $\varepsilon_0=10^{-6}$ in Algorithm \ref{algorithm}. We first successively set ${\tau}={h}=\frac{1}{40},\frac{1}{45},\frac{1}{50},\frac{1}{60},\frac{1}{70}$. The profiles of numerical control and the expected numerical state are shown in Figure \ref{figure1}. 
According to Theorem \ref{main_order}, the convergence orders of the state, adjoint state and multiplier with respect to space and time is the same, all of which are first-order. The convergence rates with respect to time shown in Figure \ref{figure1} agree with the theoretical analysis.

Then in order to test the variation of the convergence orders with respect to the discretization parameters $\tau$ and $h$, we set ${h}=\frac{1}{10},\frac{1}{15},\frac{1}{20},\frac{1}{25},\frac{1}{30}$, ${\tau}={h^2}$ and ${\tau}={h^4}$, respectively. The numerical results are given in Figure \ref{figure2}, which can be found that the convergence curves of the numerical solutions are parallel to the curves of the standard convergence orders.
To test the role of the state constraint, we choose the constraint parameter 
$$\delta=0.2,0.1,-0.1,-0.2,$$ 
successively, which implies that the explicit exact solutions are unknown. The expected numerical states and integral values with ${\tau}={h}=\frac{1}{40},\frac{1}{45},\frac{1}{50},\frac{1}{60},\frac{1}{70}$ are given in Figure \ref{figure3} and Table \ref{table1}. Clearly the resulting numerical states are all in the constraint set, which validates the effectiveness of our algorithm.

\begin{figure}[!htbp]
\flushleft
\label{1a}
\includegraphics[width=4.5cm,height=4cm]{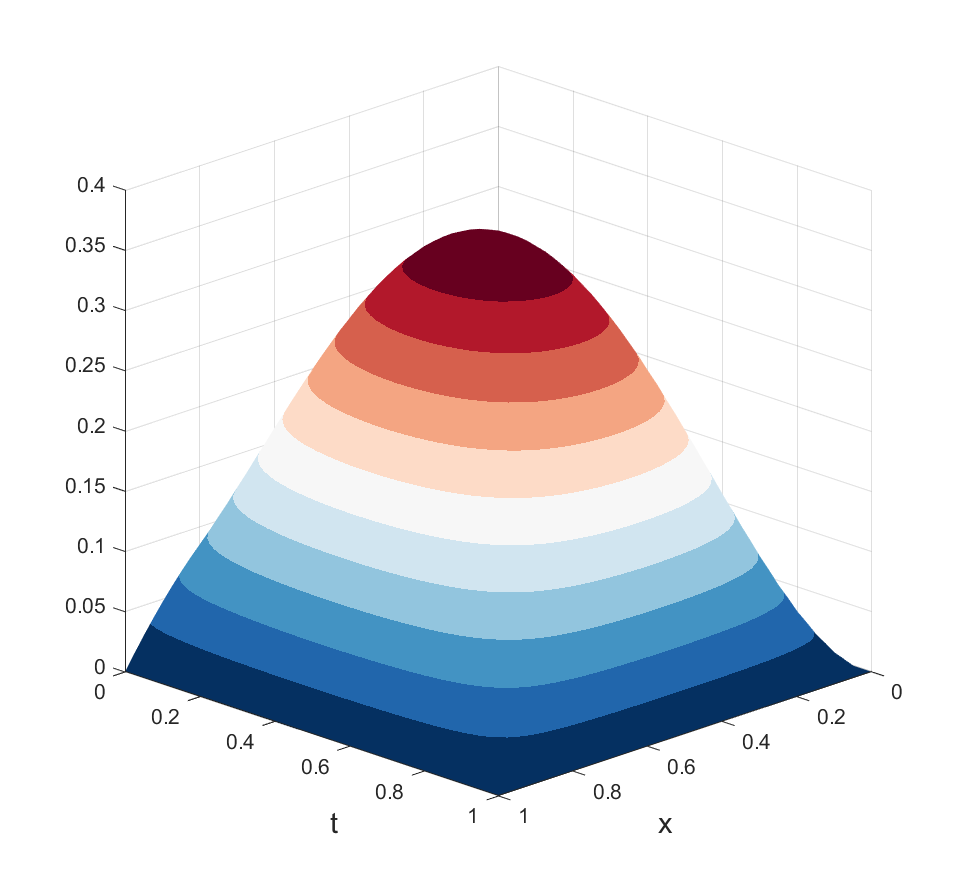}
\hspace{-5mm}
\label{1b}
\includegraphics[width=4.5cm,height=4cm]{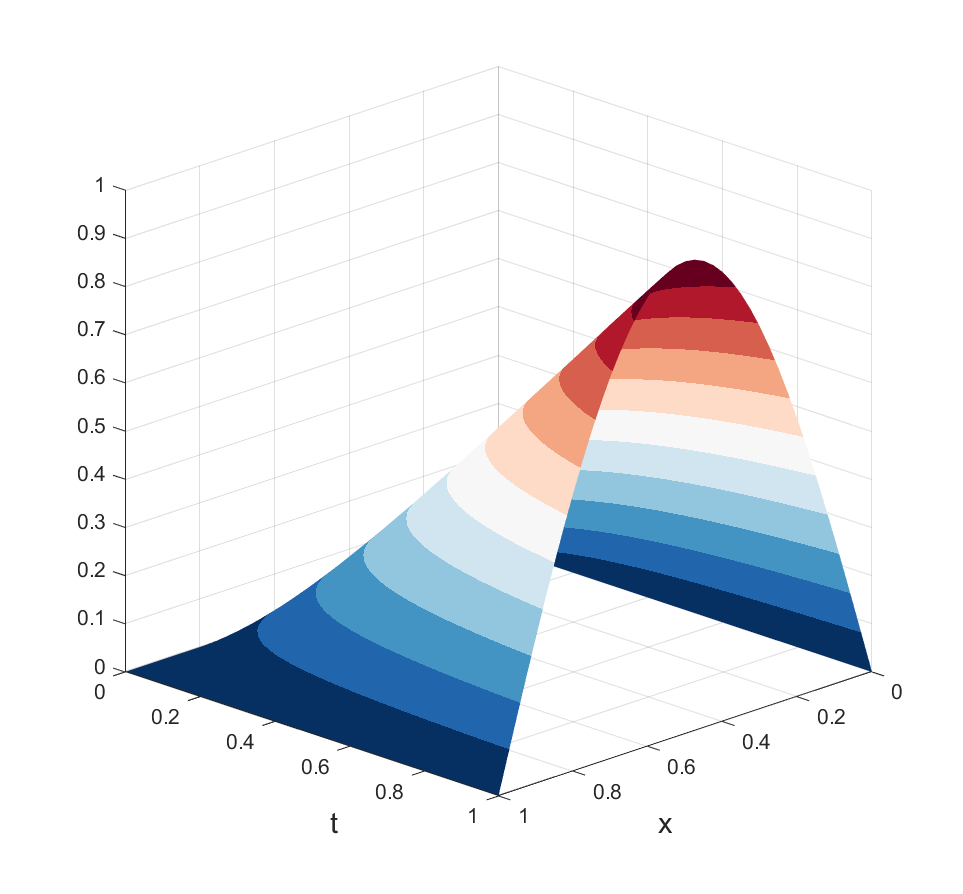}
\hspace{-5mm}
\label{1c}
\includegraphics[width=4.5cm,height=4cm]{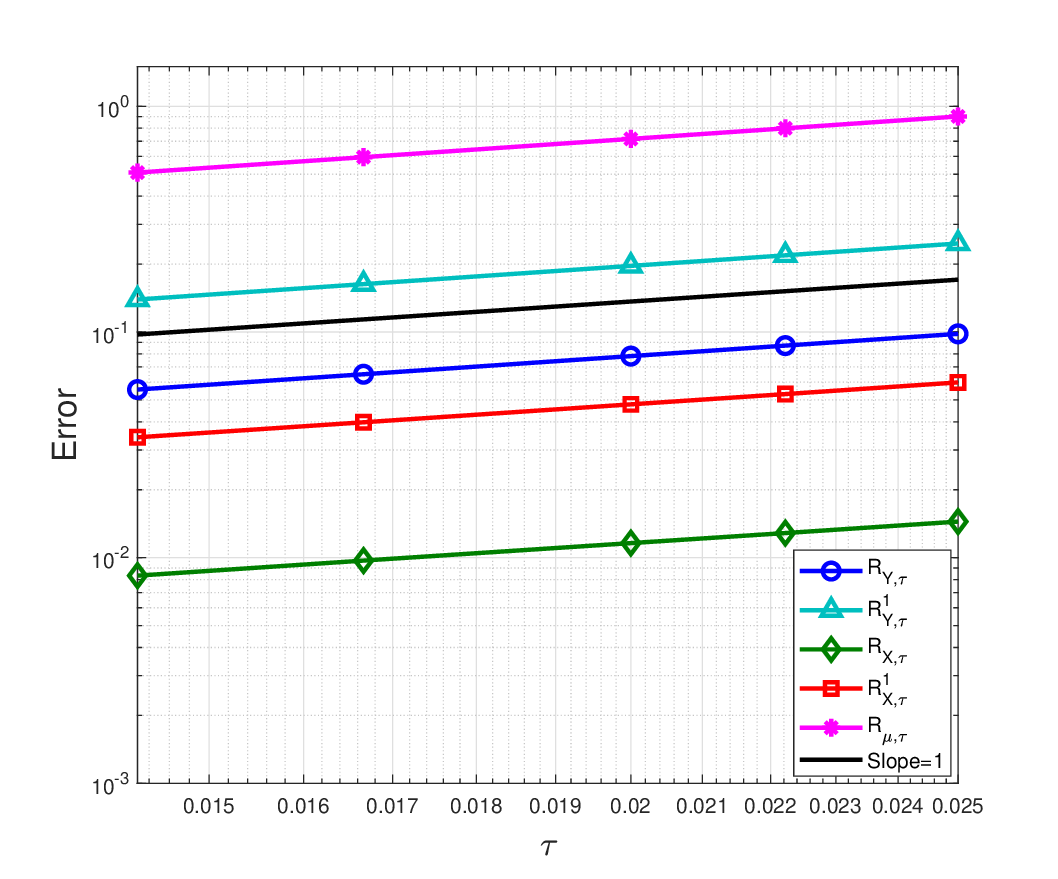}
\hspace{-5mm}
\caption{The profiles of numerical control (Left), expected numerical state (Middle) with ${\tau}={h}=\frac{1}{40}$ and convergence rates (Right) with ${\tau}={h}=\frac{1}{40},\frac{1}{45},\frac{1}{50},\frac{1}{60},\frac{1}{70}$ of Example \ref{exm1}.}
\label{figure1}
\end{figure}

\begin{figure}[!htbp]
\flushleft
\label{2a}
\includegraphics[width=3.5cm,height=3cm]{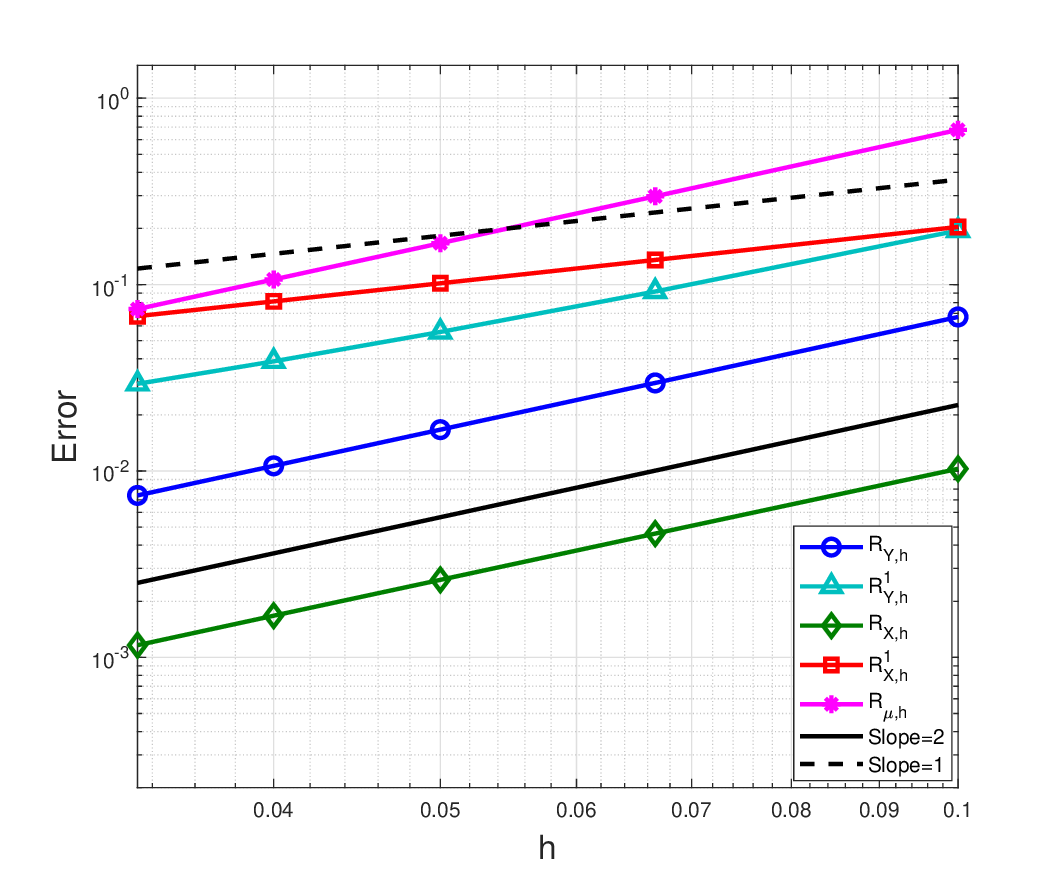}
\hspace{-5mm}
\label{2b}
\includegraphics[width=3.5cm,height=3cm]{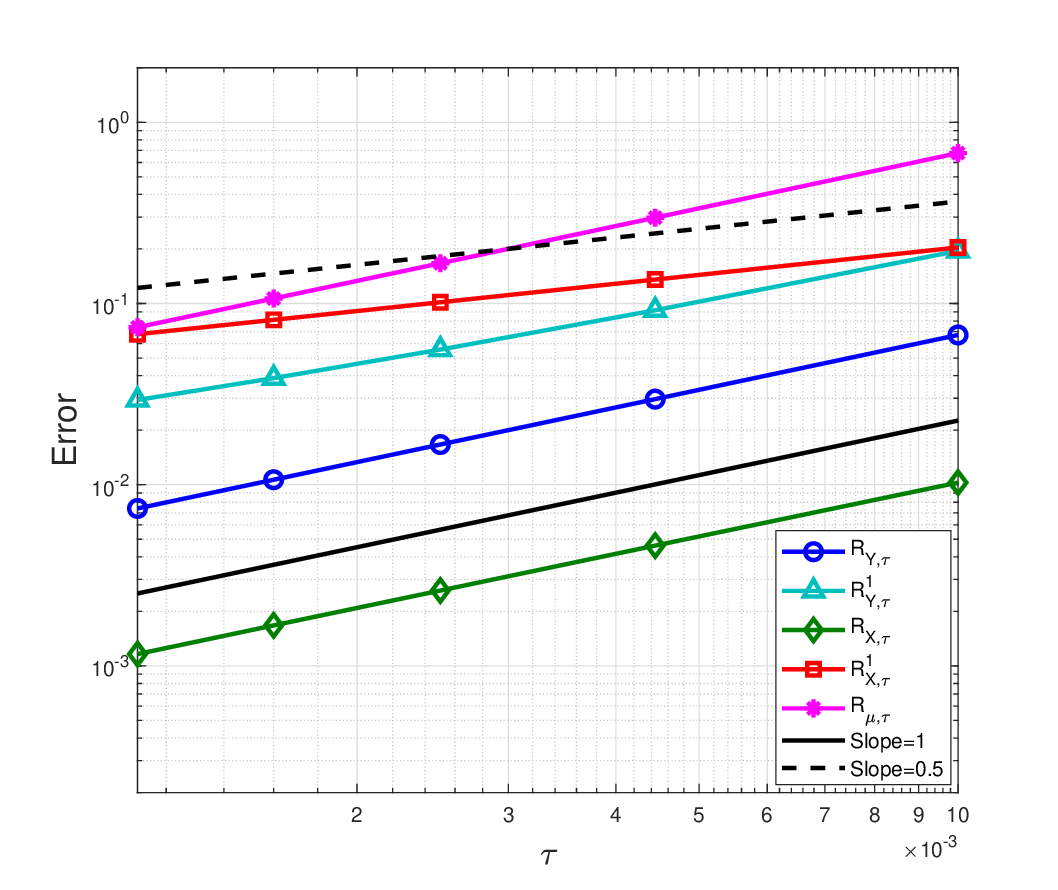}
\hspace{-5mm}
\label{2c}
\includegraphics[width=3.5cm,height=3cm]{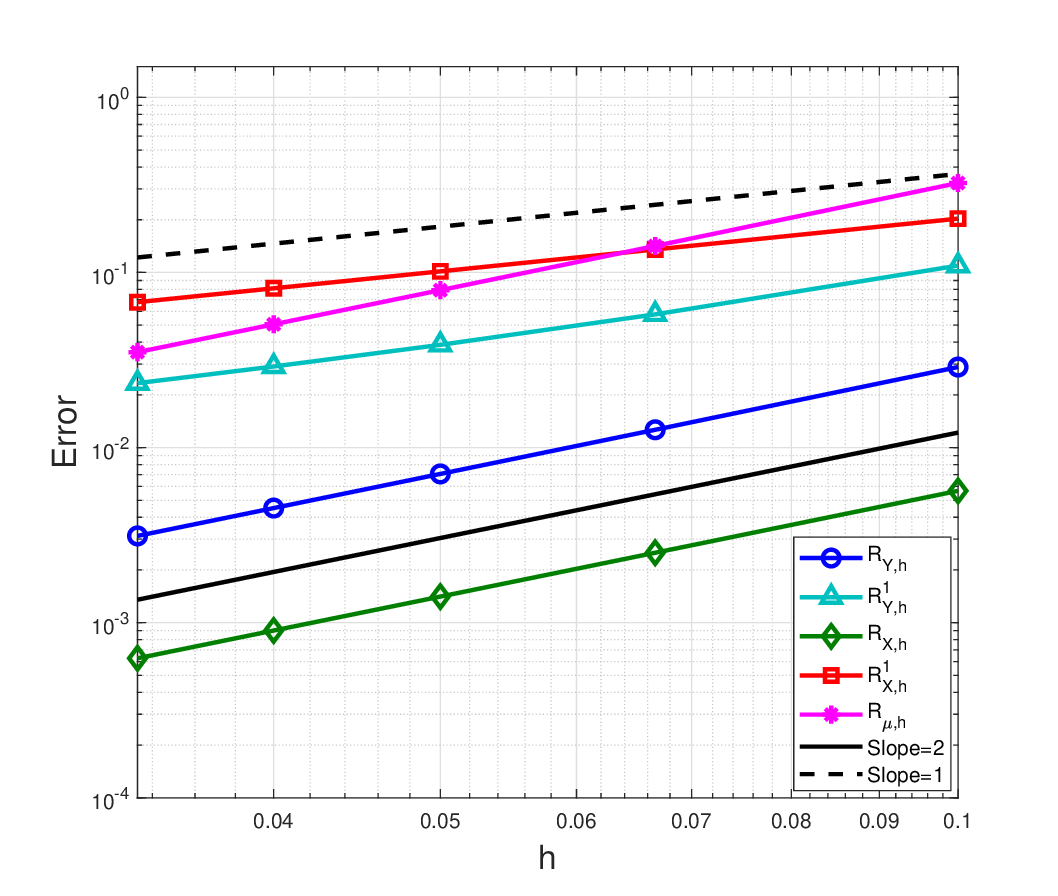}
\hspace{-5mm}
\label{2d}
\includegraphics[width=3.5cm,height=3cm]{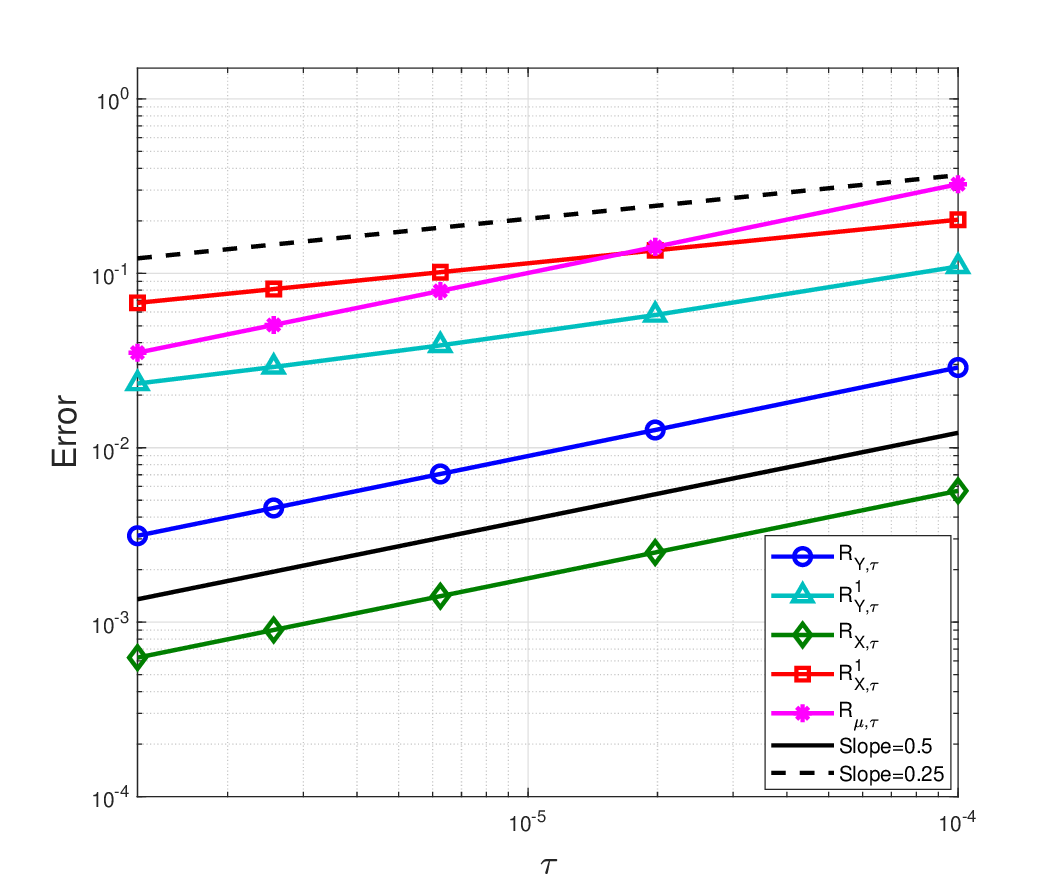}
\hspace{-5mm}
\caption{The convergence rates with ${h}=\frac{1}{10},\frac{1}{15},\frac{1}{20},\frac{1}{25},\frac{1}{30}$, ${\tau}={h^2}$ (Left, Center-Left) and ${\tau}={h^4}$ (Center-Right, Right) of Example \ref{exm1}.}
\label{figure2}
\end{figure}

\begin{figure}[!htbp]
\flushleft
\label{3a}
\includegraphics[width=3.5cm,height=3.5cm]{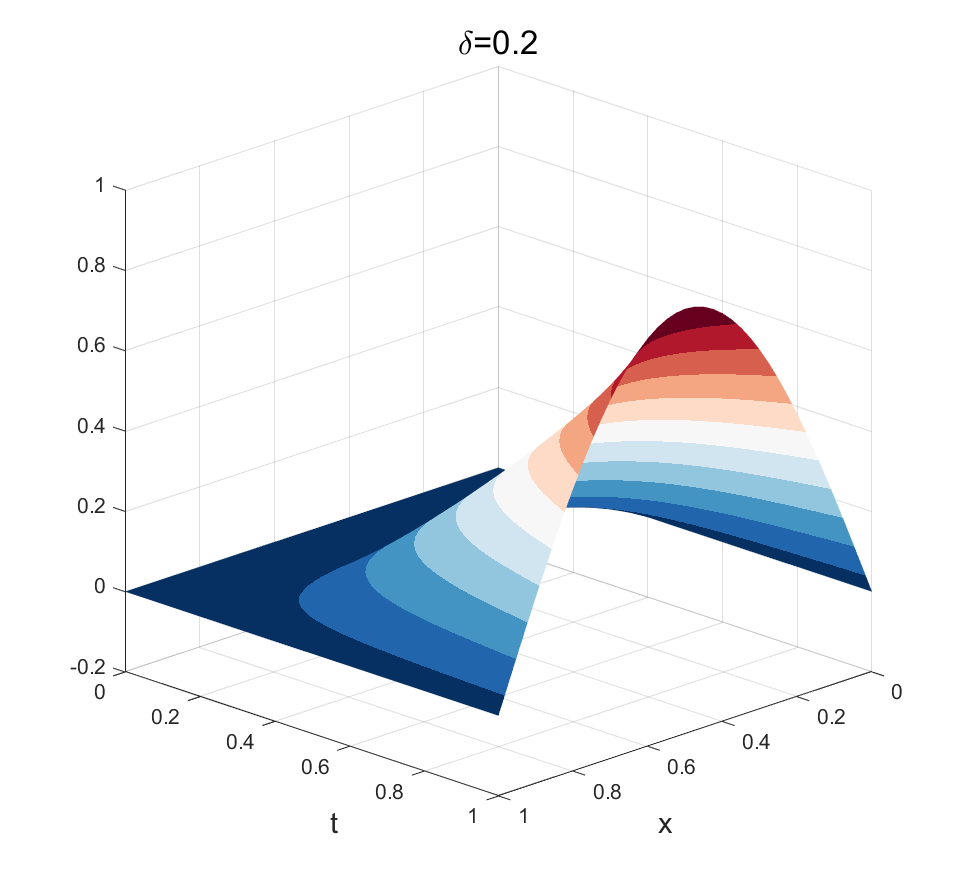}
\hspace{-5mm}
\label{3b}
\includegraphics[width=3.5cm,height=3.5cm]{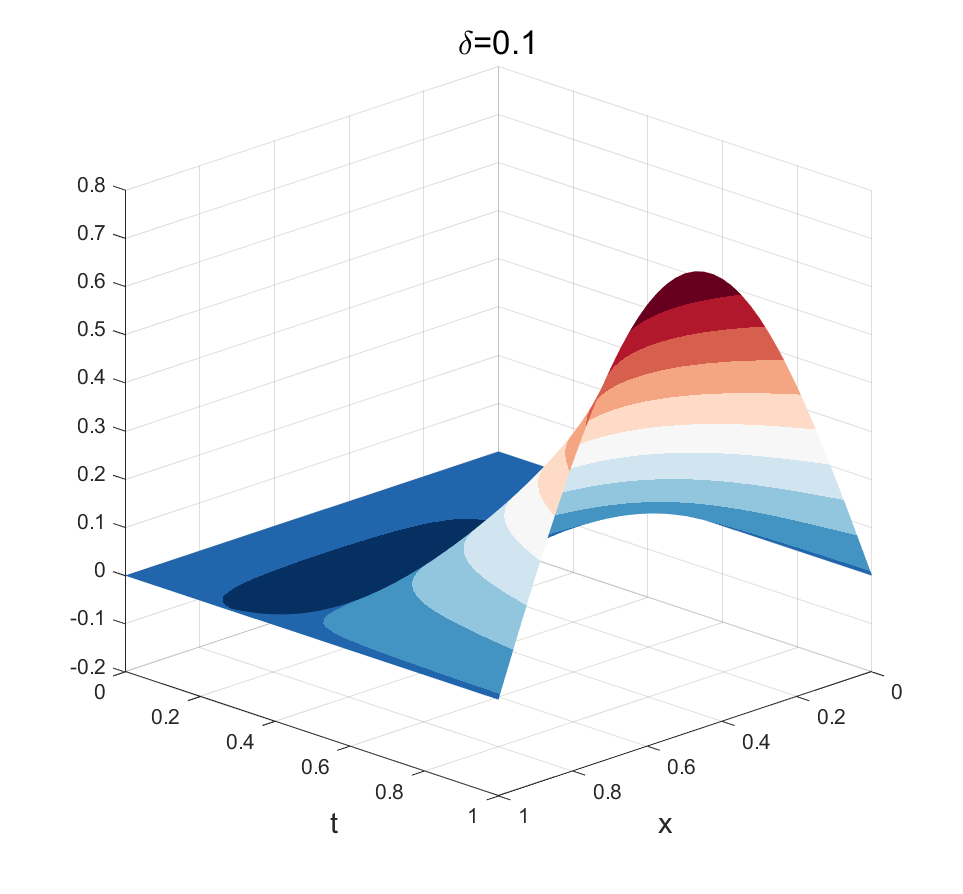}
\hspace{-5mm}
\label{3c}
\includegraphics[width=3.5cm,height=3.5cm]{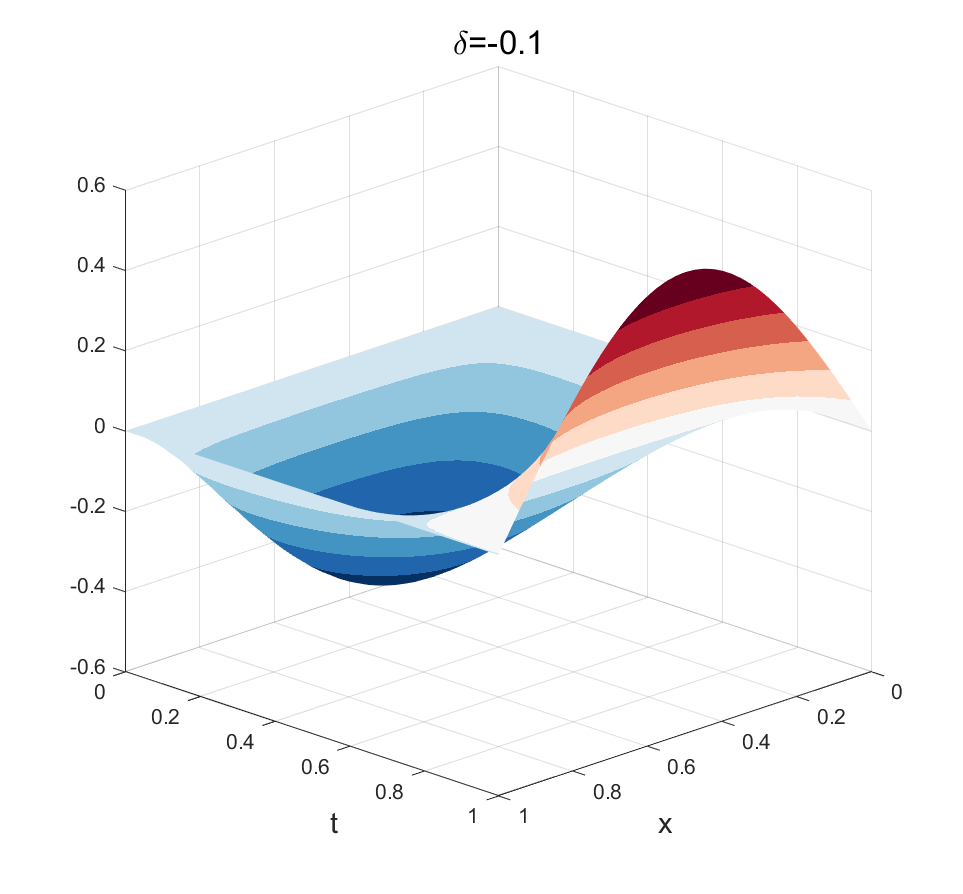}
\hspace{-5mm}
\label{3d}
\includegraphics[width=3.5cm,height=3.5cm]{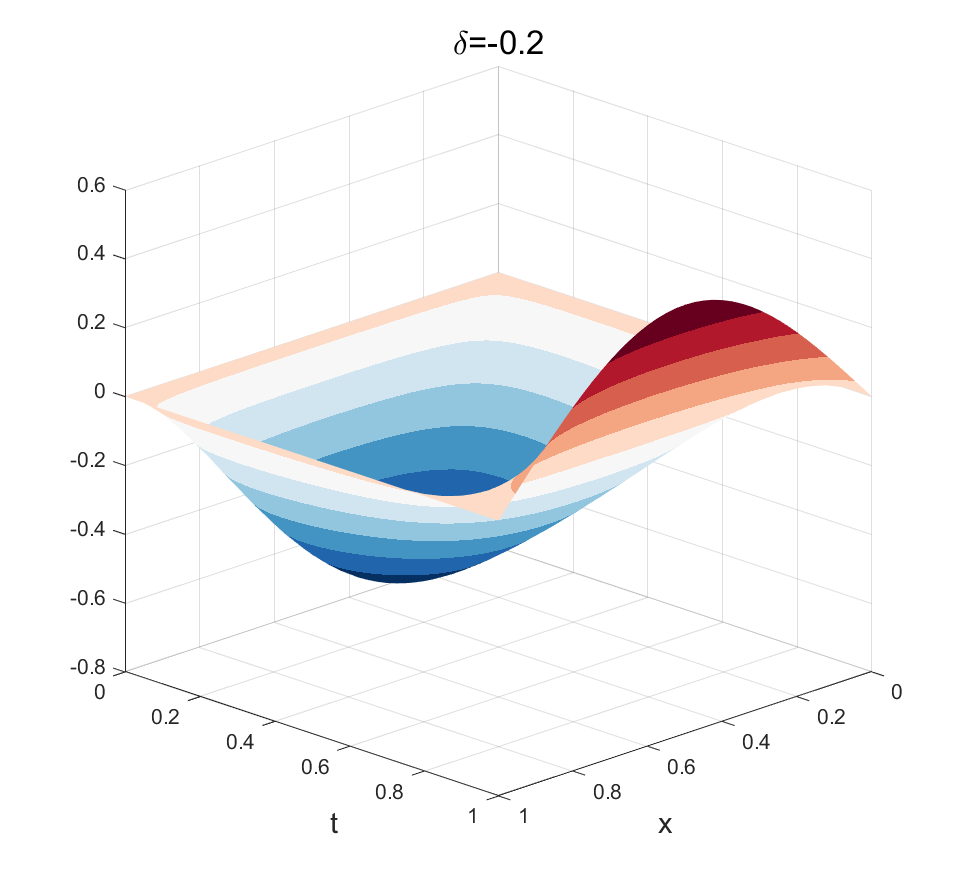}
\hspace{-5mm}
\caption{The profiles of expected numerical states with ${\tau}={h}=\frac{1}{40}$ and $\delta=0.2,0.1,-0.1,-0.2$ of Example \ref{exm1}.}
\label{figure3}
\end{figure}

\begin{table}[h]
\caption{The integral values of numerical states with $\delta = 0.2, 0.1, -0.1, -0.2$ of Example \ref{exm1}.}\label{table1}
\begin{tabular}{@{}cccccc@{}}
\toprule
${h} = {\tau}$  &$\frac{1}{40}$  &$\frac{1}{45}$ &$\frac{1}{50}$ &$\frac{1}{60}$ &$\frac{1}{70}$          \\
\midrule
$\delta = 0.2$        & 1.99913E-1  & 1.99920E-1  & 1.99926E-1  & 1.99934E-1  & 1.99941E-1  \\
$\delta = 0.1$        & 9.98696E-2  & 9.98800E-2  & 9.98887E-2  & 9.99020E-2  & 9.99120E-2  \\
$\delta = -0.1$       & -1.00201E-1 & -1.00185E-1 & -1.00171E-1 & -1.00151E-1 & -1.00135E-1 \\
$\delta = -0.2$       & -2.00232E-1 & -2.00213E-1 & -2.00198E-1 & -2.00174E-1 & -2.00156E-1 \\
\bottomrule
\end{tabular}
\end{table}

\begin{example}\label{exm2}
The objective functional in Example \ref{exm1} is considered, which is subject to the following stochastic parabolic equation with two-dimensional space domain $(x_1,x_2)\in \mathcal{D}=[0,1]\times[0,1]$:
\begin{equation*}
\left\{\begin{aligned}
dX(t)&=\left[\gamma\Delta X(t)+f(t)+U(t)\right]dt+\beta\sin(\pi x)(1+t)^2dW,\ t\in(0,1],\\
X(0)&=X_0,
\end{aligned}\right.
\end{equation*}
where $\gamma$ and $\beta$ are two constants.
\end{example}
The exact solutions and functions $X_0,f(t),X_d(t)$ as well as constraint parameter $\delta$ are given as follows:
\begin{equation*}
\left\{\begin{aligned}
&U(t)=(T-t)(1+\lambda t)\sin(\pi x_1)\sin(\pi x_2)(1+t)^2,\\
&X_0=\sin(\pi x_1)\sin(\pi x_2),\\
&X(t)=(1+\lambda t+\beta W_t)\sin(\pi x_1)\sin(\pi x_2)(1+t)^2,\\
&f(t)=(1+t)^2\sin(\pi x_1)\sin(\pi x_2)\\
&\quad\quad\quad\times\left(2\gamma\pi^2(1+\lambda t+\beta W_t)+(t-T)(1+\lambda t)+\frac{2(1+\lambda t+\beta W_t)}{1+t}+\lambda\right),\\
&\delta=\int_{0}^{1}\int_{\mathcal{D}}\mathbb{E}[X(t)]dxdt=\frac{17\lambda+28}{3\pi^2},\\
&X_d(t)=(1+t)^2\sin(\pi x_1)\sin(\pi x_2)\\
&\quad\quad\quad\times\left((1+\lambda t+\beta W_t)\left(2\gamma\pi^2(T-t)+2+ \frac{2(t-T)}{1+t}\right)+\lambda(t-T)\right)+\mu,
\end{aligned}\right.
\end{equation*}
where $\lambda$ and $\mu$ are the chosen constant and multiplier. We set $\gamma=\lambda=0.2$, $\beta=0.5$, $\mu=0.8$ and the other parameters are given as in Example \ref{exm1}. The cases ${h}=\sqrt{2}\left(\frac{1}{40},\frac{1}{45},\frac{1}{50},\frac{1}{60},\frac{1}{70}\right)$, ${\tau}=\frac{h}{\sqrt{2}}$ and ${h}=\sqrt{2}\left(\frac{1}{10},\frac{1}{15},\frac{1}{20},\frac{1}{25},\frac{1}{30}\right)$, ${\tau}=\frac{h^2}{2}$ are performed, respectively.
The convergence rates of control, state and multiplier are given in Figure \ref{figure4}. The numerical results are similar to the previous example, which validates the accuracy of the theoretical analysis and the effectiveness of our algorithm. 
Again, the different constraint parameters $\delta=1,0.5,-0.5,-1$
are tested separately, and the integral values of the numerical states are reported in Table \ref{table2}, which can be found that our algorithm can reliably capture the solution that satisfies the constraint condition.

\begin{figure}[!htbp]
\flushleft
\label{4a}
\includegraphics[width=4.5cm,height=4cm]{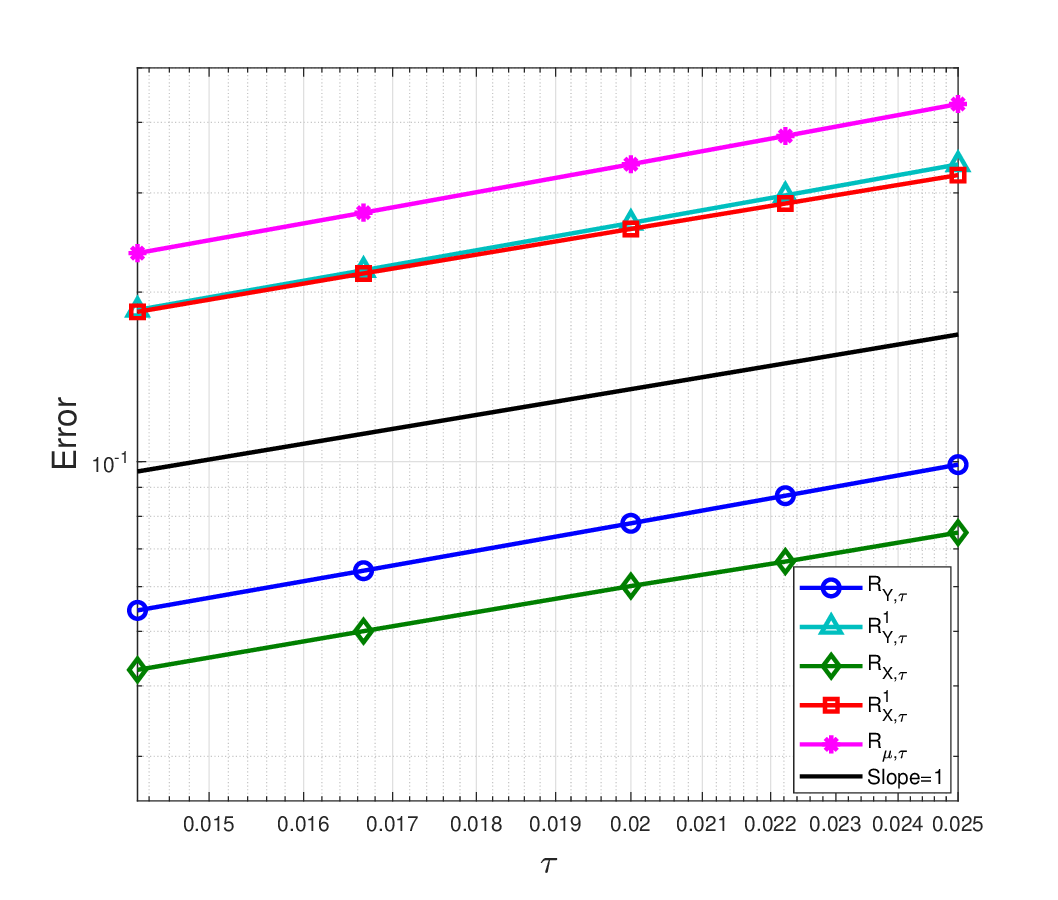}
\hspace{-5mm}
\label{4b}
\includegraphics[width=4.5cm,height=4cm]{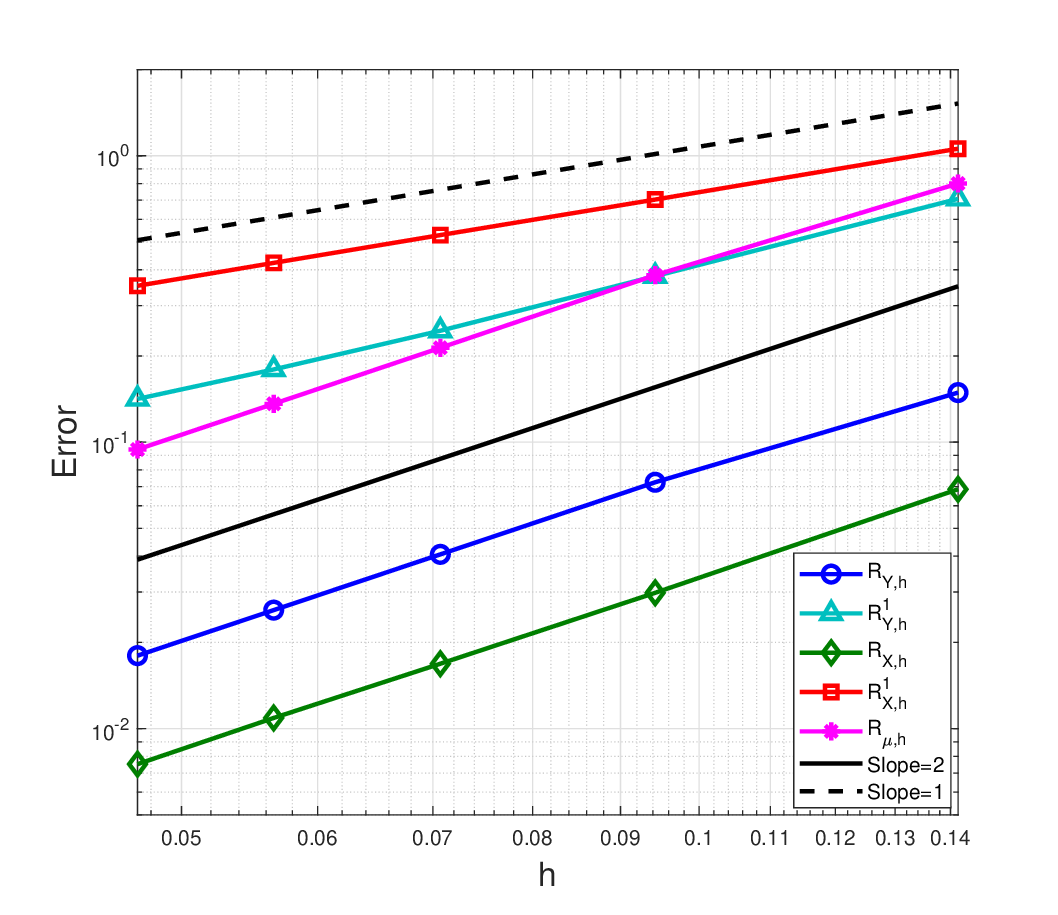}
\hspace{-5mm}
\label{4c}
\includegraphics[width=4.5cm,height=4cm]{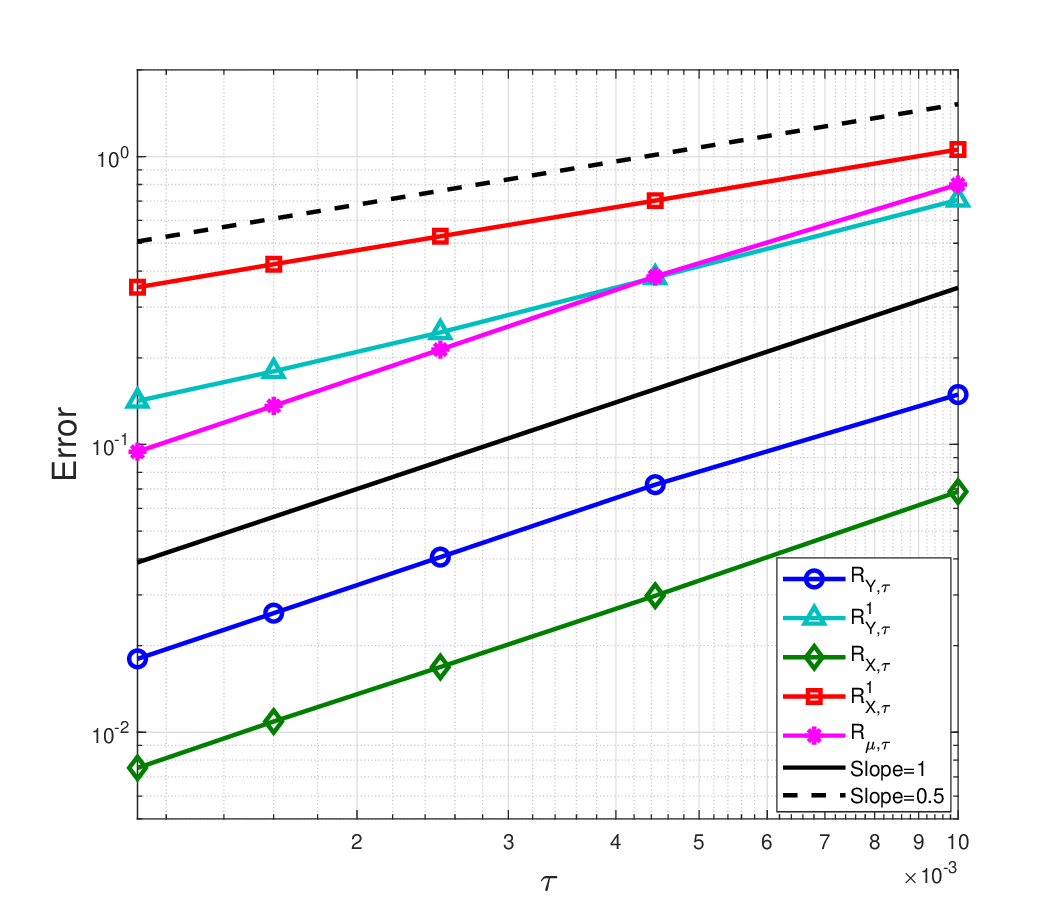}
\hspace{-5mm}
\caption{The convergence rates with ${h}=\sqrt{2}\left(\frac{1}{40},\frac{1}{45},\frac{1}{50},\frac{1}{60},\frac{1}{70}\right)$, ${\tau}=\frac{h}{\sqrt{2}}$ (Left) and ${h}=\sqrt{2}\left(\frac{1}{10},\frac{1}{15},\frac{1}{20},\frac{1}{25},\frac{1}{30}\right)$, ${\tau}=\frac{h^2}{2}$ (Middle, Right) of Example \ref{exm2}.}
\label{figure4}
\end{figure}

\begin{table}[h]
\caption{The integral values of numerical states with $\delta = 1, 0.5, -0.5, -1$ and $h = \sqrt{2}\tau$ of Example \ref{exm2}.}\label{table2}
\begin{tabular}{@{}cccccc@{}}
\toprule
${\tau}$ &$\frac{1}{40}$  &$\frac{1}{45}$ &$\frac{1}{50}$ &$\frac{1}{60}$ &$\frac{1}{70}$ \\
\midrule
$\delta = 1$     & 1.00000E-0  & 1.00000E-0  & 1.00000E-0  & 1.00000E-0  & 1.00000E-0  \\
$\delta = 0.5$   & 5.00000E-1  & 5.00000E-1  & 5.00000E-1  & 5.00000E-1  & 5.00000E-1  \\
$\delta = -0.5$  & -5.00000E-1 & -5.00000E-1 & -5.00000E-1 & -5.00000E-1 & -5.00000E-1 \\
$\delta = -1$    & -1.00000E-0 & -1.00000E-0 & -1.00000E-0 & -1.00000E-0 & -1.00000E-0 \\
\bottomrule
\end{tabular}
\end{table}

\section{Conclusion}
In this paper, the optimal strong error estimates of stochastic parabolic optimal control problem with integral state constraint and additive noise are derived. The SOCP is first discretized based on the time implicit discretization and piecewise linear finite element discretization in space, and the discrete first-order optimality condition is deduced by constructing the Lagrangian functional. 
Then the optimal strong convergence orders of fully discrete forward-backward stochastic parabolic equations are recovered, which is introduced as an auxiliary problem leading to the optimal a prior error estimates of control, state, adjoint state and multiplier. Further, In order to solve discrete SOCP with integral state constraint, an efficient gradient projection algorithm is proposed, the key idea of which is to ensure the state constraint by selecting specific multiplier in each iteration, and a convergence analysis of algorithm is given. Numerical example with one-dimensional and two-dimensional space domains are performed to validates the accuracy of the theoretical analysis and the effectiveness of our algorithm. 

According to \cite{point_state1,point_state2} the integral constraint can be used as a regularization method
to deal with the optimal control problem with pointwise state constraints, and
in our future work we will investigate the strong error estimates of stochastic optimal control problem with expected pointwise state constraint.

\bmhead{Acknowledgements}
The work of the authors was supported by the National Natural Science Foundation of China (12171042,11971259).


\section*{Declarations}
The authors declare that they have no conflict of interest.



\end{document}